\documentclass[11pt, draft]{amsart}
\usepackage{amssymb, amstext, amscd, amsmath, amssymb}
\usepackage{mathtools, xypic, paralist, color, dsfont, rotating}
\usepackage{verbatim}
\usepackage{enumerate,enumitem}
\usepackage{float}
\usepackage{setspace}
\usepackage{pifont}
\usepackage{tikz-cd}

\usepackage{marginnote}
\usepackage[a4paper]{geometry}
\floatstyle{boxed} 
\restylefloat{figure}

\numberwithin{equation}{section}
\let\OLDthebibliography\thebibliography
\renewcommand\thebibliography[1]{
  \OLDthebibliography{#1}
  \setlength{\parskip}{0pt}
  \setlength{\itemsep}{2pt plus 0.5ex}
}

\makeatletter
\def\@cite#1#2{{\m@th\upshape\bfseries%
[{#1\if@tempswa{\m@th\upshape\mdseries, #2}\fi}]}}
\makeatother

\theoremstyle{plain}
\newtheorem{theorem}{Theorem}[subsection]
\newtheorem{corollary}[theorem]{Corollary}
\newtheorem{proposition}[theorem]{Proposition}
\newtheorem{lemma}[theorem]{Lemma}

\theoremstyle{definition}
\newtheorem{definition}[theorem]{Definition}

\newtheorem{remark}[theorem]{Remark}

\theoremstyle{remark}

\mathtoolsset{centercolon}

\newcommand{\B}{{\mathcal{B}}}

\newcommand{\I}{{\mathcal{I}}}
\newcommand{\J}{{\mathcal{J}}}
\newcommand{\K}{{\mathcal{K}}}
\renewcommand{\L}{{\mathcal{L}}}
\newcommand{\M}{{\mathcal{M}}}
\newcommand{\N}{{\mathcal{N}}}
\renewcommand{\O}{{\mathcal{O}}}

\newcommand{\T}{{\mathcal{T}}}

\def\al{\alpha}

\def\ga{\gamma}

\def\de{\delta}
\def\ze{\zeta}

\def\la{\lambda}
\def\La{\Lambda}

\newcommand{\bC}{\mathbb{C}}

\newcommand{\bI}{\mathbb{I}}

\newcommand{\bN}{\mathbb{N}}
\newcommand{\bT}{\mathbb{T}}
\newcommand{\bZ}{\mathbb{Z}}

\newcommand{\fJ}{{\mathfrak{J}}}

\newcommand{\foral}{\text{ for all }}
\newcommand{\qand}{\quad\text{and}\quad}

\newcommand{\ca}{\mathrm{C}^*}

\newcommand{\ol}{\overline}

\newcommand{\End}{\operatorname{End}}

\newcommand{\id}{{\operatorname{id}}}

\newcommand{\inv}{{\operatorname{inv}}}

\newcommand{\mt}{\emptyset}

\newcommand{\spn}{\operatorname{span}}

\newcommand{\supp}{\operatorname{supp}}

\newcommand{\sca}[1]{\left\langle#1\right\rangle} 
\newcommand{\nor}[1]{\left\Vert #1\right\Vert} 
\newcommand{\un}[1]{{\underline{#1}}} 

\addtocontents{toc}{\protect\setcounter{tocdepth}{1}}

\begin{document}

\title[Gauge-invariant ideal structure]{Gauge-invariant ideal structure of C*-algebras associated with proper product systems over $\bZ_+^d$}


\author[J.A. Dessi]{Joseph A. Dessi}
\email{joseph.dessi1@gmail.com}

\thanks{2010 {\it  Mathematics Subject Classification.} 46L08, 47L55, 46L05}

\thanks{{\it Key words and phrases:} Product systems, Toeplitz-Nica-Pimsner algebras, gauge-invariant ideals.}

\begin{abstract}
We show that the gauge-invariant ideal parametrisation results of the author and Kakariadis are in agreement with those of Bilich in the case of a proper product system over $\bZ_+^d$.
This is accomplished in two ways: first via the use of Nica-covariant representations and Gauge-Invariant Uniqueness Theorems (the indirect route), and second via the definitions of the parametrising objects alone (the direct route).
We then apply our findings to simplify the main parametrisation result of the author and Kakariadis in the proper case, thereby fully describing the gauge-invariant ideal structure of each equivariant quotient of the Toeplitz-Nica-Pimsner algebra.
We close by providing applications in the contexts of C*-dynamical systems and row-finite higher-rank graphs.
\end{abstract}

\maketitle

\tableofcontents

\section{Introduction}\label{S:intro}

\subsection{Background}\label{Ss:background}

A prominent feature of the theory of operator algebras is the quantisation procedure by which a geometric/topological object can be studied via bounded linear operators on a Hilbert space.
The goal is to associate such an object with a C*-algebra in a rigid way, such that properties of the original structure are reflected by properties of the C*-algebra (and vice versa). 
In this way, the powerful and well-developed theory of C*-algebras can be brought to bear on the study of other mathematical structures.
In recent years, there has been interest in encoding this procedure in a uniform way, i.e., accounting for a multitude of examples via a single framework. 

A contemporary tool in this endeavour is that of product systems, whose associated C*-algebras account for a vast array of C*-constructions associated with a unital subsemigroup $P$ of a discrete group $G$.
Structures encompassed by this language include (but are not limited to) C*-dynamical systems, higher-rank graphs and subshifts. 
A pertinent feature of product systems is their ability to encode transformations that may not be reversible, and as such the associated C*-algebras provide an ample source of examples and counterexamples.
In turn, there is motivation to analyse the structure of these C*-algebras, and interpret the results with respect to the applications that the product system construction affords.
Much progress has been made in this direction in the case of $P=\bZ_+$; however, the situation changes when we consider more general semigroups.
There are many open questions even in the case of $P=\bZ_+^d$.

The case of $P=\bZ_+$ is the case of a single C*-correspondence $X$, the study of whose C*-algebras was contextualised by Pimsner \cite{Pim95}.
The quantisation is implemented via a Fock space construction, in which the elements of $X$ are treated as left creation operators.
These operators, together with the coefficient algebra of $X$ (viewed as a family of operators itself), give rise to the Toeplitz-Pimsner algebra $\T_X$.
Of particular interest is a specific equivariant quotient (i.e., a quotient by a gauge-invariant ideal): the Cuntz-Pimsner algebra $\O_X$.
The latter is the minimal C*-algebra that contains an isometric copy of $X$, and it is this boundary behaviour that allows for the recovery of numerous (rank-one) C*-constructions. 
The C*-crossed product induced by a single $*$-automorphism and the Cuntz-Krieger algebra associated with a row-finite directed graph, for example, are both incarnations of the C*-algebra $\O_X$.

In light of the array of applications, C*-algebras associated with C*-correspondences have been explored in detail.
Important developments in this direction include the study of ideal structure and simplicity \cite{CKO19}, K-theory computation \cite{Kat04} and classification \cite{BTZ18}, necessary and sufficient conditions for nuclearity and exactness \cite{Kat04}, the decomposition and parametrisation of the KMS-simplex \cite{Kak20,LN04} and, of particular importance to the current work, the parametrisation of gauge-invariant ideals \cite{Kat07}.
Focusing on the latter, the parametrisation is implemented by pairs of ideals of the coefficient algebra satisfying conditions related to the underlying C*-correspondence.
If the C*-correspondence is induced by a geometric/topological object, then this description can be translated directly in terms of properties of the inducing object.
For example, the gauge-invariant ideals of the Cuntz-Krieger algebra of a row-finite directed graph are in bijection with the hereditary saturated vertex sets of the graph, in accordance with \cite{BPRS00}.

Moving beyond $\bZ_+$, many of the aforementioned results do not have clear extensions to the general case.
However, by imposing additional structure on the product system $X$, progress can be made.
One such addition is compact alignment for product systems over quasi-lattices, as pioneered by Fowler \cite{Fow02}.
We can also ask that the representations of $X$ preserve compact alignment, leading to the notion of Nica-covariant representations.
In this case the associated C*-algebras admit a Wick ordering due to the Nica-covariant relations of the Fock representation, allowing for a tractable analysis via cores.
The KMS-simplex of the Fock C*-algebra and particularly KMS-states of finite type have been studied by Afsar, Larsen and Neshveyev \cite{ALN20}, unifying multiple works (see also \cite{Ch20} for the case of higher-rank graphs and \cite{Kak20-2} for finite-rank product systems).
We can still make sense of compact alignment when extending to product systems over right LCM semigroups, and a thorough study of the associated C*-algebras was provided by Kwa\'sniewski and Larsen \cite{KL19a, KL19b}.
A key difference compared to the rank-one case is that the Fock C*-algebra is not universal for all representations, in general.
However, we do have that the Fock C*-algebra is universal for all Nica-covariant representations when $X$ is compactly aligned over a unital right LCM subsemigroup of an amenable discrete group (note that $P=\bZ_+^d$ resides within this framework).
In their recent work, Brix, Carlsen and Sims \cite{BCS23} explore the ideal structure of C*-algebras related to commuting local homeomorphisms, pushing the theory beyond simplicity.

Until recently, the problem of ascertaining the appropriate Cuntz-type object for product systems has been open.
Work in this direction commenced with the results of Fowler \cite{Fow02}. 
Sims and Yeend \cite{SY10} provided an answer in the case of compactly aligned product systems over quasi-lattices, and showed that this C*-algebra (referred to as the Cuntz-Nica-Pimsner algebra) accounts for numerous examples.
Co-universality of the Cuntz-Nica-Pimsner algebra (under an appropriate amenability assumption) was clarified by Carlsen, Larsen, Sims and Vittadello \cite{CLSV11}.
The appropriate Cuntz-type object for compactly aligned product systems over right LCM semigroups was identified as the C*-envelope of the (nonselfadjoint) tensor algebra (equipped with the natural coaction) by Dor-On, Kakariadis, Katsoulis, Laca and Li \cite{DKKLL20}.
Nuclearity and exactness was addressed by Kakariadis, Katsoulis, Laca and Li \cite{KKLL21b}.
The complete picture was provided in the general case by Sehnem \cite{Seh18, Seh21} via strong covariance relations, linking the Cuntz-type object with the C*-envelope of the tensor algebra.

The preceding results fall into the broader programme of bringing C*-algebras of product systems into the remit of Elliott's Classification Programme.
A key result in this direction for $P=\bZ_+$ has been provided by Brown, Tikuisis and Zelenberg \cite{BTZ18}, wherein a sufficient condition for classifiability of the Cuntz-Pimsner algebra in terms of properties of the C*-correspondence and its coefficient algebra is provided.
A corresponding result for the Cuntz-Nica-Pimsner algebra in higher-rank cases has not yet been achieved.
Indeed, one of the key advantages of the rank-one case is that the strong covariance relations defining the Cuntz-Pimsner algebra are simple and algebraic, induced by a single ideal of the coefficient algebra introduced by Katsura \cite{Kat03}.
In the general case the picture is significantly more complicated, since the strong covariance relations may not adopt the simple algebraic format of the rank-one case.
For example, the relations defining the Cuntz-Nica-Pimsner algebra of Sims and Yeend \cite{SY10} are based on families of compact operators induced by all possible finite subsets of the underlying semigroup.

\subsection{Motivation}\label{Ss:mot}

Let $X$ be a compactly aligned product system over the semigroup $P=\bZ_+^d$ that additionally satisfies the strong compact alignment condition of \cite[Definition 2.2]{DK18}.
This condition, introduced by Dor-On and Kakariadis \cite{DK18}, is advantageous because it ensures that the strong covariance relations defining the Cuntz-Nica-Pimsner algebra are simple and algebraic in format and are induced by a family of $2^d-1$ ideals of the coefficient algebra (or $2^d$ ideals if we count the trivial relations induced by the zero ideal).
This picture is in analogy with the rank-one case, opening a direction for lifting results from this setting.

Strong compact alignment proved to be the linchpin that enabled a parametrisation of the gauge-invariant ideals of the Toeplitz-Nica-Pimsner algebra $\N\T_X$ (i.e., the universal C*-algebra for the Nica-covariant representations of $X$) via certain $2^d$-tuples of ideals of the coefficient algebra, as established by the author and Kakariadis \cite[Theorem 4.2.3]{DeK24}.
The introduction and study of these $2^d$-tuples, termed \emph{NT-$2^d$-tuples} \cite[Definition 4.1.4]{DeK24}, is the focus of the aforementioned work.
In particular, describing NT-$2^d$-tuples via product system operations alone is a key point of attention.
In \cite{DeK24} we also prove a Gauge-Invariant Uniqueness Theorem (with another obtained as a subcase) \cite[Theorems 3.2.11 and 3.4.9]{DeK24}; we parametrise the gauge-invariant ideals of every equivariant quotient of $\N\T_X$ (e.g., the Cuntz-Nica-Pimsner algebra $\N\O_X$) \cite[Theorem 4.2.11]{DeK24}; we identify the lattice operations rendering the parametrisations lattice isomorphisms \cite[Propositions 4.2.6, 4.2.7 and 4.2.10]{DeK24}, and we interpret the parametrising objects in the contexts of regular product systems \cite[Corollary 5.2.3]{DeK24}, C*-dynamical systems \cite[Corollary 5.3.5]{DeK24}, higher-rank graphs \cite[Corollary 5.4.14]{DeK24}, and product systems over $\bZ_+^d$ in which each fibre (except the coefficient algebra) admits a finite frame \cite[Corollary 5.5.23]{DeK24}. 

Strong compactly aligned product systems include as a subclass the proper product systems over $\bZ_+^d$, i.e., those product systems over $\bZ_+^d$ whose left actions are by compact operators.
These product systems account for numerous important examples, including C*-dynamical systems and row-finite higher-rank graphs.
Fixing such a product system $X$, a parametrisation of the gauge-invariant ideals of $\N\T_X$ was provided by Bilich \cite[Theorem 4.15 (1)]{BB24}, contemporaneously with \cite{DeK24}.
This parametrisation is also implemented via certain $2^d$-tuples of ideals of the coefficient algebra, though they are defined differently to the NT-$2^d$-tuples.
More specifically, these $2^d$-tuples are termed \emph{T-families} \cite[Definition 4.2]{BB24}, and their introduction/study is a key aspect of the aforementioned work.
In \cite{BB24} Bilich also proves a Gauge-Invariant Uniqueness Theorem \cite[Corollary 4.14]{BB24}, parametrises the gauge-invariant ideals of $\N\O_X$ \cite[Theorem 4.15 (2)]{BB24}, and interprets the main results in the context of row-finite higher-rank graphs \cite[Theorem 5.5]{BB24}.

It is important to note that the methods employed in \cite{BB24,DeK24} differ substantially.
In brief, the former argues using product system extensions while the latter proceeds by analysing maximal families.
Nevertheless, the parametrisation results of \cite{BB24,DeK24} share key commonalities, including the format of the parametrising objects (namely $2^d$-tuples of the coefficient algebra satisfying certain properties) and the use of a Gauge-Invariant Uniqueness Theorem as an important part of the proofs.
As such, it is natural to ask the following: ``if we restrict to the setting of a proper product system $X$ over $\bZ_+^d$, are the NT-$2^d$-tuples of $X$ exactly the T-families of $X$?"
Providing an affirmative answer to this question, as well as illuminating the connections between \cite{BB24} and \cite{DeK24}, are the main points of motivation for the current work.

Equipped with this affirmative answer, we will use it to provide a complete and succinct description of the gauge-invariant ideal structure of every equivariant quotient of $\N\T_X$, simplifying \cite[Theorem 4.2.11]{DeK24} in the proper setting.
We will then interpret the modified parametrising objects in the contexts of C*-dynamical systems and row-finite higher-rank graphs, simplifying \cite[Corollary 5.3.5]{DeK24} and part of \cite[Corollary 5.4.14]{DeK24}, as well as demonstrating an alignment with \cite[Theorem 5.5]{BB24}.

\subsection{Description of main results}\label{Ss:resultsumm}

Let us fix notation (see in conjunction with the general notation that we adopt in Subsections \ref{Ss:notation} and \ref{Ss:prod sys}).
We write $[d] := \{1, \dots, d\}$ for $d \in \bN$.
We write $\un{n}$ for the elements of $\bZ_+^d$ and will denote its generators by $\un{i}$ for $i \in [d]$.
We write $\un{n} \perp F$ for $F \subseteq [d]$ if $\supp \un{n} \cap F = \mt$.
Moreover, we write $\un{1}_F:= \sum\{\un{i}\mid i\in F\}$ for $\mt \neq F \subseteq [d]$.

Throughout the subsection, we will take $X = \{X_{\un{n}}\mid\un{n}\in\bZ_+^d\}$ to be a product system over $\bZ_+^d$ with coefficients in a C*-algebra $A$ that is also \emph{proper}, i.e., we have that $\phi_\un{n}(A)\subseteq\K(X_\un{n})$ for all $\un{n}\in\bZ_+^d$.
We work at this level of generality for the majority of the discussion, with some excursions to the more general setting of strong compactly aligned product systems where appropriate.
Fixing $\un{n}\in\bZ_+^d, \mt\neq F\subseteq[d]$ and an ideal $I$ of $A$, we write
\[
X_{\un{n}}^{-1}(I) := \{a \in A \mid \sca{X_{\un{n}}, a X_{\un{n}}} \subseteq I \}\qand J_F(I,X):=\{a\in A\mid aX_F^{-1}(I)\subseteq I\},
\]
where $X_F^{-1}(I):=\bigcap\{X_\un{m}^{-1}(I)\mid\un{0}\neq\un{m}\leq\un{1}_F\}$.
A \emph{$2^d$-tuple (of $X$)} is a family $\L:=\{\L_F\}_{F\subseteq[d]}$ of $2^d$ non-empty subsets of $A$.

If $(\pi,t)$ is a Nica-covariant representation of $X$ that acts on a Hilbert space $H$, we write $\psi_\un{n}$ for the induced $*$-representation of $\K(X_\un{n})$ for each $\un{n}\in\bZ_+^d$.
For $i \in [d]$, we use an approximate unit $(k_{\un{i},\la})_{\la\in\La}$ of $\K(X_{\un{i}})$ to define the projection $p_{\un{i}}:=\textup{w*-}\lim_\la \psi_{\un{i}}(k_{\un{i},\la})$, and we set
\begin{equation*}
q_\mt:= I_H
\text{ and }
q_F:=\prod_{i\in F}(I_H-p_{\un{i}}) \textup{ for $\mt\neq F\subseteq [d]$}.
\end{equation*}
Fixing $a\in A$ and $\mt\neq F\subseteq[d]$, the key relation is that 
\[
\pi(a) q_F = \pi(a) + \sum \{ (-1)^{|\un{n}|} \psi_{\un{n}}(\phi_{\un{n}}(a)) \mid \un{0} \neq \un{n} \leq \un{1}_F\}
\]
and thus $\pi(a) q_F \in \ca(\pi,t)$, although it may not be that $q_F \in \ca(\pi,t)$.
We reserve $(\ol{\pi}_X, \ol{t}_X)$ for the universal Nica-covariant representation of $X$.
Due to the aforementioned relation, each $2^d$-tuple $\L$ of $X$ induces a canonical gauge-invariant ideal $\sca{\ol{\pi}_X(\L_F) \ol{q}_{X,F} \mid F \subseteq [d]}$ of $\N\T_X$.
We write $\N\O(\L,X)$ for the corresponding equivariant quotient.

The main result in \cite{DK18} is that $\N\O_X \cong \N\O(\I,X)$ for the family $\I := \{\I_F\}_{F \subseteq [d]}$, where
\[ 
\I_F := \bigcap\{X_\un{n}^{-1}(\J_F)\mid \un{n}\perp F\}
\quad \text{for} \quad
\J_F :=
(\bigcap_{i\in F}\ker\phi_{\un{i}})^\perp.
\]
We note that $\I_\mt = \J_\mt = \{0\}$.
Every $\I_F$ is $F^\perp$-invariant (in fact the largest $F^\perp$-invariant ideal of $\J_F$), and the family $\I$ is partially ordered in the sense that $\I_{F_1}\subseteq\I_{F_2}$ whenever $F_1\subseteq F_2\subseteq[d]$.
We can abstract these properties as follows. 
Given a $2^d$-tuple $\L$ of $X$, we say that $\L$ is \emph{$X$-invariant} if $\left[\sca{X_\un{n},\L_F X_\un{n}}\right]\subseteq\L_F$ for all $\un{n}\perp F$ and $F\subseteq[d]$.
We say that $\L$ is \emph{partially ordered} if $\L_{F_1}\subseteq\L_{F_2}$ whenever $F_1\subseteq F_2\subseteq[d]$.
If $\L$ consists of ideals, then to each $\mt\neq F\subsetneq[d]$ we may associate the following two ideals of $A$:
\[
\L_{\inv, F} := \bigcap_{\un{n}\perp F}X_\un{n}^{-1}(\cap_{F\subsetneq D}\L_D)
\qand
\L_{\lim, F} := \{a\in A \mid \lim_{\un{n}\perp F}\|\phi_\un{n}(a)+\K(X_\un{n}\L_F)\|=0\}.
\]

A $2^d$-tuple $\L$ of $X$ is said to be an \emph{NT-$2^d$-tuple (of $X$)} if it satisfies the following four conditions:
\begin{enumerate}
\item $\L$ consists of ideals and $\L_F\subseteq J_F(\L_\mt,X)$ for all $\mt\neq F\subseteq[d]$,
\item $\L$ is $X$-invariant,
\item $\L$ is partially ordered, 
\item for each $\mt\neq F\subsetneq[d]$, we have that
\[
\bigg(\bigcap_{\un{n}\perp F}X_\un{n}^{-1}(J_F(\L_\mt,X))\bigg)\cap\L_{\inv,F}\cap\L_{\lim,F}\subseteq\L_F.
\]
\end{enumerate}
The NT-$2^d$-tuples of $X$ parametrise the gauge-invariant ideals of $\N\T_X$ \cite[Theorem 4.2.3]{DeK24}.
With minor adjustments, we obtain a parametrisation of the gauge-invariant ideals of $\N\O(\K,X)$ for an arbitrary $2^d$-tuple $\K$ of $X$.
More precisely, we say that an NT-$2^d$-tuple $\L$ of $X$ is a \emph{$\K$-relative NO-$2^d$-tuple (of $X$)} if $\K_F\subseteq\L_F$ for all $F\subseteq[d]$.
Such families parametrise the gauge-invariant ideals of $\N\O(\K,X)$ \cite[Theorem 4.2.11]{DeK24}.
We refer to the $\I$-relative NO-$2^d$-tuples of $X$ simply as \emph{NO-$2^d$-tuples (of $X$)} and note that these families parametrise the gauge-invariant ideals of $\N\O_X$ \cite[Corollary 4.2.12]{DeK24}.

A $2^d$-tuple $\L$ of $X$ is said to be a \emph{T-family (of $X$)} if it consists of ideals and satisfies
\[
\L_F=X_\un{i}^{-1}(\L_F)\cap\L_{F\cup\{i\}}\foral F\subsetneq[d], i\in[d]\setminus F.
\]
We say that a T-family $\L$ of $X$ is an \emph{O-family (of $X$)} if $\I_F\subseteq\L_F$ for all $F\subseteq[d]$.
The T-families (resp. O-families) of $X$ parametrise the gauge-invariant ideals of $\N\T_X$ (resp. $\N\O_X$) \cite[Theorem 4.15]{BB24}.

Ascertaining the alignment of NT-$2^d$-tuples with T-families is the central focus of the current work (the alignment of NO-$2^d$-tuples with O-families then follows immediately).
In Section \ref{S:BBDeK} we establish this alignment via an indirect route, exploiting Nica-covariant representations and Gauge-Invariant Uniqueness Theorems.
In doing so, we clarify several connections between \cite{BB24} and \cite{DeK24}.
In Section \ref{S:NT=T} we employ a more direct approach, demonstrating the alignment between NT-$2^d$-tuples and T-families using the definitions alone.
A strength of this methodology lies in the fact that it requires a minimal amount of background knowledge in product system theory.
This leads to our first main result.

\medskip

\noindent
{\bf Theorem A.}\emph{(Theorem \ref{T:NT=T})} Let $X$ be a proper product system over $\bZ_+^d$ with coefficients in a C*-algebra $A$.
Then the following hold:
\begin{enumerate}
\item the NT-$2^d$-tuples of $X$ are exactly the T-families of $X$, and
\item the NO-$2^d$-tuples of $X$ are exactly the O-families of $X$.
\end{enumerate}

\medskip

In Section \ref{S:app} we present several applications of Theorem A.
We commence by simplifying \cite[Theorem 4.2.11]{DeK24} in the proper case.
To this end, we argue that the $\K$-relative NO-$2^d$-tuples of $X$ are exactly the T-families $\L$ of $X$ that satisfy $\K_F\subseteq\L_F$ for all $F\subseteq[d]$.
We refer to such families as \emph{$\K$-relative O-families (of $X$)} and arrive at our next main result.

\medskip

\noindent
{\bf Theorem B.} \emph{(Theorem \ref{T:NOparamprop})} Let $X$ be a proper product system over $\bZ_+^d$ with coefficients in a C*-algebra $A$ and let $\K$ be a $2^d$-tuple of $X$.
Then there exists an order-preserving bijection between the set of $\K$-relative O-families of $X$ and the set of gauge-invariant ideals of $\N\O(\K,X)$.

\medskip

It should be noted that the lattice operations on the set of $\K$-relative O-families that bolster the bijection of Theorem B to a lattice isomorphism are clarified in \cite[Propositions 4.2.6, 4.2.7 and 4.2.10]{DeK24}.
Next we interpret the $\K$-relative O-families in the context of C*-dynamical systems, thereby simplifying \cite[Corollary 5.3.5]{DeK24}. 
Given a C*-dynamical system $(A,\al,\bZ_+^d)$, we write $X_\al$ for the associated proper product system.

\medskip

\noindent
{\bf Corollary C.} \emph{(Corollary \ref{C:dynsysinterp})} Let $(A,\al,\bZ_+^d)$ be a C*-dynamical system and let $\K$ and $\L$ be $2^d$-tuples of $X_\al$.
Then $\L$ is a $\K$-relative O-family of $X_\al$ if and only if the following three conditions hold:
\begin{enumerate}
\item $\L$ consists of ideals,
\item $\L_F=\al_\un{i}^{-1}(\L_F)\cap\L_{F\cup\{i\}}\foral F\subsetneq[d] \; \text{and} \; i\in[d]\setminus F$, and
\item $\K\subseteq\L$.
\end{enumerate}

\medskip

Finally, we interpret the $\K$-relative O-families in the context of row-finite higher-rank graphs, thereby simplifying the row-finite case of \cite[Corollary 5.4.14]{DeK24} and recovering the first part of \cite[Theorem 5.5]{BB24}.
Given a row-finite $k$-graph $(\La,d)$, we write $X(\La)$ for the associated proper product system.
Given a $2^d$-tuple $\L$ of ideals of $c_0(\La^\un{0})$, we write $H_\L$ for the associated family of vertex sets.

\medskip

\noindent
{\bf Corollary D.} \emph{(Corollary \ref{C:kgraphinterp})} Let $(\La,d)$ be a row-finite $k$-graph.
Let $\K$ and $\L$ be $2^k$-tuples of $X(\La)$ and suppose that $\K$ consists of ideals.
Then $\L$ is a $\K$-relative O-family of $X(\La)$ if and only if the following three conditions hold:
\begin{enumerate}
\item $\L$ consists of ideals,
\item $H_{\L,F}=\{v\in\La^\un{0}\mid s(v\La^\un{i})\subseteq H_{\L,F}\}\cap H_{\L,F\cup\{i\}}$ for all $F\subsetneq[k] \; \text{and} \; i\in[k]\setminus F$, and
\item $H_{\K,F}\subseteq H_{\L,F}$ for all $F\subseteq[k]$.
\end{enumerate}


\subsection{Contents of sections}\label{Ss:secsum}

In Section \ref{S:prod sys} we provide an exposition on the aspects of C*-correspondence and product system theory that we will need.
Upon collecting the requisite results concerning C*-correspondences, we present Katsura's parametrisation of gauge-invariant ideals \cite{Kat07} for comparison with the main results of \cite{BB24, DeK24}.
We then move on to consider product systems over $\bZ_+^d$.
We pay particular attention to the strong compactly aligned product systems and present the key results of \cite{DK18} that follow from the strong compact alignment condition.
We also define the main C*-algebras of interest, namely $\N\T_X$ and $\N\O_X$.
Finally, we outline the quotient product system construction, which is needed at several points in the sequel.

In Section \ref{S:BBDeK} we present the more specialised tools that are employed in \cite{BB24, DeK24}.
We start by working with respect to an arbitrary strong compactly aligned product system.
We define the relative Cuntz-Nica-Pimsner algebras $\N\O(\K,X)$ and an important ideal family arising from each Nica-covariant representation.
Building on this, we present a Gauge-Invariant Uniqueness Theorem for certain ``maximal" $2^d$-tuples.
We also define the crucial notions of invariance and partial ordering for $2^d$-tuples of non-empty subsets of the coefficient algebra, as well as the $\L^{(1)}$ construction- a key step in \cite{DeK24}.
We then define NT-$2^d$-tuples, NO-$2^d$-tuples and $\K$-relative NO-$2^d$-tuples, and give the main result of \cite{DeK24}.
Next, we restrict to the setting of proper product systems and introduce the key concepts/results of \cite{BB24}.
In particular, we define T-families and O-families, present a Gauge-Invariant Uniqueness Theorem for T-families, and give the main result of \cite{BB24}.
We then demonstrate the alignment of NT-$2^d$-tuples with T-families by taking a detour via Nica-covariant representations and the previously mentioned Gauge-Invariant Uniqueness Theorems.
Most of the material in this section is not new, and serves more as an abridged account of \cite{BB24, DeK24}. 
The reader is directed to Propositions \ref{P:maxNT}, \ref{P:repTfam} and \ref{P:Tfamga}, as well as to Lemma \ref{L:GIUTequiv} for the new results in this section.

In Section \ref{S:NT=T} we turn to showing the alignment of NT-$2^d$-tuples with T-families directly, using the definitions alone.
In particular, the direct passage from T-families to NT-$2^d$-tuples resolves an open question from the author's PhD thesis \cite[Remark 6.2.8]{De24}.

In Section \ref{S:app} we give several applications of the main result.
More specifically, we use it to simplify \cite[Theorem 4.2.11]{DeK24} in the proper case.
We then interpret the simplified parametrising objects in the contexts of C*-dynamical systems and row-finite higher-rank graphs.

\medskip

\noindent {\bf Acknowledgements.} Part of the material presented in the current work appears in the PhD thesis of the author.
Accordingly, the author acknowledges support from EPSRC as part of his PhD thesis on the programme ``The Structure of C*-Algebras of Product Systems" (Ref. 2441268).
The author also gives heartfelt thanks to Evgenios Kakariadis, for carefully reading the initial drafts of the manuscript and offering helpful feedback thereupon.

\medskip

\noindent {\bf Open access statement.} For the purpose of open access, the author has applied a Creative Commons Attribution (CC BY) license to any Author Accepted Manuscript (AAM) version arising.

\section{C*-correspondences and product systems}\label{S:prod sys}

\noindent We begin by presenting the key concepts from the theory of C*-correspondences and product systems that we will need.
The results in this section are stated without proof and not always at full generality, e.g., we only consider product systems over $\bZ_+^d$ rather than an arbitrary unital semigroup.
For a more comprehensive and general introduction, including full proofs, the reader is directed to \cite[Chapter 2]{De24}.

\subsection{Notation}\label{Ss:notation}

By a lattice we will always mean a distributive lattice with operations $\vee$ and $\wedge$.
We write $\bZ_+$ for the nonnegative integers $\{0,1,\dots\}$ and $\bN$ for the positive integers $\{1,2,\dots\}$.
We denote the unit circle in the complex plane by $\bT$.
We use $H$ to denote an arbitrary Hilbert space.
If $A,B$ and $C$ are sets and $f \colon A\times B\to C$ is a map, then we set
\[
f(A,B) := \{f(a,b)\mid a\in A,b\in B\};
\]
for example $\sca{H,H} := \{\sca{\xi,\eta}\mid\xi,\eta\in H\}$.
If $V$ is a normed vector space and $S\subseteq V$ is a subset, then $[S]$ denotes the norm-closed linear span of $S$ inside $V$.


All ideals of C*-algebras are taken to be two-sided and norm-closed.
If $A$ is a C*-algebra and $S\subseteq A$ is a subset, then $\ca(S)$ denotes the C*-subalgebra of $A$ generated by $S$, and $\sca{S}$ denotes the ideal of $A$ generated by $S$.
If $I\subseteq A$ is an ideal, then we set $I^\perp:=\{a\in A\mid aI=\{0\}\}$.
Let $A$ and $B$ be C*-algebras generated by subsets $\{a_i\mid i\in\bI\}$ and $\{b_i\mid i\in\bI\}$, respectively, where $\bI$ is a non-empty set.
Then a map $\Phi \colon A \to B$ is called \emph{canonical} if it preserves generators of the same index, i.e., $\Phi(a_i) = b_i$ for all $i \in \bI$.

\subsection{C*-correspondences}\label{Ss:C*-cor}

We assume familiarity with the elementary theory of right Hilbert C*-modules.
The reader is addressed to \cite{Lan95, MT05} for an excellent introduction to the subject.
We will briefly outline the fundamentals of the theory of C*-correspondences.
We also recount Katsura's parametrisation of gauge-invariant ideals \cite{Kat07}.

Let $A$ be a C*-algebra and $X$ be a right Hilbert $A$-module.
We write $\L(X)$ for the C*-algebra of adjointable operators on $X$, and $\K(X)$ for the ideal of (generalised) compact operators on $X$.
Recall that $\K(X)$ is densely spanned by the rank-one operators $\Theta_{\xi,\eta}^X \colon \zeta\mapsto\xi\sca{\eta,\zeta}$, for $\xi,\eta,\zeta\in X$.
Where there is no potential ambiguity, we will write $\Theta_{\xi,\eta}$ instead of $\Theta_{\xi, \eta}^X$.

A \emph{C*-correspondence} $X$ over a C*-algebra $A$ is a right Hilbert $A$-module equipped with a left action implemented by a $*$-homomorphism $\phi_X \colon A\to\L(X)$.
When the left action is clear from the context, we will abbreviate $\phi_X(a)\xi$ as $a\xi$, for $a\in A$ and $\xi\in X$.
We say that $X$ is \emph{non-degenerate} if $[\phi_X(A)X]=X$. 
If $\phi_X$ is injective, then we say that $X$ is \emph{injective}. 
If $\phi_X(A)\subseteq\K(X)$, then we say that $X$ is \emph{proper}.
If $X$ is injective and proper, then we say that $X$ is \emph{regular}.

Any C*-algebra $A$ can be viewed as a non-degenerate regular C*-correspondence over itself, with right (resp. left) action given by right (resp. left) multiplication in $A$, and $A$-valued inner product given by $\sca{a,b}=a^*b$ for all $a,b\in A$.
Then $A\cong\K(A)$ by the injective left action $\phi_A$, and non-degeneracy is deduced by applying an approximate unit.

Let $X$ and $Y$ be C*-correspondences over a C*-algebra $A$.
We call an $A$-bimodule linear map $u \colon X\to Y$ a \emph{unitary} if it is a surjection that preserves the $A$-valued inner product.
If such a unitary exists, then it is adjointable \cite[Theorem 3.5]{Lan95} and we say that $X$ and $Y$ are \emph{unitarily equivalent} (symb. $X\cong Y$).

We write $X\otimes_A Y$ for the $A$-balanced tensor product.
Given $S\in\L(X)$, there exists an operator $S\otimes\text{id}_Y\in\L(X\otimes_A Y)$ defined on simple tensors by $x\otimes y\mapsto (Sx)\otimes y$ for all $x\in X$ and $y\in Y$, e.g., \cite[p. 42]{Lan95}.
The assignment $S\mapsto S\otimes\text{id}_Y$ constitutes a unital $*$-homomorphism $\L(X)\to\L(X\otimes_A Y)$.
In this way we can define a left action $\phi_{X\otimes_A Y}$ on $X\otimes_A Y$ by 
\[
\phi_{X\otimes_A Y}(a)= \phi_X(a) \otimes\id_Y\foral a\in A, 
\]
thereby endowing $X\otimes_A Y$ with the structure of a C*-correspondence over $A$.
The $A$-balanced tensor product is associative.
Moreover, the right action of $X$ yields a unitary $X \otimes_A A \to X$ determined by $\xi \otimes a\mapsto \xi a$ for all $\xi\in X$ and $a\in A$.
The left action of $X$ yields a unitary $A\otimes_A X\to[\phi_X(A)X]$ determined by $a\otimes\xi\mapsto\phi_X(a)\xi$ for all $a\in A$ and $\xi\in X$.
We recall \cite[Lemma 4.6]{Lan95}, rewritten slightly to match our setting.

\begin{lemma}\label{L:lance} \cite[Lemma 4.6]{Lan95}
Let $X$ and $Y$ be C*-correspondences over a C*-algebra $A$.
For $x\in X$, the equation $\Theta_x(y)=x\otimes y \; (y\in Y)$ defines an element $\Theta_x\in\L(Y,X\otimes_AY)$ satisfying
\begin{align*}
\|\Theta_x\| & =\|\phi_Y(\sca{x,x}^{1/2})\|\leq\|x\| \qand
\Theta_x^*(x'\otimes y)  =\phi_Y(\sca{x,x'})y \; (x'\in X, y\in Y).
 \end{align*}
\end{lemma}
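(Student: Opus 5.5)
The statement is Lance's Lemma 4.6: for $x\in X$, $\Theta_x(y)=x\otimes y$ defines an adjointable map $Y\to X\otimes_A Y$, with the stated norm formula and adjoint. I would prove it directly from the definition of the balanced tensor product, whose inner product on simple tensors is
\[
\sca{x\otimes y, x'\otimes y'}=\sca{y,\phi_Y(\sca{x,x'})y'}.
\]

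\textbf{Step 1: boundedness and the norm.} For $y\in Y$ we compute
\[
\sca{\Theta_x y,\Theta_x y}=\sca{y,\phi_Y(\sca{x,x})y}=\sca{\phi_Y(\sca{x,x}^{1/2})y,\phi_Y(\sca{x,x}^{1/2})y}.
\]
Hence $\|\Theta_x y\|=\|\phi_Y(\sca{x,x}^{1/2})y\|$ for every $y$. Taking the supremum over the unit ball gives $\|\Theta_x\|=\|\phi_Y(\sca{x,x}^{1/2})\|$. Since $\phi_Y$ is a $*$-homomorphism and therefore contractive, this is at most $\|\sca{x,x}^{1/2}\|=\|x\|$. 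Linearity of $\Theta_x$ is clear, and right $A$-linearity follows from $x\otimes(ya)=(x\otimes y)a$.

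\textbf{Step 2: the adjoint.} Define $T$ on simple tensors by $T(x'\otimes y)=\phi_Y(\sca{x,x'})y$. For $y'\in Y$,
\[
\sca{y',T(x'\otimes y)}=\sca{y',\phi_Y(\sca{x,x'})y}=\sca{x\otimes y',x'\otimes y}=\sca{\Theta_x y',x'\otimes y},
\]
and this identity extends by linearity to finite sums of simple tensors. The main obstacle is showing that $T$ is well defined and bounded on the completed tensor product, which is a quotient completion of the algebraic tensor product.

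I would handle this as follows. Take a finite sum $z=\sum x'_j\otimes y_j$ and put $w=\sum_j\phi_Y(\sca{x,x'_j})y_j$. The identity above gives $\sca{w,w}=\sca{\Theta_x w,z}$. By the Cauchy--Schwarz inequality for Hilbert modules,
\[
\|w\|^2\le\|\Theta_x w\|\,\|z\|\le\|x\|\,\|w\|\,\|z\|,
\]
so $\|w\|\le\|x\|\,\|z\|$. This bound shows three things: $T$ vanishes on null vectors, so it descends to the quotient; $T$ is bounded; and $T$ extends continuously to $X\otimes_A Y$.

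Finally, by density and continuity the adjoint relation $\sca{\Theta_x y',\zeta}=\sca{y',T\zeta}$ holds for all $\zeta\in X\otimes_A Y$. Therefore $\Theta_x\in\L(Y,X\otimes_A Y)$ with $\Theta_x^*=T$, which is the stated formula for the adjoint.
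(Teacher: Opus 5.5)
Your proof is correct and complete. The norm identity $\sca{\Theta_x y,\Theta_x y}=\sca{\phi_Y(\sca{x,x}^{1/2})y,\phi_Y(\sca{x,x}^{1/2})y}$ and the Cauchy--Schwarz estimate $\|w\|\le\|x\|\,\|z\|$ (which makes the formal adjoint vanish on null vectors and extend to the completion) form the standard argument for this lemma, while the paper itself gives no proof and simply defers to Lance's book.
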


A \emph{(Toeplitz) representation} $(\pi,t)$ of the C*-correspondence $X$ on $\B(H)$ is a pair of a $*$-homomorphism $\pi \colon A\to \B(H)$ and a linear map $t \colon X\to \B(H)$ that preserves the left action and inner product of $X$.
Then $(\pi,t)$ automatically preserves the right action of $X$. 
There exists a $*$-homomorphism $\psi\colon\K(X)\to\B(H)$ such that $\psi(\Theta_{\xi,\eta})=t(\xi)t(\eta)^*$ for all $\xi,\eta\in X$, e.g., \cite[Proposition 4.6.3]{BO08}.
We say that $(\pi,t)$ is \emph{injective} if $\pi$ is injective; then both $t$ and $\psi$ are isometric.
We write $\ca(\pi,t)$ for the C*-algebra generated by $\pi(A)$ and $t(X)$.

We say that a representation $(\pi,t)$ of $X$ \emph{admits a gauge action} $\ga$ if there exists a family $\{\ga_z\}_{z\in\bT}$ of $*$-endomorphisms of $\ca(\pi,t)$ such that
\[ 
\ga_z(\pi(a))=\pi(a) \foral a\in A \; \text{and} \; \ga_z(t(\xi))=zt(\xi) \foral \xi\in X, 
\]
for each $z\in\bT$.
When such a gauge action $\ga$ exists, it is necessarily unique. 
We also have that each $\ga_z$ is a $*$-automorphism, the family $\{\ga_z\}_{z\in\bT}$ is point-norm continuous, and we obtain a group homomorphism (also denoted by $\ga$) defined by
\[
\ga\colon\bT\to\text{Aut}(\ca(\pi,t)); z\mapsto\ga_z\foral z\in\bT.
\]
An ideal $\mathfrak{J}\subseteq\ca(\pi,t)$ is called \emph{gauge-invariant} or \emph{equivariant} if $\ga_z(\mathfrak{J})\subseteq\mathfrak{J}$ for all $z\in\bT$ (and thus $\ga_z(\fJ) = \fJ$ for all $z \in \bT$).

The \emph{Toeplitz-Pimsner algebra} $\T_X$ is the universal C*-algebra with respect to the representations of $X$.
Let $J$ be a subset of $A$ satisfying $J\subseteq\phi_X^{-1}(\K(X))$. 
The \emph{$J$-relative Cuntz-Pimsner algebra} $\O(J,X)$ is the universal C*-algebra with respect to the representations $(\pi,t)$ of $X$ that satisfy $\pi(a) =\psi(\phi_X(a))$ for all $a\in J$.
Traditionally the relative Cuntz-Pimsner algebras are defined with respect to ideals of $A$ rather than just subsets, however the two versions are equivalent since $\O(J,X)=\O(\sca{J},X)$.
When $J=\{0\}$, we have that $\O(J,X)=\T_X$.
When we take $J$ to be the ideal
\[
J_X:=(\ker\phi_X)^\perp\cap\phi_X^{-1}(\K(X))\subseteq A,
\]
we obtain that $\O(J_X,X)$ is the \emph{Cuntz-Pimsner algebra} $\O_X$  \cite{Kat03}.
Katsura's ideal $J_X$ is the largest ideal to which the restriction of $\phi_X$ is injective with image contained in $\K(X)$ \cite{Kat03}.

One of the main tools in the theory is the Gauge-Invariant Uniqueness Theorem, obtained in its full generality by Katsura \cite{Kat07}.
An alternative proof can be found in \cite{Kak16}, and Frei \cite{Fre21} extended this method to include all relative Cuntz-Pimsner algebras, in connection with \cite{Kat07}.

\begin{theorem}\cite[Corollary 11.8]{Kat07} \label{T:giut}
Let $X$ be a C*-correspondence over a C*-algebra $A$, let $J\subseteq A$ be an ideal satisfying $J\subseteq J_X$ and let $(\pi,t)$ be a representation of $X$. 
Then we have that $\O(J,X)\cong\ca(\pi,t)$ via a (unique) canonical $*$-isomorphism if and only if $(\pi,t)$ is injective, admits a gauge action and satisfies $\pi^{-1}(\psi(\K(X)))=J$.
\end{theorem}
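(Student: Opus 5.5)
The plan is to argue the two implications separately. In both directions the fixed point algebra of the gauge action is analysed through its cores, and a faithful conditional expectation transfers information between the full algebra and the fixed point algebra. Write $(\pi_J,t_J)$ for the universal representation of $X$ generating $\O(J,X)$, with induced $\psi_J$ on $\K(X)$.

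For the forward implication it suffices to check the three properties for $(\pi_J,t_J)$ itself, since a canonical $*$-isomorphism transports them.
\begin{enumerate}
\item The gauge action exists by universality. For each $z\in\bT$, the pair $(\pi_J,zt_J)$ is again a representation satisfying the $J$-covariance relation, because $\psi$ is unchanged when $t$ is multiplied by $z$. It therefore induces a $*$-endomorphism $\ga_z$ of $\O(J,X)$ with the required action on generators.
\item For injectivity and the identity $\pi_J^{-1}(\psi_J(\K(X)))=J$, I would build a concrete $J$-covariant representation. One option is the Fock representation compressed by the ideal $\K(\mathcal F_X J)$. Equivalently, one can take the quotient of $\T_X$ by the ideal generated by $\pi(a)-\psi(\phi_X(a))$ for $a\in J$, computed on the Fock space.
\item On this model, $J\subseteq J_X$ forces $\pi$ to be injective. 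The point is that $a\in\ker\pi\cap J$ would satisfy $\phi_X(a)=0$, and $J\cap\ker\phi_X=\{0\}$.
\item One then checks directly that an element $a$ with $\pi(a)\in\psi(\K(X))$ must lie in $J$. I would do this by computing on the first summand $A$ of $\mathcal F_X$ modulo $\mathcal F_X J$.
\end{enumerate}
This computation, showing that $\pi_J^{-1}(\psi_J(\K(X)))$ is no larger than $J$, is the technically delicate part of the forward direction.

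For the converse, suppose $(\pi,t)$ is injective, admits a gauge action and satisfies $\pi^{-1}(\psi(\K(X)))=J$. Universality gives a canonical surjection $\Phi\colon\O(J,X)\to\ca(\pi,t)$, and $\Phi$ intertwines the two gauge actions. Averaging over $\bT$ gives faithful conditional expectations onto the fixed point algebras, and these commute with $\Phi$. So it suffices to prove that $\Phi$ is injective on the fixed point algebra $\O(J,X)^\ga$.

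The fixed point algebra is the inductive limit of the cores $B_{[0,n]}=\pi(A)+\psi_1(\K(X))+\dots+\psi_n(\K(X^{\otimes n}))$, so it is enough to prove injectivity on each $B_{[0,n]}$. This is done by induction on $n$, and the key step is the following. Suppose an element $b=\pi(a)+\sum_{k=1}^n\psi_k(k_k)$ is sent to $0$. Compressing against $t(X)$ on the left, and using Lemma \ref{L:lance} to handle the maps $\Theta_x$, one shows that $a$ satisfies $\pi(a)\in\psi(\K(X))$. The hypothesis then gives $a\in J$, so $b$ can be rewritten in the smaller core $B_{[1,n]}$. The same argument in both algebras shows that $\Phi$ restricted to $B_{[1,n]}$ is a copy of the map at level $n-1$ tensored with $X$, which is injective by the inductive hypothesis.

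I expect the main obstacle to be this reduction step. Making the passage from $\pi(a)\in\psi(\K(X))$ to $a\in J$ rigorous requires controlling the component of $b$ in the orthogonal complement of $\psi(\K(X))$. This is where Katsura's ideal $J_X$ and the condition $\ker\phi_X\cap J=\{0\}$ enter. I would first try to handle it with approximate units of $\K(X)$, together with the observation that $\pi(a)(1-p)=0$ forces $\phi_X(a)$ to be compact, where $p$ is the strong limit of $\psi$ applied to an approximate unit of $\K(X)$.
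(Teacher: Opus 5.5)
The paper gives no proof of this statement. Theorem~\ref{T:giut} is imported from Katsura's Corollary~11.8, and the alternative proofs of Kakariadis and Frei are only mentioned. Your architecture is the standard one: a Fock model modulo $\K(\mathcal{F}_XJ)$ for the forward direction, and faithful conditional expectations with induction over the cores for the converse. The converse is essentially sound.

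\textbf{The gap is in step (iii).} The implication you rely on, that $a\in\ker\pi\cap J$ forces $\phi_X(a)=0$, is false for general $J$-covariant representations. Take the zero representation of the identity correspondence $X=A$, where $J=A=J_X$. The available manipulations are also circular: $\pi(a)=0$ only yields $\pi(\sca{X,aX})=t(X)^*\pi(a)t(X)=\{0\}$, i.e.\ $\sca{X,aX}\subseteq\ker\pi$.

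Injectivity must instead be extracted from the specific model by an asymptotic argument, and this is exactly where $J\cap\ker\phi_X=\{0\}$ is used. Suppose the Fock operator $\pi_\infty(a)$ lies in $\K(\mathcal{F}_XJ)$.
\begin{enumerate}
\item Compact operators vanish along the tails of $\mathcal{F}_X=\bigoplus_m X^{\otimes m}$, so $\nor{\phi_{X^{\otimes m}}(a)}\to 0$.
\item Since $\pi_\infty(a^*a)\in\K(\mathcal{F}_XJ)$, (\ref{Eq: comp}) gives $\sca{X^{\otimes m},a^*aX^{\otimes m}}\subseteq J$ for all $m$.
\item $\phi_X$ is isometric on $J$. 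So for $\xi\in X^{\otimes m}$ and $c:=\sca{\xi,a^*a\xi}\in J$,
\[
\sup_{\nor{\eta}\leq 1}\nor{\phi_{X^{\otimes(m+1)}}(a)(\xi\otimes\eta)}^2=\sup_{\nor{\eta}\leq 1}\nor{\sca{\eta,c\,\eta}}=\nor{\phi_X(c)}=\nor{c}=\nor{\phi_{X^{\otimes m}}(a)\xi}^2 .
\]
\end{enumerate}
Hence $m\mapsto\nor{\phi_{X^{\otimes m}}(a)}$ is non-decreasing, and $\nor{a}\leq\lim_m\nor{\phi_{X^{\otimes m}}(a)}=0$.

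By contrast, step (iv), which you single out as delicate, is immediate. If $\pi_\infty(a)-\psi_\infty(k)\in\K(\mathcal{F}_XJ)$, pair against the summand $X^{\otimes 0}=A$, on which $\psi_\infty(k)$ vanishes. This gives $AaA\subseteq J$.

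\textbf{The converse.} Your last paragraph misplaces the difficulty. Passing from $\pi(a)\in\psi(\K(X))$ to $a\in J$ is literally the hypothesis. The proposed ``observation'' is also false. Represent $X=\ell^2$ over $A=\bC$ by isometries $t(e_i)=S_i$ with $\sum_iS_iS_i^*=1$ strongly; then $p=1$, yet $\phi_X(1)$ is not compact.

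What actually needs saying is short.
\begin{enumerate}
\item $\Phi$ exists only once $(\pi,t)$ is $J$-covariant. This follows from injectivity: $\pi(a)=\psi(k)$ gives $t(\phi_X(a)\xi)=t(k\xi)$ for all $\xi$, hence $k=\phi_X(a)$ because $t$ is isometric.
\item $\pi(a)\in B^{(\pi,t)}_{[1,n]}$ implies $\pi(a)\in\psi(\K(X))$. For an approximate unit $(e_\la)$ of $\K(X)$, the net $\psi(e_\la)$ is a right approximate unit of $B^{(\pi,t)}_{[1,n]}$, so $\pi(a)=\lim_\la\psi(\phi_X(a)e_\la)$.
\item Let $x\in B_{[1,n]}$ with $\Phi(x)=0$. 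Each $t_J(\eta)^*x^*x\,t_J(\eta)\in B_{[0,n-1]}$ is killed by $\Phi$, hence vanishes by the inductive hypothesis. Thus $x\,\psi_J(\K(X))=0$, and $x=\lim_\la x\,\psi_J(e_\la)=0$.
\end{enumerate}
This makes your ``tensored with $X$'' heuristic rigorous without any appeal to Lemma~\ref{L:lance}.
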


Let $A$ be a C*-algebra, let $I\subseteq A$ be an ideal and let $X$ be a right Hilbert $A$-module.
Then the set $XI$ is a closed linear subspace of $X$ that is invariant under the right action of $A$, e.g, \cite[p. 576]{FMR03} or \cite[Corollary 1.4]{Kat07}.
In particular, we have that $[XI]=XI$. 
Consequently, $XI$ is itself a right Hilbert $A$-module under the operations and $A$-valued inner product inherited from $X$. 
We may also view $XI$ as a right Hilbert $I$-module. 
Due to \cite[Lemma 2.6]{FMR03}, we will identify $\K(XI)$ as an ideal of $\K(X)$ in the following natural way:
\begin{align*}
\K(XI) & =\ol{\spn}\{\Theta_{\xi,\eta}^X \mid \xi,\eta\in XI\}\subseteq\K(X).
\end{align*}
When $X$ is in addition a C*-correspondence over $A$, we may equip $XI$ with a C*-correspondence structure via the left action
\[
\phi_{XI}\colon A\to\L(XI); \phi_{XI}(a)=\phi_X(a)|_{XI} \foral a \in A.
\]

Following \cite{Kat07}, and in order to ease notation, we will use the symbol $[ \hspace{1pt} \cdot \hspace{1pt} ]_I$ to denote the quotient maps associated with a right Hilbert $A$-module $X$ and an ideal $I\subseteq A$.
For example, we use it for both the quotient map $A\to A/I\equiv [A]_I$ and the quotient map $X\to X/XI\equiv[X]_I$.
We equip the complex vector space $[X]_I$ with the following right $[A]_I$-module multiplication:
\[ 
[\xi]_I[a]_I=[\xi a]_I \foral \xi\in X,a\in A, 
\]
as well as the following $[A]_I$-valued inner product:
\[ 
\sca{[\xi]_I,[\eta]_I}=[\sca{\xi,\eta}]_I \foral \xi,\eta\in X. 
\]
Consequently, the space $[X]_I$ carries the structure of an inner-product right $[A]_I$-module. 
By \cite[Lemma 1.5]{Kat07}, the canonical norm on $[X]_I$ induced by the $[A]_I$-valued inner product coincides with the usual quotient norm. 
Thus $[X]_I$ is a right Hilbert $[A]_I$-module. 
We may define a $*$-homomorphism $[\hspace{1pt} \cdot \hspace{1pt}]_I\colon \L(X)\to\L([X]_I)$ by
\[ 
[S]_I[\xi]_I=[S\xi ]_I \foral S\in\L(X), \xi\in X. 
\]
We include \cite[Lemma 1.6]{Kat07} in its entirety, as it will be especially relevant when we restrict our attention to proper product systems.

\begin{lemma}\label{L:Kat07}\cite[Lemma 1.6]{Kat07}
Let $X$ be a right Hilbert module over a C*-algebra $A$ and let $I\subseteq A$ be an ideal.
Then for all $\xi,\eta\in X$, we have that $[\Theta_{\xi,\eta}^X]_I=\Theta_{[\xi]_I,[\eta]_I}^{[X]_I}$.
The restriction of the map $[\hspace{1pt} \cdot \hspace{1pt}]_I \colon \L(X)\to\L([X]_I)$ to $\K(X)$ is a surjection onto $\K([X]_I)$ with kernel $\K(XI)$.
\end{lemma}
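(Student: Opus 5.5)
The plan has three steps: prove the formula on rank-one operators, deduce surjectivity, and then compute the kernel. The kernel is the real content, and I would handle it with the linking algebra of $X$.

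\emph{The formula and surjectivity.} For $\xi,\eta,\zeta\in X$,
\[
[\Theta_{\xi,\eta}^X]_I[\zeta]_I=[\xi\sca{\eta,\zeta}]_I=[\xi]_I[\sca{\eta,\zeta}]_I=[\xi]_I\sca{[\eta]_I,[\zeta]_I}=\Theta^{[X]_I}_{[\xi]_I,[\eta]_I}[\zeta]_I .
\]
This uses only the definitions of the quotient module structure and of $[\hspace{1pt}\cdot\hspace{1pt}]_I$ on $\L(X)$. By linearity and continuity, $[\K(X)]_I\subseteq\K([X]_I)$. The image of a $*$-homomorphism between C*-algebras is closed. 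Every generator $\Theta_{[\xi]_I,[\eta]_I}$ of $\K([X]_I)$ is hit, since $X\to[X]_I$ is onto. Hence the restriction maps $\K(X)$ onto $\K([X]_I)$.

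\emph{The easy inclusion.} First, $\K(XI)$ lies in the kernel. If $\xi,\eta\in XI$, then $\Theta_{\xi,\eta}\zeta=\xi\sca{\eta,\zeta}\in XI$, so $[\Theta_{\xi,\eta}]_I=0$. Continuity then gives the inclusion for all of $\K(XI)$.

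\emph{The reverse inclusion: setup.} Here I would not approximate operators directly, which gets awkward. Instead I would pass to the linking algebra
\[
\mathfrak{L}(X)=\begin{pmatrix}\K(X)&X\\ X^*&A\end{pmatrix}.
\]
Let $\fJ$ be the ideal of $\mathfrak{L}(X)$ generated by $\begin{pmatrix}0&0\\0&I\end{pmatrix}$. Using the Cohen factorisation $XI=\{\xi a\mid \xi\in X,a\in I\}$ and the factorisation $a=bc$ in $I$, I would check that $\fJ$ has corners
\[
\begin{pmatrix}\K(XI)&XI\\ (XI)^*&I\end{pmatrix}.
\]
The top-left corner is spanned by $\Theta_{\xi b c,\eta}=\Theta_{\xi b,\eta c^*}$, which has both vectors in $XI$. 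This identifies $\fJ\cap\K(X)=\K(XI)$, using the identification of $\K(XI)$ inside $\K(X)$ from \cite[Lemma 2.6]{FMR03}. The quotient $\mathfrak{L}(X)/\fJ$ is then a C*-algebra with corners $\K(X)/\K(XI)$, $X/XI$ and $A/I$. Its off-diagonal corner is a right Hilbert $A/I$-module whose norm, computed in the C*-algebra, is $\|[\sca{\xi,\xi}]_I\|^{1/2}$. By \cite[Lemma 1.5]{Kat07} this coincides with the quotient norm on $[X]_I$.

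\emph{The reverse inclusion: main obstacle.} The crux is to show that the top-left corner $\K(X)/\K(XI)$ acts faithfully, by left multiplication, on the off-diagonal corner $[X]_I$. Once that holds, this corner is isometrically $\K([X]_I)$, and the action agrees with $[\hspace{1pt}\cdot\hspace{1pt}]_I$ on rank-ones. Then any $T\in\K(X)$ with $[T]_I=0$ has zero image in $\mathfrak{L}(X)/\fJ$, so $T\in\fJ\cap\K(X)=\K(XI)$.

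For faithfulness I would use that the top-left corner $p(\mathfrak{L}/\fJ)p$ is the closed span of products $x y^*$ of off-diagonal elements, i.e.\ the corner is full. If $S$ is in this corner and $S[X]_I=0$, then $SS^*$ is a limit of sums of terms $S x y^* S^*=0$. So $S=0$, and an injective $*$-homomorphism is isometric.
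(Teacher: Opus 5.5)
Your argument is correct. The paper gives no proof of this statement; it quotes it from \cite[Lemma 1.6]{Kat07} and uses it as a black box. So the comparison is with the standard direct argument rather than with anything in the text.

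Your first three steps are the usual ones: the rank-one formula, surjectivity via closedness of the image, and $\K(XI)\subseteq\ker$. For the reverse inclusion, your linking-algebra argument also holds up. Write $p$ for the top-left corner projection and $q$ for the quotient map $\mathfrak{L}(X)\to\mathfrak{L}(X)/\fJ$. The corner computations $p\fJ p=\K(XI)$ (via $\Theta_{\xi bc,\eta}=\Theta_{\xi b,\eta c^*}$) and $p\fJ(1-p)=XI$ are right, and the fullness argument does give faithfulness.

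The detour is heavier than needed, though. The direct approximation you wanted to avoid is short. Suppose $T\in\K(X)$ satisfies $TX\subseteq XI$, and take an approximate unit $(e_\la)$ of $\K(X)$ consisting of finite sums $e_\la=\sum_j\Theta_{\xi_j,\eta_j}$. Then $Te_\la=\sum_j\Theta_{T\xi_j,\eta_j}$, and each summand lies in $\K(XI)$ by the same factorisation trick. Hence $T=\lim_\la Te_\la\in\K(XI)$.

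Your faithfulness computation is exactly this identity moved into the quotient. There $q(T)q(T)^*$ is a limit of sums of terms $q(T\xi)q(\eta)^*$, which vanish because $T\xi\in XI$. So the linking algebra adds no new input to the kernel computation. In particular, that step needs neither \cite[Lemma 1.5]{Kat07} nor the isometric identification of the corner with $\K([X]_I)$: $[T]_I=0$ already gives $q(T)q(\zeta)=q(T\zeta)=0$ from $p\fJ(1-p)=XI$.

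What your route buys is conceptual. The same construction proves \cite[Lemma 1.5]{Kat07} for free: the quotient C*-norm on the off-diagonal corner is both the quotient norm of $X/XI$ and, by the C*-identity, $\|[\sca{\xi,\xi}]_I\|^{1/2}$. It also realises $\K(X)/\K(XI)\cong\K([X]_I)$ as a corner of a quotient of the linking algebra, so both of Katsura's lemmas come out of one picture. The direct proof is shorter but gives only the kernel statement.
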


Therefore, given an ideal $I \subseteq A$, we obtain the surjective maps
\begin{align*}
A \to A/I & \textup{ with kernel $I$}, \\
X \to X/ XI & \textup{ with kernel $XI$}, \\
\K(X) \to \K(X/XI) & \textup{ with kernel $\K(XI)$},
\end{align*}
as well as the map $\L(X) \to \L(X/XI)$ (which may \emph{not} be surjective), all of which will be denoted by the same symbol $[\hspace{1pt} \cdot \hspace{1pt}]_I$.
Lemma \ref{L:Kat07} implies that if $k\in\K(X)$, then
\begin{equation}\label{Eq: comp}
k\in\K(XI) \iff \sca{X, k X} \subseteq I.
\end{equation}
Since $\K(XI)$ is an ideal in $\K(X)$ and $\K(X)$ is an ideal in $\L(X)$, we have that $\K(XI)$ is an ideal in $\L(X)$.
Hence we may consider the quotient C*-algebra $\L(X)/\K(XI)$.

If $X$ is a C*-correspondence over $A$, then we need to make an additional imposition on $I$ in order for $[X]_I$ to carry a canonical structure as a C*-correspondence over $[A]_I$. 
More specifically, we say that $I$ is \emph{positively invariant (for $X$)} if it satisfies
\[ 
[\sca{X,IX}]\subseteq I. 
\]
When $I$ is positively invariant, we can equip $[X]_I$ with the left action defined by
\[
\phi_{[X]_I} \colon [A]_I \to \L([X]_I); [a]_I \mapsto [\phi_X(a)]_I\foral a\in A.
\] 
To ease notation, we will denote $\phi_{[X]_I}$ by $[\phi_X]_I$.
Moreover, we define two ideals of $A$ that are related to $I$ and $X$, namely
\[
X^{-1}(I):=\{a\in A \mid \; \sca{X,aX}\subseteq I\},
\]
and
\[
J(I,X):=\{a\in A \mid [\phi_X(a)]_I\in \K([X]_I), aX^{-1}(I)\subseteq I\}.
\]
Note that $I$ does \emph{not} need to be positively invariant in order to make sense of these ideals.
Observe also that $A^{-1}(I)=I$ and $X^{-1}(I)\subseteq X^{-1}(J)$ whenever we have ideals $I,J\subseteq A$ satisfying $I\subseteq J$.
The use of the ideal $J(I,X)$ is pivotal in the work of Katsura \cite{Kat07} for accounting for $*$-representations of $\T_X$ that may not be injective on $X$.

As per \cite[Definitions 5.6 and 5.12]{Kat07}, we define a \emph{T-pair} of $X$ to be a pair $\L=\{\L_\mt,\L_{\{1\}}\}$ of ideals of $A$ such that $\L_\mt$ is positively invariant for $X$ and $\L_\mt \subseteq \L_{\{1\}} \subseteq J(\L_\mt,X)$; a T-pair $\L$ that satisfies $J_X \subseteq \L_{\{1\}}$ is called an \emph{O-pair}.
Theorem 8.6 and Proposition 8.8 of \cite{Kat07} are the key results that inspired the main theorems of \cite{BB24, DeK24}.

\begin{theorem}\label{T:Kat par} \cite[Theorem 8.6, Proposition 8.8]{Kat07}
Let $X$ be a C*-correspondence over a C*-algebra $A$.
Then there is a bijection between the set of T-pairs (resp. O-pairs) of $X$ and the set of gauge-invariant ideals of $\T_X$ (resp. $\O_X$) that preserves inclusions and intersections.
\end{theorem}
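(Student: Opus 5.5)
The plan is the standard one for Katsura's parametrisation: define mutually inverse, inclusion-preserving maps between T-pairs and gauge-invariant ideals of $\T_X$, and deduce the O-pair statement by passing to quotients.

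\emph{From ideals to T-pairs.} Let $(\ol{\pi},\ol{t})$ be the universal representation generating $\T_X$, and let $\fJ\subseteq\T_X$ be a gauge-invariant ideal. Set
\[
\L_\mt:=\ol{\pi}^{-1}(\fJ), \qquad \L_{\{1\}}:=\{a\in A\mid \ol{\pi}(a)-\ol{\psi}(\phi_X(a))\in\fJ\}.
\]
I would check that $\{\L_\mt,\L_{\{1\}}\}$ is a T-pair as follows.
\begin{enumerate}
\item $\L_\mt$ is positively invariant, because $\ol{t}(\xi)^*\ol{\pi}(a)\ol{t}(\eta)\in\fJ$.
\item $\L_\mt\subseteq\L_{\{1\}}$. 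If $a\in\L_\mt$, then $\ol{\psi}(\phi_X(a))\ol{t}(\xi)=\ol{\pi}(a)\ol{t}(\xi)\in\fJ$. By an approximate-unit argument this forces $\ol{\psi}(\phi_X(a))\in\fJ$, hence $a\in\L_{\{1\}}$.
\item $\L_{\{1\}}\subseteq J(\L_\mt,X)$. Pass to the quotient representation of $[X]_{\L_\mt}$ in $\T_X/\fJ$, which is injective on $[A]_{\L_\mt}$. There $[a]$ is represented by an element of the compacts, so Lemma~\ref{L:Kat07} gives $[\phi_X(a)]_{\L_\mt}\in\K([X]_{\L_\mt})$. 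For the orthogonality condition $aX^{-1}(\L_\mt)\subseteq\L_\mt$: if $b\in X^{-1}(\L_\mt)$, then $\ol{\pi}(b)\ol{t}(\xi)\in\fJ$, and therefore $\ol{\pi}(ab)\equiv\ol{\psi}(\phi_X(a))\ol{\pi}(b)\in\fJ$.
\end{enumerate}

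\emph{From T-pairs to ideals.} Given a T-pair $\L$, form the quotient correspondence $[X]_{\L_\mt}$ over $[A]_{\L_\mt}$, where $[\L_{\{1\}}]_{\L_\mt}\subseteq J_{[X]_{\L_\mt}}$. Let $\fJ_\L$ be the kernel of the canonical surjection
\[
\T_X\to\O([\L_{\{1\}}]_{\L_\mt},[X]_{\L_\mt}).
\]
This kernel is gauge-invariant because the gauge actions intertwine. Equivalently, $\fJ_\L$ is the ideal generated by $\ol{\pi}(\L_\mt)$ together with the elements $\ol{\pi}(a)-\ol{\psi}(\phi_X(a))$ for $a\in\L_{\{1\}}$.

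\emph{The two maps are mutually inverse.}
\begin{enumerate}
\item Ideal $\to$ T-pair $\to$ ideal. The representation of $[X]_{\L_\mt}$ in $\T_X/\fJ$ is injective, admits a gauge action, and has covariance ideal exactly $[\L_{\{1\}}]_{\L_\mt}$. Theorem~\ref{T:giut} therefore gives $\T_X/\fJ\cong\O([\L_{\{1\}}]_{\L_\mt},[X]_{\L_\mt})$ canonically, so $\fJ=\fJ_\L$.
\item T-pair $\to$ ideal $\to$ T-pair. Apply Theorem~\ref{T:giut} in the other direction: the universal representation of $\O([\L_{\{1\}}]_{\L_\mt},[X]_{\L_\mt})$ is injective on $[A]_{\L_\mt}$, with covariance ideal $[\L_{\{1\}}]_{\L_\mt}$. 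Pulling this back recovers $\L$.
\end{enumerate}
Both maps visibly preserve inclusions. Intersections are preserved because the map $\fJ\mapsto\L$ is defined by preimages, and preimages commute with intersections.

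\emph{The O-pair case.} Gauge-invariant ideals of $\O_X$ correspond to gauge-invariant ideals of $\T_X$ containing the kernel of $\T_X\to\O_X$, which is $\fJ_{\{0,J_X\}}$. By order preservation these are exactly the ideals coming from T-pairs with $J_X\subseteq\L_{\{1\}}$, i.e.\ the O-pairs.

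The main obstacle is showing that the covariance ideal in $\T_X/\fJ$ equals $[\L_{\{1\}}]_{\L_\mt}$. This requires that whenever $\ol{\pi}(a)$ lies in $\ol{\psi}(\K(X))+\fJ$, the compact can be taken to be $\phi_X(a)$ modulo $\K(X\L_\mt)$. I expect this to need a careful use of Lemma~\ref{L:Kat07} together with the injectivity of $\psi$ on the quotient.
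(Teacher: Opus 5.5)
The paper gives no proof of this statement; it cites \cite{Kat07} and notes only that Theorem~\ref{T:giut} is used. Your architecture fits that: attach to $\fJ$ the kernel of $\ol{\pi}$ and the covariance ideal of the induced representation of $[X]_{\L_\mt}$, take the kernel of $\T_X\to\O([\L_{\{1\}}]_{\L_\mt},[X]_{\L_\mt})$ for the inverse, and apply Theorem~\ref{T:giut} in both directions. The gap is in three places: your definition $\L_{\{1\}}:=\{a\in A\mid\ol{\pi}(a)-\ol{\psi}(\phi_X(a))\in\fJ\}$, your step (ii), and your ``equivalent'' generators for $\fJ_\L$. All three use $\ol{\psi}(\phi_X(a))$, which only makes sense when $\phi_X(a)\in\K(X)$. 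The statement is for an arbitrary correspondence, and a T-pair only requires $[\phi_X(a)]_{\L_\mt}\in\K([X]_{\L_\mt})$. So your map $\fJ\mapsto\L$ does not land in T-pairs, and T-pairs with $\L_{\{1\}}\not\subseteq\phi_X^{-1}(\K(X))$ are never reached. For example, take $A=\bC$ and $X=\ell^2$ with scalar left action, so $\phi_X(1)=I_{\ell^2}\notin\K(\ell^2)$. The ideal $\fJ=\T_X$ must correspond to the T-pair $\{\bC,\bC\}$, but your formula cannot be evaluated at $a=1$, and $\L_\mt\subseteq\L_{\{1\}}$ fails. What you flag as the ``main obstacle'' is really the agreement of your definition with the correct one, and that agreement is only available when the relevant $\phi_X(a)$ are compact, e.g.\ when $X$ is proper.

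The remedy is to set $\L_{\{1\}}:=\{a\in A\mid\ol{\pi}(a)\in\ol{\psi}(\K(X))+\fJ\}$.
\begin{itemize}
\item Your obstacle then disappears. By Lemma~\ref{L:Kat07} the map $\K(X)\to\K([X]_{\L_\mt})$ is onto, so the induced representation of $[X]_{\L_\mt}$ in $\T_X/\fJ$ has covariance ideal $[\L_{\{1\}}]_{\L_\mt}$ by construction.
\item That representation is injective, so its $t$ is isometric. Hence $\ol{\pi}(a)-\ol{\psi}(k)\in\fJ$ forces $(\phi_X(a)-k)X\subseteq X\L_\mt$, which gives (iii). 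Your orthogonality argument goes through with $k$ in place of $\phi_X(a)$.
\item The generators of $\fJ_\L$ become $\ol{\pi}(\L_\mt)$ together with $\ol{\pi}(a)-\ol{\psi}(k)$, for $a\in\L_{\{1\}}$ and $k\in\K(X)$ with $[k]_{\L_\mt}=[\phi_X(a)]_{\L_\mt}$.
\end{itemize}
The price is that intersections are no longer preserved merely because ``preimages commute with intersections''. The set $\ol{\psi}(\K(X))+(\fJ_1\cap\fJ_2)$ is a priori smaller than $(\ol{\psi}(\K(X))+\fJ_1)\cap(\ol{\psi}(\K(X))+\fJ_2)$, so you need an extra argument:
\begin{itemize}
\item Suppose $\ol{\pi}(a)-\ol{\psi}(k_i)\in\fJ_i$ and set $I_i:=\ol{\pi}^{-1}(\fJ_i)$. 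Then $(\phi_X(a)-k_i)X\subseteq XI_i$.
\item Hence $k_1-k_2$ is compact and maps $X$ into $X(I_1+I_2)$, so $k_1-k_2\in\K(X(I_1+I_2))=\K(XI_1)+\K(XI_2)$.
\item Write $k_1-k_2=m_2-m_1$ with $m_i\in\K(XI_i)$, and put $k:=k_1+m_1=k_2+m_2$.
\item Since $\ol{\psi}(\K(XI_i))\subseteq\fJ_i$, you get $\ol{\pi}(a)-\ol{\psi}(k)\in\fJ_1\cap\fJ_2$.
\end{itemize}
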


It should be noted that Theorem \ref{T:giut} is used in the proof of Theorem \ref{T:Kat par}.
The bijection of Theorem \ref{T:Kat par} restricts appropriately to a parametrisation of the gauge-invariant ideals of any relative Cuntz-Pimsner algebra \cite[Proposition 11.9]{Kat07}.

\subsection{Product systems}\label{Ss:prod sys}

Henceforth we will be working with the semigroup $\bZ_+^d$ (for $d\in\bN$) extensively.
Accordingly, we fix the following notation.
For $d\in\bN$, we write $[d] := \{1, 2, \dots, d\}$. 
We denote the usual free generators of $\bZ_+^d$ by $\un{1}, \dots, \un{d}$, and we set $\un{0} = (0, \dots, 0)$. 
For an element $\un{n}=(n_1,\dots,n_d)\in\bZ_+^d,$ we define the \emph{length} of $\un{n}$ by
\[ 
|\un{n}|:= \sum\{n_i\mid i\in[d]\}.
\]
For $\mt\neq F \subseteq [d]$, we write
\[ 
\un{1}_F := \sum\{\un{i}\mid i\in F\} \; \text{and} \; \un{1}_\mt:=\un{0}. 
\]
We consider the lattice structure on $\bZ_+^d$ given by
\[ 
\un{n} \vee \un{m} := (\max\{n_i, m_i\})_{i=1}^d \qand \un{n} \wedge \un{m} := (\min\{n_i, m_i\})_{i=1}^d. 
\]
The semigroup $\bZ_+^d$ imposes a partial order on $\bZ^d$ that is compatible with the lattice structure. 
Specifically, we say that $\un{n}\leq\un{m}$ (resp. $\un{n} < \un{m}$) if and only if $n_i\leq m_i$ for all $i\in[d]$ (resp. $\un{n} \leq \un{m}$ and $\un{n} \neq \un{m}$). 
We denote the \emph{support} of $\un{n}$ by 
\[
\supp \un{n} := \{i \in [d] \mid n_i \neq 0\},
\]
and we write
\[ 
\un{n} \perp \un{m} \iff \supp \un{n} \bigcap \supp \un{m} = \mt.
\]
For $F\subseteq[d]$, we write $\un{n} \perp F$ if $\supp \un{n} \bigcap F = \mt$.
Notice that the set $\{\un{n}\in\bZ_+^d\mid \un{n}\perp F\}$ is directed; indeed, if $\un{n},\un{m}\perp F$ then $\un{n},\un{m}\leq\un{n}\vee\un{m}\perp F$.
Consequently, we can make sense of limits with respect to $\un{n} \perp F$.

A \emph{product system $X$ over $\bZ_+^d$ with coefficients in a C*-algebra $A$} is a family $\{X_\un{n}\}_{\un{n}\in\bZ_+^d}$ of C*-correspondences over $A$ together with multiplication maps $u_{\un{n},\un{m}} \colon X_\un{n}\otimes_A X_\un{m}\to X_{\un{n}+\un{m}}$ for all $\un{n},\un{m}\in\bZ_+^d$, such that:
\begin{enumerate}
\item $X_\un{0}=A$, viewing $A$ as a C*-correspondence over itself in the usual way; 
\item if $\un{n}=\un{0}$, then $u_{\un{0},\un{m}} \colon A\otimes_A X_\un{m}\to[\phi_\un{m}(A)X_\un{m}]$ is the unitary implementing the left action of $A$ on $X_\un{m}$;
\item if $\un{m}=\un{0}$, then $u_{\un{n},\un{0}} \colon X_\un{n}\otimes_A A\to X_\un{n}$ is the unitary implementing the right action of $A$ on $X_\un{n}$;
\item if $\un{n},\un{m}\in \bZ_+^d\setminus\{\un{0}\}$, then $u_{\un{n},\un{m}} \colon X_\un{n}\otimes_A X_\un{m}\to X_{\un{n}+\un{m}}$ is a unitary;
\item the multiplication maps are associative in the sense that
\[ 
u_{\un{n}+\un{m},\un{k}}(u_{\un{n},\un{m}}\otimes\text{id}_{X_\un{k}})=u_{\un{n},\un{m}+\un{k}}(\text{id}_{X_\un{n}}\otimes u_{\un{m},\un{k}})\foral \un{n},\un{m},\un{k}\in \bZ_+^d. 
\]
\end{enumerate}

Note that we use $\phi_\un{n}$ to denote the left action $\phi_{X_\un{n}}$ of $X_\un{n}$ for each $\un{n}\in\bZ_+^d$.
We refer to the C*-correspondences $X_\un{n}$ as the \emph{fibres} of $X$.
We do \emph{not} assume that the fibres are non-degenerate.
Accordingly, a certain degree of care is required when working with the multiplication maps $u_{\un{0},\un{m}}$ for $\un{m}\in\bZ_+^d$.
If $X_\un{n}$ is injective/proper/regular for all $\un{n}\in\bZ_+^d$, then we say that $X$ is \emph{injective}/\emph{proper}/\emph{regular}.
For brevity, we will write 
\[
\xi_\un{n}\xi_\un{m}\equiv u_{\un{n},\un{m}}(\xi_\un{n}\otimes\xi_\un{m})\foral\xi_\un{n}\in X_\un{n}, \xi_\un{m}\in X_\un{m}\; \text{and} \; \un{n},\un{m}\in\bZ_+^d, 
\]
with the understanding that $\xi_\un{n}$ and $\xi_\un{m}$ are allowed to differ when $\un{n}=\un{m}$.
Axioms (i) and (ii) imply that the unitary $u_{\un{0},\un{0}} \colon A\otimes_A A\to A$ is simply multiplication in $A$.
Axioms (ii) and (v) imply that
\[
\phi_{\un{n}+\un{m}}(a)(\xi_\un{n}\xi_\un{m}) = (\phi_\un{n}(a)\xi_\un{n})\xi_\un{m} \foral \xi_\un{n}\in X_\un{n}, \xi_\un{m}\in X_\un{m}\; \text{and} \; \un{n},\un{m}\in\bZ_+^d.
\]
Note that the maps involved in axiom (v) are linear and bounded, and are therefore determined by their respective actions on simple tensors.

For $\un{n}\in\bZ_+^d\setminus\{\un{0}\}$ and $\un{m}\in\bZ_+^d$, we exploit the product system structure of $X$ to define a $*$-homomorphism $\iota_\un{n}^{\un{n}+\un{m}} \colon \L(X_\un{n})\to\L(X_{\un{n}+\un{m}})$ by 
\[
\iota_\un{n}^{\un{n}+\un{m}}(S)=u_{\un{n},\un{m}}(S\otimes\text{id}_{X_\un{m}})u_{\un{n},\un{m}}^* \foral S \in \L(X_\un{n}).
\]
In turn, we obtain that
\[
\textup{$\iota_\un{n}^{\un{n}+\un{m}}(S)(\xi_\un{n}\xi_\un{m})=(S\xi_\un{n})\xi_\un{m}$ for all $\xi_\un{n}\in X_\un{n}$ and $\xi_\un{m}\in X_\un{m}$. }
\]
We also define a $*$-homomorphism $\iota_\un{0}^\un{m}\colon\K(A)\to\L(X_\un{m})$ by $\iota_\un{0}^\un{m}( \phi_\un{0}(a))=\phi_\un{m}(a)$ for all $a\in A$.
Moreover, we have that 
\[
\iota_\un{n}^\un{n}=\text{id}_{\L(X_\un{n})} \foral \un{n}\in \bZ_+^d\setminus\{\un{0}\} \qand \iota_\un{0}^\un{0}=\id_{\K(A)}.
\]

The theory of product systems includes that of C*-correspondences in the sense that every C*-correspondence $X$ over a C*-algebra $A$ can be viewed as the product system $\{X_n\}_{n \in \bZ_+}$ with 
\[
X_0:=A \qand X_n := X^{\otimes n} \foral n\in\bN,
\] 
and multiplication maps $u_{n,m}$ for $n,m \neq 0$ given by the natural inclusions.

A \emph{(Toeplitz) representation} $(\pi,t)$ of $X$ on $\B(H)$ consists of a family $\{(\pi,t_\un{n})\}_{\un{n}\in\bZ_+^d}$, where $(\pi,t_\un{n})$ is a representation of $X_\un{n}$ on $\B(H)$ for all $\un{n}\in\bZ_+^d$, $t_\un{0}=\pi$ and
\[ 
t_\un{n}(\xi_\un{n})t_\un{m}(\xi_\un{m})=t_{\un{n}+\un{m}}(\xi_\un{n}\xi_\un{m}) \foral \xi_\un{n}\in X_\un{n}, \xi_\un{m}\in X_\un{m} \; \text{and} \; \un{n}, \un{m}\in\bZ_+^d.
\]
We write $\psi_\un{n}$ for the induced $*$-representation of $\K(X_\un{n})$ for all $\un{n}\in\bZ_+^d$. 
We say that $(\pi,t)$ is \emph{injective} if $\pi$ is injective; in this case $t_\un{n}$ and $\psi_\un{n}$ are isometric for all $\un{n}\in\bZ_+^d$.
We denote the C*-algebra generated by $\pi(A)$ and every $t_\un{n}(X_\un{n})$ by $\ca(\pi,t)$.
We write $\T_X$ for the universal C*-algebra with respect to the Toeplitz representations of $X$, and refer to it as the \emph{Toeplitz algebra (of $X$)}.
Note that $\T_X$ is generated by a universal Toeplitz representation $(\pi_X,t_X)$, and its universal property is captured as follows: if $(\pi,t)$ is a representation of $X$, then there exists a (unique) canonical $*$-epimorphism $\pi\times t\colon\T_X\to\ca(\pi,t)$.
Here canonicity means that $(\pi\times t)(t_{X,\un{n}}(\xi_\un{n}))=t_\un{n}(\xi_\un{n})$ for all $\xi_\un{n}\in X_\un{n}$ and $\un{n}\in\bZ_+^d$.


In practice, oftentimes $\T_X$ is too large to be useful. 
Instead, we use the structure of $\bZ_+^d$ to impose additional structure on $X$ and then study the representations that preserve it.
This leads to the consideration of C*-algebras that are more tractable to analyse than $\T_X$.
More precisely, we say that $X$ is \emph{compactly aligned} if for all $\un{n},\un{m}\in \bZ_+^d\setminus\{\un{0}\}$ we have that
\[ 
\iota_\un{n}^{\un{n}\vee\un{m}}(\K(X_\un{n})) \iota_\un{m}^{\un{n}\vee\un{m}}(\K(X_\un{m}))\subseteq \K(X_{\un{n}\vee\un{m}}).
\]
Notice that we disregard the case where $\un{n}$ or $\un{m}$ equals $\un{0}$, as the compact alignment condition holds automatically in this case.
Likewise, the compact alignment condition holds automatically when $d=1$.

Fixing a compactly aligned product system $X$ over $\bZ_+^d$ with coefficients in a C*-algebra $A$, a representation $(\pi,t)$ of $X$ is said to be \emph{Nica-covariant} if for all $\un{n},\un{m}\in\bZ_+^d\setminus\{\un{0}\}, k_\un{n}\in\K(X_\un{n})$ and $k_\un{m}\in\K(X_\un{m})$, we have that
\[
\psi_\un{n}(k_\un{n})\psi_\un{m}(k_\un{m})=\psi_{\un{n}\vee\un{m}}(\iota_\un{n}^{\un{n}\vee\un{m}}(k_\un{n})\iota_\un{m}^{\un{n}\vee\un{m}}(k_\un{m})).
\]
We disregard the case where $\un{n}$ or $\un{m}$ equals $\un{0}$, as the Nica-covariance condition holds automatically in this case.
The Nica-covariance condition induces a Wick ordering on $\ca(\pi,t)$, e.g., \cite{DKKLL20, Fow02, KL19a, KL19b}.
More precisely, for $\un{n},\un{m}\in\bZ_+^d$, we have that
\[
t_\un{n}(X_\un{n})^*t_\un{m}(X_\un{m})\subseteq[t_{\un{n}'}(X_{\un{n}'})t_{\un{m}'}(X_{\un{m}'})^*], \; \text{where} \; \un{n}'=\un{n}\vee\un{m}-\un{n} \; \text{and} \; \un{m}'=\un{n}\vee\un{m}-\un{m},
\]
from which it follows that
\[
\ca(\pi,t)=\ol{\spn}\{t_\un{n}(X_\un{n})t_\un{m}(X_\un{m})^*\mid\un{n},\un{m}\in\bZ_+^d\}.
\]

We write $\N\T_X$ for the universal C*-algebra with respect to the Nica-covariant representations of $X$, and refer to it as the \emph{Toeplitz-Nica-Pimsner algebra (of $X$)}.
Since the Nica-covariance relations are graded, the existence of $\N\T_X$ and its universal property follow from the corresponding properties of $\T_X$.
We write $(\ol{\pi}_X, \ol{t}_X)$ for the \emph{universal Nica-covariant representation (of $X$)}.
If $(\pi,t)$ is a Nica-covariant representation of $X$, we will write (in a slight abuse of notation) $\pi \times t$ for the canonical $*$-epimorphism $\N\T_X \to \ca(\pi,t)$.
Since $\bZ_+^d$ is contained in the amenable discrete group $\bZ^d$, the C*-algebra $\N\T_X$ can also be realised concretely via a Fock space construction.
This property was exploited frequently in \cite{DeK24}, though we will not need it here.

We say that a Nica-covariant representation $(\pi,t)$ of $X$ \emph{admits a gauge action} $\ga$ if there exists a family $\{\ga_\un{z}\}_{\un{z}\in\bT^d}$ of $*$-endomorphisms of $\ca(\pi,t)$ satisfying
\[ 
\ga_\un{z}(\pi(a))=\pi(a) \foral a\in A \; \text{and} \; \ga_\un{z}(t_\un{n}(\xi_\un{n}))=\un{z}^\un{n}t_\un{n}(\xi_\un{n}) \foral \xi_\un{n}\in X_\un{n} \; \text{and} \; \un{n}\in\bZ_+^d\setminus\{\un{0}\}, 
\]
for each $\un{z}\in\bT^d$.
If $\un{z}=(z_1,\dots,z_d)\in\bT^d$ and $\un{n}=(n_1,\dots,n_d)\in\bZ_+^d$, then $\un{z}^\un{n}:=\prod_{j=1}^dz_j^{n_j}$.
When such a gauge action $\ga$ exists, it is necessarily unique. 
We also have that each $\ga_\un{z}$ is a $*$-automorphism, the family $\{\ga_\un{z}\}_{\un{z}\in\bT^d}$ is point-norm continuous, and we obtain a group homomorphism (also denoted by $\ga$) defined by
\[
\ga\colon\bT^d\to\text{Aut}(\ca(\pi,t)); \un{z}\mapsto\ga_\un{z}\foral \un{z}\in\bT^d.
\]
The universal Nica-covariant representation of $X$ admits a gauge action.
We say that an ideal $\mathfrak{J}\subseteq\ca(\pi,t)$ is \emph{gauge-invariant} or \emph{equivariant} if $\ga_\un{z}(\mathfrak{J})\subseteq\mathfrak{J}$ for all $\un{z}\in\bT^d$ (and so $\ga_\un{z}(\mathfrak{J})=\mathfrak{J}$ for all $\un{z}\in\bT^d$).

Given $\un{m},\un{m}'\in\bZ_+^d$ with $\un{m} \leq \un{m}'$, we write
\begin{align*}
B_{[\un{m}, \un{m}']}^{(\pi,t)} := \spn\{ \psi_{\un{n}}(\K(X_{\un{n}})) \mid \un{m} \leq \un{n} \leq \un{m}'\}
\qand
B_{(\un{m}, \un{m}']}^{(\pi,t)} := \spn\{ \psi_{\un{n}}(\K(X_{\un{n}})) \mid \un{m} < \un{n} \leq \un{m}'\}.
\end{align*}
These spaces are in fact C*-subalgebras of $\ca(\pi,t)$, e.g, \cite{CLSV11}.
By convention we take the linear span of $\mt$ to be $\{0\}$, so that $B^{(\pi,t)}_{(\un{m},\un{m}]}=\{0\}$ for all $\un{m}\in\bZ_+^d$.
We also define
\begin{align*}
B_{[\un{m}, \infty]}^{(\pi,t)} := \ol{\spn} \{ \psi_{\un{n}}(\K(X_{\un{n}})) \mid \un{m} \leq \un{n} \}
\qand
B_{(\un{m}, \infty]}^{(\pi,t)} := \ol{\spn} \{ \psi_{\un{n}}(\K(X_{\un{n}})) \mid \un{m} < \un{n} \}.
\end{align*}
We refer to these C*-algebras as the \emph{cores} of $(\pi,t)$. 
When $(\pi,t)$ admits a gauge action $\ga$, we have that
\[ 
B_{[\un{0},\infty]}^{(\pi,t)}=\ca(\pi,t)^\ga:=\{f\in\ca(\pi,t) \mid \ga_{\un{z}}(f)=f \foral \un{z}\in\bT^d\}, 
\]
where $\ca(\pi,t)^\ga$ is the \emph{fixed point algebra} of $\ca(\pi,t)$ (with respect to $\gamma$).

Describing the Cuntz-type object of $X$ is more challenging than in the case of a single C*-correspondence; see \cite{Seh18, SY10} for further details.
To alleviate this difficulty, we will make a further structural imposition on $X$, introduced by Dor-On and Kakariadis \cite{DK18}.
Let $X$ be a product system over $\bZ_+^d$ with coefficients in a C*-algebra $A$.
We say that $X$ is \emph{strong compactly aligned} if it is compactly aligned and satisfies
\begin{equation}\label{Eq:sca}
\iota_\un{n}^{\un{n}+\un{i}}(\K(X_\un{n}))\subseteq\K(X_{\un{n}+\un{i}}) 
\textup{ whenever $\un{n}\perp\un{i}$, where $i\in[d]$ and $\un{n}\in\bZ_+^d\setminus\{\un{0}\}$}.
\end{equation}
We disallow $\un{n}=\un{0}$, as then (\ref{Eq:sca}) would imply that the strong compactly aligned product systems are exactly the proper product systems over $\bZ_+^d$ (see \cite[Proposition 2.5.1]{DeK24} and Proposition \ref{P:propimpsca} to come). 
Note that (\ref{Eq:sca}) does not imply compact alignment (rather, a strong compactly aligned product system is \emph{a priori} assumed to be compactly aligned). 
Any C*-correspondence, when viewed as a product system over $\bZ_+$, is vacuously strong compactly aligned.
Not every strong compactly aligned product system is proper \cite[Example 7.4]{DK18}; however, every proper product system over $\bZ_+^d$ is strong compactly aligned.
More precisely, we have the following proposition.

\begin{proposition}\label{P:propimpsca}
Let $X$ be a proper product system over $\bZ_+^d$ with coefficients in a C*-algebra $A$.
Then $\iota_\un{n}^{\un{n}+\un{m}}(\K(X_\un{n}))\subseteq\K(X_{\un{n}+\un{m}})$ for all $\un{n},\un{m}\in\bZ_+^d$, and thus $X$ is strong compactly aligned.
\end{proposition}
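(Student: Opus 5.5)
The plan is to reduce everything to a standard fact about a single pair of C*-correspondences: if $X$ and $Y$ are C*-correspondences over $A$ with $\phi_Y(A)\subseteq\K(Y)$, then $S\otimes\id_Y\in\K(X\otimes_A Y)$ for every $S\in\K(X)$. By continuity and linearity of $S\mapsto S\otimes\id_Y$, it suffices to treat rank-one operators $S=\Theta_{\xi,\eta}$ with $\xi,\eta\in X$. Lemma \ref{L:lance} provides $\Theta_\xi\in\L(Y,X\otimes_AY)$ with $\Theta_\xi(y)=\xi\otimes y$ and $\Theta_\eta^*(x'\otimes y)=\phi_Y(\sca{\eta,x'})y$. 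Checking on simple tensors gives
\[
\Theta_{\xi,\eta}\otimes\id_Y=\Theta_\xi\,\Theta_\eta^*,
\]
since $\Theta_\xi\Theta_\eta^*(x'\otimes y)=\xi\otimes\phi_Y(\sca{\eta,x'})y=\xi\sca{\eta,x'}\otimes y$. Now factor $\xi=\xi'\sca{\xi',\xi'}^{\alpha}$ (Cohen factorisation, or $\xi=\lim \xi' a$ approximately), or more simply write $\xi=\lim_\la \xi\, e_\la$ for an approximate unit $(e_\la)$ of $A$. Then $\Theta_{\xi e_\la}=\Theta_\xi\,\phi_Y(e_\la)$. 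Since $\phi_Y(e_\la)\in\K(Y)$ and $\Theta_\xi$ is adjointable, the product $\Theta_\xi\phi_Y(e_\la)$ lies in $\K(Y,X\otimes_AY)$. Indeed $\Theta_\xi\Theta^Y_{y,y'}=\Theta_{\xi\otimes y,y'}$. Because $\|\Theta_{\xi e_\la}-\Theta_\xi\|\leq\|\xi e_\la-\xi\|\to0$ by Lemma \ref{L:lance}, $\Theta_\xi$ is itself a compact operator $Y\to X\otimes_AY$. Consequently $\Theta_\xi\Theta_\eta^*$ is a product of a compact operator with an adjointable one, hence lies in $\K(X\otimes_AY)$.

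With this in hand, I would split into cases. If $\un{n}=\un{0}$, then $\iota_{\un{0}}^{\un{m}}(\phi_{\un{0}}(a))=\phi_{\un{m}}(a)\in\K(X_{\un{m}})$ by properness; note that $\K(A)=\phi_{\un{0}}(A)$. If $\un{m}=\un{0}$, then $\iota_{\un{n}}^{\un{n}}$ is the identity. In the remaining case $\un{n},\un{m}\neq\un{0}$, the map $u_{\un{n},\un{m}}$ is a unitary, and $\iota_{\un{n}}^{\un{n}+\un{m}}(k)=u_{\un{n},\un{m}}(k\otimes\id_{X_{\un{m}}})u_{\un{n},\un{m}}^*$. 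Conjugation by a unitary of correspondences carries compacts to compacts, because $u\Theta_{x,y}u^*=\Theta_{ux,uy}$. So it is enough to apply the fact above with $X=X_{\un{n}}$ and $Y=X_{\un{m}}$, where $Y$ is proper by assumption.

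Finally, I would deduce strong compact alignment. Condition (\ref{Eq:sca}) is the special case $\un{m}=\un{i}$. For compact alignment, take $\un{p}=\un{n}\vee\un{m}$. The images $\iota_{\un{n}}^{\un{p}}(\K(X_{\un{n}}))$ and $\iota_{\un{m}}^{\un{p}}(\K(X_{\un{m}}))$ are already in $\K(X_{\un{p}})$, so their product is also in the ideal $\K(X_{\un{p}})$.

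I expect the only delicate point to be the compactness of $\Theta_\xi$, that is, the approximation argument through an approximate unit. It requires care because the fibres need not be non-degenerate. However, the argument only uses the right action, namely $\xi e_\la\to\xi$, which always holds, so it should go through.
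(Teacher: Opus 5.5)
Your proof is correct and follows the paper's route. The paper's one-line proof simply cites Proposition 4.7 of Lance's book, which is exactly the fact you derive from Lemma \ref{L:lance}: that $S\otimes\id_Y\in\K(X\otimes_A Y)$ for $S\in\K(X)$ whenever $\phi_Y(A)\subseteq\K(Y)$. Your case split ($\un{n}=\un{0}$, $\un{m}=\un{0}$, and conjugation by the unitary $u_{\un{n},\un{m}}$) is the bookkeeping the paper leaves implicit. Your approximate-unit argument for compactness of $\Theta_\xi$ is sound, and as you note it uses only the right action, so non-degeneracy of the fibres is not needed.
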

\begin{proof}
The result follows immediately by \cite[Proposition 4.7]{Lan95}.
\end{proof}

We will require some notation and results from \cite{DK18}.
Henceforth, we assume that $X$ is strong compactly aligned.
Firstly, strong compact alignment yields that
\[
\bigcap_{i\in F}\phi_{\un{i}}^{-1}(\K(X_{\un{i}}))=\bigcap\{\phi_{\un{n}}^{-1}(\K(X_{\un{n}}))\mid \un{0}\leq\un{n}\leq\un{1}_F \}\foral \mt\neq F\subseteq[d].
\]
For each $\mt\neq F\subseteq[d]$, we define
\[ 
\J_F :=
(\bigcap_{i\in F}\ker\phi_{\un{i}})^\perp\cap(\bigcap_{i\in [d]}\phi_{\un{i}}^{-1}(\K(X_{\un{i}})))\qand \J_\mt:=\{0\},
\]
which are ideals of $A$.
In turn, for each $F\subseteq [d]$, we define
\[ 
\I_F :=
\{a \in A \mid \sca{X_{\un{n}},aX_{\un{n}}}\subseteq \J_F \; \text{for all} \; \un{n}\perp F\}=\bigcap\{X_\un{n}^{-1}(\J_F)\mid \un{n}\perp F\}.
\]
In particular, we have that $\I_\mt=\{0\}$ and $\I_F \subseteq \J_F$ for all $F \subseteq [d]$.
The ideal $\I_F$ is the largest ideal in $\J_F$ that is $F^\perp$-invariant \cite[Proposition 2.7]{DK18}.
To avoid ambiguity, given two strong compactly aligned product systems $X$ and $Y$, we will denote the ideals $\J_F$ (resp. $\I_F$) for $X$ and $Y$ by $\J_F(X)$ and $\J_F(Y)$ (resp. $\I_F(X)$ and $\I_F(Y)$), respectively.


The ideals $\I_F$ emerge naturally when solving polynomial equations, originating in \cite{DFK17} in the case of C*-dynamical systems.
In order to make this precise, we require the following notation.
Following the conventions of \cite[Section 3]{DK18}, we introduce an approximate unit $(k_{\un{i},\la})_{\la\in\La}$ of $\K(X_{\un{i}})$ for each generator $\un{i}$ of $\bZ_+^d$.
Without loss of generality, we may assume that these approximate units are indexed by the same directed set $\La$, by replacing with their product.
Let $(\pi,t)$ be a Nica-covariant representation of $X$. 
Fixing $i \in [d]$, we define
\begin{equation*}
p_{\un{i},\la}:=\psi_{\un{i}}(k_{\un{i},\la}) \foral \la\in\La,\; \text{and} \; p_{\un{i}}:=\textup{w*-}\lim_\la p_{\un{i},\la},
\end{equation*}
i.e., $p_{\un{i}}$ is the projection on the space $[\psi_{\un{i}}(\K(X_{\un{i}}) ) H]$ for the Hilbert space $H$ on which $(\pi,t)$ acts.
In turn, we set
\begin{equation*}
q_\mt:= I_H
\text{ and }
q_F:=\prod_{i\in F}(I_H-p_{\un{i}}) \textup{ for all $\mt\neq F\subseteq [d]$}.
\end{equation*}
It should be noted that the projections $p_\un{i}$ commute \cite[Remark 2.5.8]{DeK24}, so there is no ambiguity regarding the order of the product defining each $q_F$.
Additionally, one can make sense of the projections $p_\un{i}$ even if $X$ is (just) compactly aligned.
We gather some algebraic relations proved in \cite{DeK24}.
Alternate proofs are provided in \cite{De24} which capitalise on the aforementioned commutativity of the projections $p_\un{i}$.

\begin{proposition}\label{P:sca ai} \cite[Proposition 2.4]{DK18}
Let $X$ be a strong compactly aligned product system with coefficients in a C*-algebra $A$.
Let $(k_{\un{i},\la})_{\la\in\La}$ be an approximate unit of $\K(X_\un{i})$ for all $i\in[d]$.
Fix $\mt\neq F \subseteq [d]$ and $\un{0} \neq \un{n} \in \bZ_+^d$, and set $\un{m} = \un{n} \vee \un{1}_F$.
Then the net $(e_{F,\la})_{\la\in\La}$ defined by
\[
e_{F, \la}: = \prod \{ \iota_{\un{i}}^{\un{1}_F}(k_{\un{i}, \la}) \mid i \in F \} \foral \la\in\La
\]
is contained in $\K(X_{\un{1}_F})$, and we have that
\begin{equation}\label{eq2-1}
\nor{\cdot}\text{-}\lim_\la \iota_{\un{1}_F}^{\un{m}}(e_{F, \la})\iota_{\un{n}}^{\un{m}}(k_{\un{n}}) 
= 
\iota_{\un{n}}^{\un{m}}(k_{\un{n}})  \foral k_\un{n} \in \K(X_{\un{n}}).
\end{equation}
Moreover, it follows that $\iota_{\un{n}}^{\un{m}}(k_{\un{n}}) \in \K(X_{\un{m}})$ for all $k_\un{n} \in \K(X_{\un{n}})$.
\end{proposition}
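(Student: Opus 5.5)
We need to prove Proposition \ref{P:sca ai}: $e_{F,\la}\in\K(X_{\un{1}_F})$, the limit \eqref{eq2-1}, and that $\iota_\un{n}^\un{m}(k_\un{n})\in\K(X_\un{m})$. The plan is an induction on $|F|$, with strong compact alignment \eqref{Eq:sca} doing the work at each step.

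\emph{Step 1: membership of $e_{F,\la}$.} I would show that $e_{F,\la}\in\K(X_{\un{1}_F})$ by induction on $|F|$. For $F=\{i\}$ there is nothing to prove. For $F=G\cup\{j\}$ with $j\notin G$, write
\[
e_{F,\la}=\iota_{\un{1}_G}^{\un{1}_F}(e_{G,\la})\,\iota_{\un{j}}^{\un{1}_F}(k_{\un{j},\la}),
\]
using that $\iota$ is multiplicative and that $\iota_{\un{1}_G}^{\un{1}_F}\circ\iota_{\un{i}}^{\un{1}_G}=\iota_{\un{i}}^{\un{1}_F}$ (associativity of the multiplication maps). Since $\un{1}_G\vee\un{j}=\un{1}_F$, compact alignment applied to $e_{G,\la}\in\K(X_{\un{1}_G})$ (induction hypothesis) and $k_{\un{j},\la}$ gives the claim. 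Alternatively, \eqref{Eq:sca} shows that each factor $\iota_{\un{1}_G}^{\un{1}_F}(e_{G,\la})$ is itself compact.

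\emph{Step 2: the limit \eqref{eq2-1}, again by induction on $|F|$.} For a single $i$, split into two cases.
\begin{itemize}
\item If $i\in\supp\un{n}$, then $\un{m}=\un{n}$. Writing $X_\un{n}\cong X_{\un{i}}\otimes X_{\un{n}-\un{i}}$, the operator $\iota_{\un{i}}^{\un{n}}(k_{\un{i},\la})$ acts on the first factor. For rank-ones $\Theta_{\xi\eta,\zeta}$ with $\xi\in X_\un{i}$ one gets $\iota(k_{\un{i},\la})\xi\eta=(k_{\un{i},\la}\xi)\eta\to\xi\eta$, because $k_{\un{i},\la}\xi\to\xi$ for any approximate unit of $\K(X_\un{i})$. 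By density of such rank-ones in $\K(X_\un{n})$ and uniform boundedness of the net, this yields norm convergence on all of $\K(X_\un{n})$.
\item If $i\notin\supp\un{n}$, then $\un{m}=\un{n}+\un{i}$ and $\un{n}\perp\un{i}$. By \eqref{Eq:sca}, $\iota_\un{n}^{\un{m}}(k_\un{n})\in\K(X_\un{m})$. Using the commutation of the factors $X_\un{n}\otimes X_\un{i}\cong X_\un{i}\otimes X_\un{n}$, write elements of $X_\un{m}$ as $\xi_\un{i}\xi_\un{n}$. Compactness lets us approximate $\iota_\un{n}^\un{m}(k_\un{n})$ by finite sums of $\Theta_{\xi_\un{i}\eta,\zeta}$, and on these $\iota_\un{i}^\un{m}(k_{\un{i},\la})$ again converges to the identity in norm.
\end{itemize}

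\emph{Inductive step.} For general $F=G\cup\{j\}$, set $\un{m}'=\un{n}\vee\un{1}_G$. Using
\[
\iota_{\un{1}_F}^{\un{m}}(e_{F,\la})=\iota_{\un{m}'}^{\un{m}}\big(\iota_{\un{1}_G}^{\un{m}'}(e_{G,\la})\big)\,\iota_{\un{j}}^{\un{m}}(k_{\un{j},\la}),
\]
the $j$-factor commutes past (since $\iota$'s of different coordinates commute as operators), and the single-coordinate case moves $\un{n}$ up to $\un{m}'$, respectively $\un{m}$.

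\emph{Main obstacle.} The hard part is that the nets in $G$ and $j$ share the index $\la$ and must be handled jointly rather than as iterated limits. I would use uniform boundedness, $\|e_{G,\la}\|\le1$, together with the $\varepsilon/3$ estimate
\[
\|abx-x\|\le\|a(bx-x)\|+\|ax-x\|.
\]
The ``moreover'' statement then follows because $\iota_{\un{1}_F}^\un{m}(e_{F,\la})$ should be compact by Step 1 and \eqref{Eq:sca}, iterated over the coordinates of $\supp\un{n}\setminus F$. Hence $\iota_\un{n}^\un{m}(k_\un{n})$ is a norm limit of compacts (compact times adjointable), and so is compact.
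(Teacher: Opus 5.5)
The paper quotes this result from \cite[Proposition 2.4]{DK18} without proof, so your proposal has to stand on its own. Its core for \eqref{eq2-1} is sound.

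Both single-coordinate cases are right:
\begin{itemize}
\item for $i\in\supp\un{n}$, because $X_\un{n}=[X_\un{i}X_{\un{n}-\un{i}}]$, so $\iota_\un{i}^\un{n}(k_{\un{i},\la})k\to k$ for $k\in\K(X_\un{n})$;
\item for $i\notin\supp\un{n}$, because \eqref{Eq:sca} puts $\iota_\un{n}^{\un{n}+\un{i}}(k_\un{n})$ in $\K(X_{\un{n}+\un{i}})$ and $X_{\un{n}+\un{i}}=[X_\un{i}X_\un{n}]$.
\end{itemize}
These cases transfer to $X_\un{m}$ by applying the contractive $*$-homomorphism $\iota_{\un{n}\vee\un{i}}^{\un{m}}$, using $\iota_{\un{n}\vee\un{i}}^{\un{m}}\circ\iota_{\un{k}}^{\un{n}\vee\un{i}}=\iota_{\un{k}}^{\un{m}}$. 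The telescoping bound $\nor{Q_1\cdots Q_rZ-Z}\leq\sum_s\nor{Q_sZ-Z}$ for contractions $Q_s$ then handles the shared index $\la$, for any ordering of the product. Step 1 is also fine. However, two of your claims are false, and the second one breaks the final step.

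\textbf{First error: commutation.} The $\iota$'s of different coordinates do \emph{not} commute in general. Take the single-vertex $2$-graph with edges $e_1,e_2$ of degree $(1,0)$, edges $f_1,f_2$ of degree $(0,1)$, and factorisation rules $e_af_1=f_1e_a$, $e_1f_2=f_2e_2$, $e_2f_2=f_2e_1$. Let $S_1,S_2$ be the lifts to $X_{(1,1)}$ of $\Theta_{\de_{e_1},\de_{e_2}}$ and $\Theta_{\de_{f_1},\de_{f_2}}$. Then $S_1S_2\de_{e_2f_2}=0\neq\de_{e_2f_1}=S_2S_1\de_{e_2f_2}$. This is why the paper stresses that \eqref{eq2-1} does not depend on the order of the product defining $e_{F,\la}$. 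The error is harmless, because your telescoping estimate never uses commutation, so simply delete that remark.

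\textbf{Second error: the ``moreover'' part.} This is a genuine gap. Your argument rests on $\iota_{\un{1}_F}^{\un{m}}(e_{F,\la})$ being compact, which fails in general. Condition \eqref{Eq:sca} only lets you pass from a degree $\un{k}$ to $\un{k}+\un{i}$ when $i\notin\supp\un{k}$. Starting from $\un{1}_F$, it therefore cannot reach $\un{m}$ once some $n_i\geq 2$: either a coordinate of $F$ must be raised, or a coordinate outside $F$ must be added twice.

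For a concrete counterexample, take $d=1$, $A=\bC$, and $X_1=H$ an infinite-dimensional Hilbert space with scalar left action; this is vacuously strong compactly aligned. With $F=\{1\}$ and $\un{n}=2$ we get $\un{m}=2$, and $\iota_{1}^{2}(k_{1,\la})=k_{1,\la}\otimes\id_H$ on $H\otimes H$. This operator is not compact once $k_{1,\la}\neq 0$.

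The repair is to apply compact alignment to the product rather than to $e_{F,\la}$ alone. Since $\un{1}_F\vee\un{n}=\un{m}$, we have
\[
\iota_{\un{1}_F}^{\un{m}}(e_{F,\la})\iota_\un{n}^\un{m}(k_\un{n})\in\iota_{\un{1}_F}^{\un{m}}(\K(X_{\un{1}_F}))\iota_\un{n}^\un{m}(\K(X_\un{n}))\subseteq\K(X_\un{m}),
\]
and \eqref{eq2-1} exhibits $\iota_\un{n}^\un{m}(k_\un{n})$ as a norm-limit of these compacts.

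Alternatively, iterate \eqref{Eq:sca} starting from $\un{n}$ (not $\un{1}_F$), along the coordinates of $F\setminus\supp\un{n}$ (not $\supp\un{n}\setminus F$). Since $\un{m}=\un{n}+\un{1}_{F\setminus\supp\un{n}}$, each added coordinate lies outside the support of the current degree, exactly as in your own base case.
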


It should be noted that (\ref{eq2-1}) holds independently of the order of the product defining $e_{F,\la}$.

\begin{proposition}\label{P:pf reducing}\cite[Proposition 4.4]{DK18}
Let $X$ be a strong compactly aligned product system with coefficients in a C*-algebra $A$.
Let $(\pi,t)$ be a Nica-covariant representation of $X$ and fix $F \subseteq [d]$.
Then for all $\un{m}\in\bZ_+^d$ and $\xi_{\un{m}} \in X_{\un{m}}$, we have that
\[
 q_Ft_{\un{m}}(\xi_{\un{m}})
=
\begin{cases}
t_{\un{m}}(\xi_{\un{m}})q_F & \text{ if } \un{m} \perp F, \\
0 & \text{ if } \un{m} \not\perp F,
\end{cases}
\]
so that in particular $q_F \in \pi(A)'$.
\end{proposition}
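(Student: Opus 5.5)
The statement is Proposition \ref{P:pf reducing}. Since $q_\mt=I_H$ and $\un{m}\perp\mt$ always, the case $F=\mt$ is trivial. For $F\neq\mt$ I will reduce everything to single generators. The projections $p_{\un{i}}$ commute, so $q_F=\prod_{i\in F}(I_H-p_{\un{i}})$. It therefore suffices to prove, for a fixed $i\in[d]$ and $\xi_\un{m}\in X_\un{m}$, the following two claims:
\begin{itemize}
\item[(a)] if $m_i=0$, then $p_{\un{i}}t_\un{m}(\xi_\un{m})=t_\un{m}(\xi_\un{m})p_{\un{i}}$;
\item[(b)] if $m_i\geq1$, then $p_{\un{i}}t_\un{m}(\xi_\un{m})=t_\un{m}(\xi_\un{m})$.
\end{itemize}
Given these, if $\un{m}\perp F$ every factor of $q_F$ commutes with $t_\un{m}(\xi_\un{m})$. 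If instead some $i\in F$ lies in $\supp\un{m}$, I commute the factor $(I_H-p_{\un{i}})$ to the right end of the product of commuting projections; it then annihilates $t_\un{m}(\xi_\un{m})$ by (b). Taking $\un{m}=\un{0}$ gives $q_F\in\pi(A)'$.

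For (b), write $\xi_\un{m}$ as a limit of sums of products $\xi_\un{i}\xi_{\un{m}-\un{i}}$, using the unitary $u_{\un{i},\un{m}-\un{i}}$. Then $t_\un{m}(\xi_\un{m})$ lies in the closed span of $t_\un{i}(X_\un{i})t_{\un{m}-\un{i}}(X_{\un{m}-\un{i}})$. Since $[\psi_\un{i}(\K(X_\un{i}))t_\un{i}(X_\un{i})]=t_\un{i}(X_\un{i})$ (because $\Theta_{\xi,\eta}\zeta=\xi\sca{\eta,\zeta}$ and $X_\un{i}=X_\un{i}\sca{X_\un{i},X_\un{i}}$), the range of $t_\un{i}(\xi_\un{i})$ lies in $[\psi_\un{i}(\K(X_\un{i}))H]$. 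Hence $p_{\un{i}}t_\un{i}(\xi_\un{i})=t_\un{i}(\xi_\un{i})$, and (b) follows.

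For (a), note that $p_{\un{i}}$ is the strong limit of $\psi_\un{i}(k_{\un{i},\la})$, so it suffices to prove
\[
\psi_\un{i}(k)t_\un{m}(\xi_\un{m})=t_\un{m}(\iota_\un{i}^{\un{i}+\un{m}}(k)\text{-type element})\cdots
\]
Concretely, I use the Nica-covariance/Wick ordering to compare the ranges. The key point is to show that $t_\un{m}(\xi_\un{m})$ maps $[\psi_\un{i}(\K(X_\un{i}))H]$ into itself, and that $t_\un{m}(\xi_\un{m})^*$ does likewise; that gives commutation with the projection.

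For $t_\un{m}(\xi_\un{m})t_\un{i}(\xi_\un{i})=t_{\un{m}+\un{i}}(\xi_\un{m}\xi_\un{i})$, I use commutativity of $\bZ_+^d$: $X_{\un{m}+\un{i}}\cong X_\un{i}\otimes X_\un{m}$. So this element lies in $[t_\un{i}(X_\un{i})t_\un{m}(X_\un{m})]$, whose range is in $p_{\un{i}}H$. For the adjoint, $t_\un{m}(\xi_\un{m})^*t_\un{i}(\xi_\un{i})$ lies in $[t_\un{i}(X_\un{i})t_\un{m}(X_\un{m})^*]$ by the Wick ordering, since $\un{m}\vee\un{i}-\un{m}=\un{i}$ when $\un{m}\perp\un{i}$. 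Its range again lies in $p_{\un{i}}H$. So $p_{\un{i}}H$ is reducing for $t_\un{m}(\xi_\un{m})$, which gives (a).

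The main obstacle is the adjoint step in (a): it relies on the Wick ordering, i.e. on Nica covariance. The rest is routine bookkeeping of closed spans.
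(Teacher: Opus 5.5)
Your argument is correct. The paper does not prove this proposition itself: it quotes \cite[Proposition 4.4]{DK18}, and only remarks that alternative proofs in \cite{De24} exploit the commutativity of the projections $p_{\un{i}}$.

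Your proof is a self-contained argument of that latter kind. You reduce to one generator at a time. You get (b) from $t_{\un{i}}(X_{\un{i}})\subseteq[\psi_{\un{i}}(\K(X_{\un{i}}))t_{\un{i}}(X_{\un{i}})]$. You get (a) by showing that $p_{\un{i}}H$ reduces $t_{\un{m}}(\xi_{\un{m}})$. Nica covariance enters only through the Wick ordering $t_{\un{m}}(X_{\un{m}})^*t_{\un{i}}(X_{\un{i}})\subseteq[t_{\un{i}}(X_{\un{i}})t_{\un{m}}(X_{\un{m}})^*]$ for $\un{m}\perp\un{i}$.

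This route buys some economy. Strong compact alignment is never used; compact alignment suffices, consistent with the paper's remark that the $p_{\un{i}}$ make sense in that generality. You also need not import the commutativity of the $p_{\un{i}}$. Applying (a) with $\un{m}=\un{j}$ for $j\neq i$ shows that $p_{\un{i}}$ commutes with $t_{\un{j}}(X_{\un{j}})$ and with its adjoints. Hence it commutes with $\psi_{\un{j}}(\K(X_{\un{j}}))$ and with its strong limit $p_{\un{j}}$.

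Two small clean-ups are needed. First, delete the half-written display at the start of (a), which is not a meaningful formula. Second, state explicitly that $p_{\un{i}}H=[t_{\un{i}}(X_{\un{i}})H]$. One inclusion is your argument for (b); the other holds because $\psi_{\un{i}}(\Theta_{\xi,\eta})=t_{\un{i}}(\xi)t_{\un{i}}(\eta)^*$. This equality is what justifies testing invariance of $p_{\un{i}}H$ only on vectors of the form $t_{\un{i}}(\xi_{\un{i}})h$.
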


\begin{proposition}\label{P:prod cai} \cite[Section 3]{DK18}
Let $X$ be a strong compactly aligned product system with coefficients in a C*-algebra $A$ and let $(\pi,t)$ be a Nica-covariant representation of $X$.
Fixing $\mt \neq F \subseteq [d]$, we have that
\[
\nor{\cdot}\text{-}\lim_\la  \psi_{\un{n}}(k_{\un{n}}) \prod_{i \in F} p_{\un{i}, \la}
=
\psi_{\un{n}}(k_{\un{n}}) \prod_{i \in F} p_{\un{i}}
\foral \un{n}\in\bZ_+^d\setminus\{\un{0}\} \; \text{and} \; k_{\un{n}} \in \K(X_{\un{n}}).
\]
If $a \in \bigcap\{ \phi_{\un{i}}^{-1}( \K(X_{\un{i}})) \mid i \in F \}$, then 
\[
\pi(a) \prod_{i \in D} p_{\un{i}} = \nor{\cdot}\text{-}\lim_\la\pi(a)\prod_{i\in D}p_{\un{i},\la}=\psi_{\un{1}_D}(\phi_{\un{1}_D}(a)) \foral \mt\neq D \subseteq F,
\]
and so
\[
\pi(a)q_F
=
\pi(a) + \sum \{ (-1)^{|\un{n}|} \psi_{\un{n}}(\phi_{\un{n}}(a)) \mid \un{0} \neq \un{n} \leq \un{1}_F \}
\in \ca(\pi,t).
\]
\end{proposition}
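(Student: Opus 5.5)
The plan is to prove the three assertions in order, using only Nica-covariance, Proposition \ref{P:sca ai}, and the fact that on bounded sets multiplication is jointly strong-operator continuous. The last fact lets us identify a norm limit with a product of strong limits.

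\emph{First assertion.} Fix $\un{n}\neq\un{0}$, $k_\un{n}\in\K(X_\un{n})$ and $\mt\neq F\subseteq[d]$, and set $\un{m}=\un{n}\vee\un{1}_F$.
\begin{enumerate}
\item Apply Nica-covariance once for each $i\in F$, iteratively. At each step the product $\psi_{\un{k}}(\cdot)\psi_{\un{i}}(k_{\un{i},\la})$ becomes $\psi_{\un{k}\vee\un{i}}(\iota_{\un{k}}^{\un{k}\vee\un{i}}(\cdot)\,\iota_{\un{i}}^{\un{k}\vee\un{i}}(k_{\un{i},\la}))$. Compact alignment keeps all intermediate operators compact, and functoriality $\iota_{\un{k}}^{\un{m}}\iota_{\un{l}}^{\un{k}}=\iota_{\un{l}}^{\un{m}}$ applies. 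The result is
\[
\psi_\un{n}(k_\un{n})\prod_{i\in F}p_{\un{i},\la}=\psi_{\un{m}}\big(\iota_\un{n}^{\un{m}}(k_\un{n})\,\iota_{\un{1}_F}^{\un{m}}(e_{F,\la})\big).
\]
\item Take adjoints in (\ref{eq2-1}), applied to $k_\un{n}^*$. This shows the right-hand side converges in norm to $\psi_\un{m}(\iota_\un{n}^\un{m}(k_\un{n}))$.
\item Since $p_{\un{i},\la}\to p_\un{i}$ strongly and the nets are bounded, the left-hand side converges strongly to $\psi_\un{n}(k_\un{n})\prod_{i\in F}p_\un{i}$. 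The norm limit and the strong limit must coincide, which proves the first claim. The $p_\un{i}$ commute, so the order of the product is irrelevant.
\end{enumerate}

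\emph{Second assertion.} Fix $\mt\neq D\subseteq F$, choose $i_1\in D$, and put $D'=D\setminus\{i_1\}$. For $a$ with $\phi_{\un{i}_1}(a)$ compact we have $\pi(a)p_{\un{i}_1,\la}=\psi_{\un{i}_1}(\phi_{\un{i}_1}(a)k_{\un{i}_1,\la})$, which tends in norm to $\psi_{\un{i}_1}(\phi_{\un{i}_1}(a))$. Write
\[
\pi(a)\prod_{i\in D}p_{\un{i},\la}=\big(\pi(a)p_{\un{i}_1,\la}-\psi_{\un{i}_1}(\phi_{\un{i}_1}(a))\big)\prod_{D'}p_{\un{i},\la}+\psi_{\un{i}_1}(\phi_{\un{i}_1}(a))\prod_{D'}p_{\un{i},\la}.
\]
\begin{enumerate}
\item The first term tends to $0$ in norm, because the remaining factors are contractions.
\item If $D'=\mt$, the second term is already $\psi_{\un{i}_1}(\phi_{\un{i}_1}(a))=\psi_{\un{1}_D}(\phi_{\un{1}_D}(a))$.
\item If $D'\neq\mt$, apply step (i) above with $\un{n}=\un{i}_1$ and $\un{m}=\un{1}_D$. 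The second term becomes $\psi_{\un{1}_D}\big(\iota_{\un{i}_1}^{\un{1}_D}(\phi_{\un{i}_1}(a))\,\iota_{\un{1}_{D'}}^{\un{1}_D}(e_{D',\la})\big)$.
\item We have $\iota_{\un{i}_1}^{\un{1}_D}(\phi_{\un{i}_1}(a))=\phi_{\un{1}_D}(a)$, by the identity $\phi_{\un{n}+\un{m}}(a)(\xi_\un{n}\xi_\un{m})=(\phi_\un{n}(a)\xi_\un{n})\xi_\un{m}$.
\item This operator is compact. Indeed $\un{i}_1\perp\un{1}_{D'}$, and iterating (\ref{Eq:sca}) shows that $\iota_{\un{i}_1}^{\un{1}_D}(\K(X_{\un{i}_1}))\subseteq\K(X_{\un{1}_D})$.
\item By (\ref{eq2-1}) with $F=D'$ and $\un{n}=\un{i}_1$, used in adjoint form, the second term then converges in norm to $\psi_{\un{1}_D}(\phi_{\un{1}_D}(a))$.
\end{enumerate}
The strong-limit argument from the first assertion identifies this norm limit with $\pi(a)\prod_{i\in D}p_\un{i}$.

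\emph{Third assertion.} Expand $q_F=\prod_{i\in F}(I_H-p_\un{i})=\sum_{D\subseteq F}(-1)^{|D|}\prod_{i\in D}p_\un{i}$, which uses commutativity of the $p_\un{i}$. Multiply on the left by $\pi(a)$ and apply the second assertion to each $D\neq\mt$. The elements $\un{n}=\un{1}_D$ are exactly those with $\un{0}\neq\un{n}\leq\un{1}_F$ in the indexing sense required, and $|\un{1}_D|=|D|$, which gives the displayed formula. Each term lies in $\ca(\pi,t)$.

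The main obstacle is the bookkeeping in the iterated Nica-covariance computation. One must check that the intermediate products stay compact; this is where strong compact alignment and Proposition \ref{P:sca ai} enter. One must also check that the norm limit produced there really equals the weak$^*$-defined projection product, which the joint strong continuity argument settles.
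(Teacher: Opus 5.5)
Your proposal is correct. The paper states this result without proof, quoting \cite[Section 3]{DK18}, and your argument is the standard one behind that citation: iterated Nica-covariance rewrites $\psi_{\un{n}}(k_{\un{n}})\prod_{i\in F}p_{\un{i},\la}$ as $\psi_{\un{n}\vee\un{1}_F}\big(\iota_{\un{n}}^{\un{n}\vee\un{1}_F}(k_{\un{n}})\,\iota_{\un{1}_F}^{\un{n}\vee\un{1}_F}(e_{F,\la})\big)$, Proposition \ref{P:sca ai} gives norm convergence, and the bounded strong limit identifies the result with the product of the $p_{\un{i}}$. Two points you leave implicit are harmless: taking adjoints in (\ref{eq2-1}) reverses the order of the factors of $e_{F,\la}$, which is fine because approximate units are self-adjoint and (\ref{eq2-1}) is order-independent; and $p_{\un{i},\la}\to p_{\un{i}}$ strongly, not just weak*, because an approximate unit of a C*-subalgebra $B\subseteq\B(H)$ converges strongly to the projection onto $[BH]$.
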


\begin{proposition}\label{P:DK3.3}\cite[Proposition 3.3]{DK18}
Let $X$ be a strong compactly aligned product system with coefficients in a C*-algebra $A$.
Suppose that $(\pi,t)$ is a Nica-covariant representation of $X$ and fix $a\in A$.
If there exist $\un{m}\in\bZ_+^d\setminus\{\un{0}\}$ and $k_\un{n}\in\K(X_\un{n})$ for each $\un{0}\neq\un{n}\leq\un{m}$ such that
\[
\pi(a)+\sum\{\psi_\un{n}(k_\un{n})\mid\un{0}\neq\un{n}\leq\un{m}\}=0,
\]
then we have that
\begin{equation*}
\pi(a)q_F=0 \; \text{for} \; F:=\supp\un{m}.
\end{equation*}
\end{proposition}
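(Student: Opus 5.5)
The plan is to multiply the given relation on the right by $q_F$, where $F=\supp\un{m}$, and show that every compact term is annihilated. Then only $\pi(a)q_F$ survives, and it must vanish.

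\emph{Indices.} Every $\un{n}$ with $\un{0}\neq\un{n}\leq\un{m}$ satisfies $\mt\neq\supp\un{n}\subseteq\supp\un{m}=F$. Hence $\supp\un{n}\cap F\neq\mt$, that is, $\un{n}\not\perp F$.

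\emph{$q_F$ is a projection.} The $p_{\un{i}}$ are commuting projections (\cite[Remark 2.5.8]{DeK24}), so $q_F=\prod_{i\in F}(I_H-p_{\un{i}})$ is a projection. In particular $q_F^*=q_F$.

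\emph{Rank-one terms.} Fix $\un{0}\neq\un{n}\leq\un{m}$ and $\xi,\eta\in X_{\un{n}}$. Since $\un{n}\not\perp F$, Proposition \ref{P:pf reducing} gives $q_F t_{\un{n}}(\eta)=0$. Taking adjoints,
\[
\psi_{\un{n}}(\Theta_{\xi,\eta})q_F=t_{\un{n}}(\xi)t_{\un{n}}(\eta)^*q_F=t_{\un{n}}(\xi)\big(q_Ft_{\un{n}}(\eta)\big)^*=0 .
\]

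\emph{All compacts.} The rank-one operators densely span $\K(X_{\un{n}})$, and $\psi_{\un{n}}$ is linear and contractive. By linearity and norm continuity, $\psi_{\un{n}}(k_{\un{n}})q_F=0$ for every $k_{\un{n}}\in\K(X_{\un{n}})$.

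\emph{Conclusion.} Multiplying $\pi(a)+\sum\{\psi_{\un{n}}(k_{\un{n}})\mid\un{0}\neq\un{n}\leq\un{m}\}=0$ on the right by $q_F$ kills every summand except the first, so $\pi(a)q_F=0$.

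There is no real obstacle here; all the substance sits in Proposition \ref{P:pf reducing}. The only points needing care are the passage to adjoints, which requires $q_F$ to be self-adjoint, and checking that $\un{n}\not\perp F$ for every index in the sum, which is exactly why $F$ is chosen as $\supp\un{m}$.
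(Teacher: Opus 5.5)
Your proof is correct. The paper does not prove this statement itself; it imports it from \cite[Proposition 3.3]{DK18}. Your route is the natural one: right-multiply the relation by the projection $q_F$, and use Proposition \ref{P:pf reducing} to annihilate every $\psi_{\un{n}}(k_{\un{n}})$, since each $\un{0}\neq\un{n}\leq\un{m}$ satisfies $\un{n}\not\perp F$.

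One point is worth noting. In \cite{DK18} the reducing property of $q_F$ appears after this result, as Proposition 4.4, but there is no circularity, because the vanishing half you use can be checked directly. For $i\in\supp\un{n}$ we have $t_{\un{n}}(X_{\un{n}})\subseteq[t_{\un{i}}(X_{\un{i}})t_{\un{n}-\un{i}}(X_{\un{n}-\un{i}})]$ and $t_{\un{i}}(\xi_{\un{i}})=\lim_\la p_{\un{i},\la}t_{\un{i}}(\xi_{\un{i}})$. Hence $(I_H-p_{\un{i}})t_{\un{n}}(\xi_{\un{n}})=0$, and commutativity of the $p_{\un{j}}$ gives $q_Ft_{\un{n}}(\xi_{\un{n}})=0$.
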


The following proposition justifies the usage of the family $\I$.

\begin{proposition}\label{P:DK3.4}\cite[Proposition 3.4]{DK18}
Let $X$ be a strong compactly aligned product system with coefficients in a C*-algebra $A$.
Suppose that $(\pi,t)$ is an injective Nica-covariant representation of $X$ and fix $a\in A$ and $\un{m}\in\bZ_+^d$.
If $\pi(a)\in B_{(\un{0},\un{m}]}^{(\pi,t)}$, then $a\in\I_F$ for $F:=\supp\un{m}$.
\end{proposition}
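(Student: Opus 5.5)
The plan is to take a representative $\pi(a)=\sum\{\psi_\un{n}(k_\un{n})\mid \un{0}\neq\un{n}\leq\un{m}\}$ with $k_\un{n}\in\K(X_\un{n})$. Since $B_{(\un{0},\un{m}]}^{(\pi,t)}$ is by definition a finite span that is already a C*-algebra, such an exact finite expression exists. The goal $a\in\I_F$ splits into two tasks:
\begin{enumerate}
\item show $a\in\J_F$;
\item show that $\sca{X_\un{r},aX_\un{r}}\subseteq\J_F$ for every $\un{r}\perp F$.
\end{enumerate}
I would do (ii) first by reducing it to (i): show that the hypothesis ``$\pi(b)\in B_{(\un{0},\un{m}]}^{(\pi,t)}$'' is inherited by every $b\in\sca{X_\un{r},aX_\un{r}}$ with $\un{r}\perp F$. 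Then (i), applied to such $b$, gives (ii). It therefore suffices to prove (i) for an arbitrary $a$ satisfying the hypothesis.

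\emph{Step 1 (invariance of the hypothesis).} Fix $\un{r}\perp F=\supp\un{m}$ and $\xi,\eta\in X_\un{r}$. Then
\[
\pi(\sca{\eta,a\xi})=t_\un{r}(\eta)^*\pi(a)t_\un{r}(\xi)=\sum_{\un{0}\neq\un{n}\leq\un{m}} t_\un{r}(\eta)^*\psi_\un{n}(k_\un{n})t_\un{r}(\xi).
\]
Since $\un{n}\perp\un{r}$, Proposition \ref{P:propimpsca} gives $\iota_\un{n}^{\un{n}+\un{r}}(k_\un{n})\in\K(X_{\un{n}+\un{r}})$. Nica covariance, via the Wick ordering, then shows that $t_\un{r}(\eta)^*\psi_\un{n}(k_\un{n})t_\un{r}(\xi)$ lies in $\psi_\un{n}(\K(X_\un{n}))$. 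Concretely, one writes $\psi_\un{n}(k_\un{n})t_\un{r}(\xi)=t_{\un{n}+\un{r}}(\cdot)t_\un{n}(\cdot)^*$-type sums using $X_{\un{n}+\un{r}}\cong X_\un{r}\otimes X_\un{n}$, and then absorbs $t_\un{r}(\eta)^*$. Hence $\pi(\sca{\eta,a\xi})\in B^{(\pi,t)}_{(\un{0},\un{m}]}$. Closing linear spans gives the same for all of $[\sca{X_\un{r},aX_\un{r}}]$, because $\pi$ is injective and the core is closed.

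\emph{Step 2 (the annihilator part of $\J_F$).} Let $b\in\bigcap_{i\in F}\ker\phi_\un{i}$. Every $\un{0}\neq\un{n}\leq\un{m}$ has $\supp\un{n}\subseteq F$ nonempty, so $\un{n}=\un{i}+\un{n}'$ with $i\in F$. Hence $\phi_\un{n}(b)=\iota^{\un{n}}_{\un{i}}(\phi_\un{i}(b))=0$, and so $\pi(b)\psi_\un{n}(k_\un{n})=\psi_\un{n}(\phi_\un{n}(b)k_\un{n})=0$. Therefore $\pi(ba)=0$, and injectivity gives $ba=0$. Thus $a\in(\bigcap_{i\in F}\ker\phi_\un{i})^\perp$.

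\emph{Step 3 (compactness: $\phi_\un{i}(a)\in\K(X_\un{i})$ for all $i\in[d]$).} I expect this to be the main obstacle. For $\psi_\un{i}(\phi_\un{i}(a)\Theta_{\xi,\zeta})=\pi(a)t_\un{i}(\xi)t_\un{i}(\zeta)^*$ I would expand $\pi(a)t_\un{i}(\xi)$ using the representation of $\pi(a)$, sorting the terms into two cases.
\begin{enumerate}
\item Terms with $n_i\geq1$ have the form $t_\un{i}(\cdot)\,(\text{core element})$.
\item Terms with $n_i=0$ become $\psi_{\un{n}+\un{i}}(\iota_\un{n}^{\un{n}+\un{i}}(k_\un{n}))t_\un{i}(\xi)$ by strong compact alignment.
\end{enumerate}
The aim is to exhibit $t_\un{i}(\phi_\un{i}(a)\xi)$ as $t_\un{i}(K\xi)$ for a compact $K$ built from the degree-$\un{i}$ pieces, up to terms supported in degrees $>\un{i}$. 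To separate degrees I would first pass, via the universal property, to the Fock representation, which is injective and Nica-covariant. There $X_\un{i}$ sits as a direct summand of the Fock space, so compressing to the summands $X_\un{0}\to X_\un{i}$ kills all higher-degree contributions. This identifies $\phi_\un{i}(a)$ with a finite sum of compact operators of the form $\iota_\un{n}^\un{i}(k_\un{n})$, together with (for $i\notin F$) vanishing contributions.

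The step I am least sure of is whether the compression argument transfers back to an arbitrary injective $(\pi,t)$. This would need to be justified either through a conditional-expectation/faithfulness argument on $B^{(\pi,t)}_{[\un{0},\infty]}$ or by noting that the membership statement depends only on $a$ and $X$. If that fails, my fallback is to apply Proposition \ref{P:DK3.3} to get $\pi(a)q_F=0$ and expand $\pi(a)=\pi(a)(I-q_F)$ via Proposition \ref{P:prod cai}.
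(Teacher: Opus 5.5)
Your Steps 1 and 2 are sound, and reducing $a\in\I_F$ to $\J_F$-membership of every element of $\sca{X_\un{r},aX_\un{r}}$, $\un{r}\perp F$, is the right architecture. (In Step 1 you cite Proposition \ref{P:propimpsca}, which is stated only for proper $X$. You do not need it: for $\un{n}\perp\un{r}$ the Wick ordering alone gives $t_\un{r}(X_\un{r})^*\psi_\un{n}(\K(X_\un{n}))t_\un{r}(X_\un{r})\subseteq[t_\un{n}(X_\un{n})\pi(A)t_\un{n}(X_\un{n})^*]\subseteq\psi_\un{n}(\K(X_\un{n}))$.)

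The genuine gap is Step 3. You cannot pass ``via the universal property'' to the Fock representation. The universal property gives a canonical $*$-epimorphism from $\N\T_X$, which is what the Fock representation realises, onto $\ca(\pi,t)$. So the relation $\pi(a)=\sum\psi_\un{n}(k_\un{n})$, which holds in the quotient, does not lift. In fact, in the Fock representation the hypothesis forces $a=0$: compressing to the summand $X_\un{0}=A$ turns the image of $a$ into left multiplication by $a$ and kills every $\psi_\un{n}(k_\un{n})$ with $\un{n}\neq\un{0}$. Noting that the conclusion depends only on $a$ and $X$ does not help, because the hypothesis depends on $(\pi,t)$. Your fallback is circular: the expansion of $\pi(a)q_F$ in Proposition \ref{P:prod cai} presupposes $a\in\bigcap_{i\in F}\phi_\un{i}^{-1}(\K(X_\un{i}))$, and it says nothing for $i\notin F$. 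Even the heuristic of isolating ``the degree-$\un{i}$ piece'' fails, because a representation may identify degrees. For example, for a regular C*-correspondence one has $\psi_1(k)=\psi_2(\iota_1^2(k))$ in $\O_X$, so the higher-degree summands genuinely contribute to $\phi_\un{i}(a)$.

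The missing idea is to multiply by an approximate unit $(k_{\un{i},\la})_\la$ of $\K(X_\un{i})$ rather than by $t_\un{i}(\xi)$. For every $i\in[d]$, Nica covariance gives
\[
\psi_\un{i}(k_{\un{i},\la}\phi_\un{i}(a))=\psi_\un{i}(k_{\un{i},\la})\pi(a)=\sum_{\un{0}\neq\un{n}\leq\un{m}}\psi_{\un{n}\vee\un{i}}\big(\iota_\un{i}^{\un{n}\vee\un{i}}(k_{\un{i},\la})\iota_\un{n}^{\un{n}\vee\un{i}}(k_\un{n})\big).
\]
By Proposition \ref{P:sca ai} with $F=\{i\}$, each summand converges in norm to $\psi_{\un{n}\vee\un{i}}(\iota_\un{n}^{\un{n}\vee\un{i}}(k_\un{n}))$. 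Strong compact alignment enters exactly through the summands with $n_i=0$, which are all of them when $i\notin F$. Since $\pi$ is injective, $\psi_\un{i}$ is isometric, so $(k_{\un{i},\la}\phi_\un{i}(a))_\la$ is norm-Cauchy in $\K(X_\un{i})$. Its norm limit must coincide with its pointwise limit $\phi_\un{i}(a)$, so $\phi_\un{i}(a)\in\K(X_\un{i})$ for every $i\in[d]$. This is the same approximate-unit device used in the proof of Proposition \ref{P:repTfam}. Combined with your Steps 1 and 2, it completes the proof.
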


We define the \emph{ideal of the CNP-relations} by
\begin{equation}\label{Eq:CNPideal}
\mathfrak{J}_\I:=\sca{\ol{\pi}_X(\I_F)\ol{q}_{X,F}\mid F\subseteq[d]}\subseteq\N\T_X. 
\end{equation}
We then define the \emph{Cuntz-Nica-Pimsner algebra (of $X$)} to be the following C*-algebra:
\[
\N\O_X :=\N\T_X/\mathfrak{J}_\I.
\] 
This C*-algebra is universal with respect to the \emph{CNP-representations of $X$}, i.e., those Nica-covariant representations $(\pi,t)$ of $X$ that also satisfy
\[
\pi(a)q_F = \pi(a) + \sum \{ (-1)^{|\un{n}|} \psi_{\un{n}}(\phi_{\un{n}}(a)) \mid \un{0} \neq \un{n} \leq \un{1}_F \}=0  \foral a\in\I_F \; \text{and} \; F\subseteq[d].
\]
We can view $\N\O_X$ as the C*-algebra generated by a universal CNP-representation of $X$, and this representation admits a gauge action since $\N\O_X$ is an equivariant quotient of $\N\T_X$.
Notice that $\N\O_X$ is defined with respect to simple algebraic relations (by Proposition \ref{P:prod cai}) that are induced by $2^d$ ideals of the coefficient algebra, namely the family $\I$.
This construction resembles that of the Cuntz-Pimsner algebra of a single C*-correspondence, and recovers it when $d=1$.

In \cite{DK18} it is shown that $\N\O_X$ coincides with the Cuntz-Nica-Pimsner algebra of Sims and Yeend \cite{SY10}, and thus with the strong covariance algebra of Sehnem \cite{Seh18}.
In particular, the universal CNP-representation is injective by \cite[Theorem 4.1]{SY10}, since $(\bZ^d,\bZ_+^d)$ satisfies \cite[(3.5)]{SY10}.
Moreover, $\N\O_X$ is co-universal with respect to the injective Nica-covariant representations of $X$ that admit a gauge action \cite{SY10}.
The co-universal property of $\N\O_X$ has been verified in several works \cite{CLSV11, DK18, DKKLL20, Seh21} in more general contexts.

We close this section by outlining how the quotient C*-correspondence construction can be extended to product systems.
Let $X$ be a product system over $\bZ_+^d$ with coefficients in a C*-algebra $A$ and let $I\subseteq A$ be an ideal.
We say that $I$ is \emph{positively invariant (for $X$)} if it satisfies
\[
\ol{\spn}\{\sca{X_\un{n},IX_\un{n}}\mid \un{n}\in\bZ_+^d\}\subseteq I.
\]
In other words, the ideal $I$ is positively invariant for $X$ if and only if it is positively invariant for every fibre of $X$.
This observation lies at the heart of the following proposition.

\begin{proposition}\label{P:qntrprodsys}\cite[Propositions 2.3.5, 2.4.4 and 2.5.5]{DeK24}
Let $X$ be a product system over $\bZ_+^d$ with coefficients in a C*-algebra $A$ and let $I\subseteq A$ be an ideal that is positively invariant for $X$.
Set 
\[
[X]_I:=\{[X_\un{n}]_I\}_{\un{n}\in\bZ_+^d},
\; \text{where } \;
[X_\un{n}]_I = X_\un{n}/X_\un{n}I \foral \un{n} \in \bZ_+^d.
\]
Then $[X]_I$ carries a canonical structure as a product system over $\bZ_+^d$ with coefficients in $[A]_I$, given by the multiplication maps
\[
[X_\un{n}]_I\otimes_{[A]_I}[X_\un{m}]_I\to[X_{\un{n}+\un{m}}]_I; [\xi_\un{n}]_I \otimes [\xi_\un{m}]_I \mapsto [\xi_\un{n} \xi_\un{m}]_I \foral \xi_\un{n}\in X_\un{n}, \xi_\un{m}\in X_\un{m}, \un{n},\un{m} \in \bZ_+^d.
\] 
Additionally, if $X$ is (strong) compactly aligned, then so is $[X]_I$.
\end{proposition}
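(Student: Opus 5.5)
The plan is to build $[X]_I$ fibrewise and then check the product system axioms and the alignment conditions by pushing everything through the quotient maps $[\hspace{1pt}\cdot\hspace{1pt}]_I$.

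\textbf{Step 1: the fibres.} Since $I$ is positively invariant for $X$, it is positively invariant for each fibre $X_\un{n}$. So each $[X_\un{n}]_I$ is a C*-correspondence over $[A]_I$ with left action $[\phi_\un{n}]_I$, as in Subsection \ref{Ss:C*-cor}. For $\un{n}=\un{0}$ we have $[X_\un{0}]_I=A/AI=[A]_I$, which is axiom (i).

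\textbf{Step 2: the multiplication maps are well defined.} First I will show that $u_{\un{n},\un{m}}$ sends both $X_\un{n}I\otimes X_\un{m}$ and $X_\un{n}\otimes X_\un{m}I$ into $X_{\un{n}+\un{m}}I$.
\begin{itemize}
\item The second inclusion is clear, since $\xi_\un{n}(\xi_\un{m}a)=(\xi_\un{n}\xi_\un{m})a$.
\item For the first, let $a\in I$. Then $\sca{a\xi_\un{m},a\xi_\un{m}}\in I$ by positive invariance, so $a\xi_\un{m}\in X_\un{m}I$ (e.g.\ via \eqref{Eq: comp}, or \cite[Lemma 1.5]{Kat07}). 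Hence $(\xi_\un{n}a)\xi_\un{m}=\xi_\un{n}(a\xi_\un{m})\in X_{\un{n}+\un{m}}I$.
\end{itemize}
Consequently the formula $[\xi_\un{n}]_I\otimes[\xi_\un{m}]_I\mapsto[\xi_\un{n}\xi_\un{m}]_I$ is well defined on the algebraic balanced tensor product.

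\textbf{Step 3: the multiplication maps are isometric, surjective and associative.} The formula preserves inner products:
\[
\sca{[\xi_\un{n}\xi_\un{m}]_I,[\eta_\un{n}\eta_\un{m}]_I}=[\sca{\xi_\un{m},\phi_\un{m}(\sca{\xi_\un{n},\eta_\un{n}})\eta_\un{m}}]_I,
\]
which is the inner product of $[\xi_\un{n}]_I\otimes[\xi_\un{m}]_I$ with $[\eta_\un{n}]_I\otimes[\eta_\un{m}]_I$ in $[X_\un{n}]_I\otimes_{[A]_I}[X_\un{m}]_I$. So the map extends to an isometry on the completed tensor product.
\begin{itemize}
\item For $\un{n},\un{m}\neq\un{0}$, surjectivity follows because $u_{\un{n},\un{m}}$ is surjective and the quotient map is continuous and surjective. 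This gives axiom (iv).
\item For $\un{n}=\un{0}$, the image is $[[\phi_\un{m}(A)X_\un{m}]]_I=[\phi_{[X_\un{m}]_I}([A]_I)[X_\un{m}]_I]$, which is exactly axiom (ii).
\item Axiom (iii) is immediate.
\item Associativity, axiom (v), holds on simple tensors, since both sides send $[\xi_\un{n}]_I\otimes[\xi_\un{m}]_I\otimes[\xi_\un{k}]_I$ to $[\xi_\un{n}\xi_\un{m}\xi_\un{k}]_I$. It then extends by continuity.
\end{itemize}

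\textbf{Step 4: (strong) compact alignment.} I first verify the naturality relation
\[
[\iota_\un{n}^{\un{n}+\un{m}}(S)]_I=\iota_\un{n}^{\un{n}+\un{m}}([S]_I) \foral S\in\L(X_\un{n}).
\]
Both sides act on $[\xi_\un{n}\xi_\un{m}]_I$ as $[(S\xi_\un{n})\xi_\un{m}]_I$, and these elements span a dense subspace. By Lemma \ref{L:Kat07}, $[\hspace{1pt}\cdot\hspace{1pt}]_I$ maps $\K(X_\un{n})$ onto $\K([X_\un{n}]_I)$. So every $[k_\un{n}]_I$ and $[k_\un{m}]_I$ lifts to compacts $k_\un{n}, k_\un{m}$, and
\[
\iota_\un{n}^{\un{n}\vee\un{m}}([k_\un{n}]_I)\,\iota_\un{m}^{\un{n}\vee\un{m}}([k_\un{m}]_I)=[\iota_\un{n}^{\un{n}\vee\un{m}}(k_\un{n})\,\iota_\un{m}^{\un{n}\vee\un{m}}(k_\un{m})]_I\in[\K(X_{\un{n}\vee\un{m}})]_I=\K([X_{\un{n}\vee\un{m}}]_I).
\]
This is compact alignment of $[X]_I$. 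Condition \eqref{Eq:sca} transfers in exactly the same way.

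\textbf{Main obstacle.} I expect the delicate point to be Step 3: correctly identifying $[X_\un{n}]_I\otimes_{[A]_I}[X_\un{m}]_I$ with $(X_\un{n}\otimes_AX_\un{m})/(X_\un{n}\otimes_AX_\un{m})I$. This is a known identification (cf.\ \cite{FMR03,Kat07}), but the case $\un{n}=\un{0}$ needs care because the fibres are not assumed non-degenerate.
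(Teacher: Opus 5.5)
Your proof is correct and is essentially the standard argument behind the cited results of \cite{DeK24} (the paper itself gives no proof): quotient each fibre, induce the multiplication maps from $u_{\un{n},\un{m}}$ and check via inner products that they are unitaries (with the $\un{n}=\un{0}$ image handled separately), then transfer (strong) compact alignment through the naturality relation $[\iota_\un{n}^{\un{n}+\un{m}}(S)]_I=\iota_\un{n}^{\un{n}+\un{m}}([S]_I)$ and Lemma \ref{L:Kat07}. The ``main obstacle'' you flag is already dispatched by your direct inner-product computation in Step 3, so you never need to identify $[X_\un{n}]_I\otimes_{[A]_I}[X_\un{m}]_I$ with a quotient of $X_\un{n}\otimes_A X_\un{m}$.
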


\section{Gauge-invariant ideal structure of $\N\T_X$}\label{S:BBDeK}

\noindent Next we present the more specialised tools used to obtain the main results of \cite{BB24, DeK24}.
Most of the material in this section constitutes an abridged account of \cite{BB24,DeK24}, though there are some new results (Propositions \ref{P:maxNT}, \ref{P:repTfam} and \ref{P:Tfamga}, and Lemma \ref{L:GIUTequiv}) to show how the objects of interest fit within the broader landscape of the aforementioned works.

\subsection{NT-$2^d$-tuples}\label{Ss:NT}

We begin by summarising the tools and concepts used to arrive at the main result of \cite{DeK24}.
Therein the analysis proceeds by first dealing with the ``injective" case and then using the quotient product system machinery to deal with the ``non-injective" case.
The meaning behind this nomenclature will be clarified in the sequel.
Henceforth, we will take $X$ to be a strong compactly aligned product system with coefficients in a C*-algebra $A$.
Drawing inspiration from Theorem \ref{T:Kat par} and the role of Theorem \ref{T:giut} within the proof, first we need to extend the relative Cuntz-Pimsner algebra construction.
This leads to the following definition.

\begin{definition}\label{D:CNP rel}\cite[Definition 3.1.1]{DeK24}
Let $X$ be a strong compactly aligned product system with coefficients in a C*-algebra $A$. 
A \emph{$2^d$-tuple (of $X$)} is a family $\L := \{\L_F\}_{F \subseteq [d]}$ such that $\L_F$ is a non-empty subset of $A$ for all $F\subseteq[d]$.
A $2^d$-tuple $\L$ of $X$ is called \emph{relative} if
\[
\L_F\subseteq \bigcap\{\phi_{\un{i}}^{-1}(\K(X_{\un{i}}))\mid i\in F\} 
\foral
\mt\neq F\subseteq [d].
\]
\end{definition}

The consideration of families of $2^d$ non-empty subsets of the coefficient algebra is inspired by the family $\I$.
We write $\L\subseteq\L'$ for $2^d$-tuples $\L$ and $\L'$ if and only if $\L_F\subseteq\L_F'$ for all $F\subseteq[d]$.
This defines a partial order on the set of $2^d$-tuples of $X$.
We say that $\L=\L'$ if and only if $\L\subseteq\L'$ and $\L'\subseteq\L$.
Two key (relative) $2^d$-tuples are $\{\{0\}\}_{F\subseteq[d]}$ and $\I$.

Let $(\pi,t)$ be a Nica-covariant representation of $X$.
The crucial property of a relative $2^d$-tuple $\L$ is that
\[
\pi(a)q_F=\pi(a)+\sum\{(-1)^{|\un{n}|}\psi_\un{n}(\phi_\un{n}(a))\mid \un{0}\neq\un{n}\leq\un{1}_F\}\in\ca(\pi,t)\foral a\in\L_F \; \text{and} \; \mt\neq F\subseteq[d],
\]
using Proposition \ref{P:prod cai}.
This allows us to extend the ideas of \cite{DK18} in a natural way.

\begin{definition}\label{D:LCNPideal}\cite[Definition 3.1.3]{DeK24}
Let $X$ be a strong compactly aligned product system with coefficients in a C*-algebra $A$ and let $\L$ be a relative $2^d$-tuple of $X$. 
We define the \emph{ideal of the $\L$-relative CNP-relations} to be
\[
\fJ_{\L} := \sum\{\fJ_{\L, F}\mid F\subseteq[d]\} \subseteq \N\T_X,
\; \text{where} \;
\mathfrak{J}_{\L,F}:=\sca{\ol{\pi}_X(\L_F)\ol{q}_{X,F}} \; \text{for each} \; F\subseteq[d].
\]
We say that \emph{$\L$ induces $\fJ_{\L}$}.
\end{definition}

Being an algebraic sum of ideals in $\N\T_X$, the space $\mathfrak{J}_\L$ is itself an ideal in $\N\T_X$.
In turn, we obtain that
\[
\fJ_\L=\sca{\ol{\pi}_X(\L_F)\ol{q}_{X,F}\mid F\subseteq[d]}.
\]
By setting $\L=\I$, we recover $\mathfrak{J}_\I$ as defined in \cite{DK18} and (\ref{Eq:CNPideal}).
It is routine to check that $\fJ_\L$ and each $\fJ_{\L,F}$ are gauge-invariant (or see \cite[Proposition 3.1.4]{De24}).

Usually we are more interested in the ideal $\fJ_\L$ than the relative $2^d$-tuple $\L$.
Thus, noting that the family $\sca{\L}:=\{\sca{\L_F}\}_{F\subseteq[d]}$ is a relative $2^d$-tuple and that $\fJ_\L=\fJ_{\sca{\L}}$ \cite[Lemma 3.1.4]{DeK24}, in many cases we can assume that $\L$ consists of ideals without loss of generality.

Relative $2^d$-tuples are so-named because they give rise to the appropriate higher-rank analogue of relative Cuntz-Pimsner algebras.

\begin{definition}\label{D:LCNP}\cite[Definition 3.1.15]{DeK24}
Let $X$ be a strong compactly aligned product system with coefficients in a C*-algebra $A$. 
Let $\L$ be a relative $2^d$-tuple of $X$ and let $(\pi,t)$ be a Nica-covariant representation of $X$. 
We say that $(\pi,t)$ is an \emph{$\L$-relative CNP-representation (of $X$)} if it satisfies
\[
\pi(\L_F)q_F = \{0\}
\foral
F \subseteq [d].
\]
The universal C*-algebra with respect to the $\L$-relative CNP-representations of $X$ is denoted by $\N\O(\L,X)\equiv\N\T_X/\fJ_\L$, and we refer to it as the \emph{$\L$-relative Cuntz-Nica-Pimsner algebra (of $X$)}.
We write $(\pi_X^\L,t_X^\L)$ for the \emph{universal $\L$-relative CNP-representation (of $X$)}.
\end{definition}

Existence and uniqueness (up to canonical $*$-isomorphism) of the pair $(\N\O(\L,X),(\pi_X^\L,t_X^\L))$ are ascertained in \cite[Proposition 3.1.16]{DeK24}.
Since $\N\O(\L,X)$ is an equivariant quotient of $\N\T_X$, the representation $(\pi_X^\L,t_X^\L)$ admits a gauge action.
Notice that
\[
\N\O(\{\{0\}\}_{F\subseteq[d]},X)=\N\T_X\qand \N\O(\I,X)=\N\O_X.
\]
When $X$ is a C*-correspondence, this construction recovers the relative Cuntz-Pimsner algebras.
When working with $\N\O(\L,X)$, we can assume that $\L$ consists of ideals without loss of generality, since $\N\O(\L,X)=\N\O(\sca{\L},X)$ by the comments preceding Definition \ref{D:LCNP}.
A key question is to ascertain the conditions which need to be imposed on $\L$ in order for $\N\O(\L,X)$ to satisfy a Gauge-Invariant Uniqueness Theorem.
As we will see, this question is intrinsically linked to the question of parametrising the gauge-invariant ideals of $\N\T_X$.

A first approach towards the parametrisation of the gauge-invariant ideals of $\N\T_X$ would be to establish a correspondence between the relative $2^d$-tuples of $X$ and the gauge-invariant ideals of $\N\T_X$ that they induce.
However, this is insufficient as different relative $2^d$-tuples may induce the same gauge-invariant ideal of $\N\T_X$ \cite[Remark 3.1.6]{DeK24}.
To remedy this issue, we instead look for the largest relative $2^d$-tuple inducing a fixed gauge-invariant ideal of $\N\T_X$.

\begin{definition}\label{D:maximal}\cite[Definition 3.1.8]{DeK24}
Let $X$ be a strong compactly aligned product system with coefficients in a C*-algebra $A$ and let $\M$ be a relative $2^d$-tuple of $X$.
We say that $\M$ is a \emph{maximal $2^d$-tuple (of $X$)} if whenever $\L$ is a relative $2^d$-tuple of $X$ such that $\mathfrak{J}_\L=\mathfrak{J}_\M$ and $\M\subseteq\L$, we have that $\M=\L$.
\end{definition}

Given a relative $2^d$-tuple $\L$, there always exists a maximal $2^d$-tuple $\M$ which induces $\fJ_\L$.
This maximal $2^d$-tuple is unique and consists of ideals \cite[Proposition 3.1.9]{DeK24}.
Note that both $\{\{0\}\}_{F\subseteq[d]}$ and $\I$ are maximal \cite[Remarks 3.1.10 and 3.2.8]{DeK24}.

Injective Nica-covariant representations that admit gauge actions provide the quintessential supply of maximal $2^d$-tuples.
More precisely, let $(\pi,t)$ be a Nica-covariant representation. 
We define $\L^{(\pi,t)}$ to be the $2^d$-tuple of $X$ given by
\[
\L_\mt^{(\pi,t)}:=\ker\pi \qand \L_F^{(\pi,t)}:=\pi^{-1}(B_{(\un{0},\un{1}_F]}^{(\pi,t)}) \foral \mt\neq F\subseteq[d].
\]
It is straightforward to check that $\L^{(\pi,t)}$ consists of ideals.
When $(\pi,t)$ is injective and admits a gauge action, the $2^d$-tuple $\L^{(\pi,t)}$ is maximal and contained in $\I$ \cite[Proposition 3.1.18]{DeK24}.

The property of being contained inside $\I$ is a useful one that a $2^d$-tuple $\L$ may or may not possess (e.g., any $2^d$-tuple $\L$ with $\L_\mt\neq\{0\}$ is not contained inside $\I$).
The study of $2^d$-tuples $\L$ satisfying $\L\subseteq\I$ was central to \cite{DeK24}, and the key advantage is that they are exactly the relative $2^d$-tuples $\L$ such that $\N\O(\L,X)$ contains an isometric copy of $X$ \cite[Proposition 3.2.1]{DeK24}.
In turn, the structure of $\N\O(\L,X)$ permits an analysis via cores \cite[Proposition 3.1.17]{DeK24}.
It follows that the $2^d$-tuples $\L$ which are both inside $\I$ and maximal admit the following Gauge-Invariant Uniqueness Theorem.

\begin{theorem}\label{T:d GIUT M}\cite[Theorem 3.2.11]{DeK24}
Let $X$ be a strong compactly aligned product system with coefficients in a C*-algebra $A$. 
Let $\L$ be a maximal $2^d$-tuple of $X$ such that $\L\subseteq\I$ and let $(\pi,t)$ be a Nica-covariant representation of $X$. 
Then $\N\O(\L,X)\cong\ca(\pi,t)$ via a (unique) canonical $*$-isomorphism if and only if $(\pi,t)$ admits a gauge action and $\L^{(\pi,t)}=\L$.
\end{theorem}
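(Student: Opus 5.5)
The plan is to prove the two implications separately. Write $(\pi_X^\L,t_X^\L)$ for the universal $\L$-relative CNP-representation. The whole argument rests on two consequences of $\L\subseteq\I$:
\begin{itemize}
\item by \cite[Proposition 3.2.1]{DeK24}, $(\pi_X^\L,t_X^\L)$ is injective;
\item $\L_\mt\subseteq\I_\mt=\{0\}$.
\end{itemize}

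\emph{Forward direction.} Suppose $\N\O(\L,X)\cong\ca(\pi,t)$ canonically. Transporting the gauge action of $\N\O(\L,X)$ through the isomorphism gives one for $(\pi,t)$. A canonical isomorphism maps cores onto cores and $\pi_X^\L(A)$ onto $\pi(A)$, so $\L^{(\pi,t)}=\L^{(\pi_X^\L,t_X^\L)}$. It therefore suffices to prove $\M:=\L^{(\pi_X^\L,t_X^\L)}=\L$.

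Since $(\pi_X^\L,t_X^\L)$ is injective and admits a gauge action, \cite[Proposition 3.1.18]{DeK24} shows that $\M$ is maximal. Next I show $\fJ_\L=\fJ_\M$.
\begin{itemize}
\item $\L\subseteq\M$. For $a\in\L_F$ with $F\neq\mt$, we have $\pi_X^\L(a)q_F=0$, and Proposition \ref{P:prod cai} rewrites this as $\pi_X^\L(a)\in B^{(\pi_X^\L,t_X^\L)}_{(\un{0},\un{1}_F]}$. Also $\L_\mt=\{0\}=\ker\pi_X^\L$.
\item Generators of $\fJ_\M$ die in $\N\O(\L,X)$. If $a\in\M_F$, then $\pi_X^\L(a)$ is a sum of terms $\psi_{\un{n}}(k_{\un{n}})$ with $\un{0}\neq\un{n}\leq\un{1}_F$. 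Proposition \ref{P:DK3.3} then gives $\pi_X^\L(a)q_F=0$, i.e.\ $\ol\pi_X(a)\ol q_{X,F}\in\fJ_\L$.
\end{itemize}
The second point gives $\fJ_\M\subseteq\fJ_\L$, and the first gives $\fJ_\L\subseteq\fJ_\M$, so $\fJ_\L=\fJ_\M$. Now maximality of $\L$ applied to $\L\subseteq\M$ yields $\L=\M$.

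\emph{Converse.} Assume $(\pi,t)$ admits a gauge action and $\L^{(\pi,t)}=\L$. Then $\ker\pi=\L_\mt=\{0\}$, so $(\pi,t)$ is injective. For $a\in\L_F=\pi^{-1}(B_{(\un{0},\un{1}_F]})$, Proposition \ref{P:DK3.3} gives $\pi(a)q_F=0$. Hence $(\pi,t)$ is an $\L$-relative CNP-representation, and we obtain a canonical $*$-epimorphism $\Phi\colon\N\O(\L,X)\to\ca(\pi,t)$. Because $\Phi$ intertwines the gauge actions, the standard conditional-expectation argument reduces injectivity of $\Phi$ to injectivity on the fixed point algebra $B_{[\un{0},\infty]}$. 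This algebra is the inductive limit of the cores $B_{[\un{0},\un{m}]}$, so it suffices to show $\Phi$ is injective on each $B_{[\un{0},\un{m}]}$.

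\emph{Main obstacle: injectivity on $B_{[\un{0},\un{m}]}$.} I argue by induction on $|\un{m}|$. Take
\[
f=\pi_X^\L(a)+\sum_{\un{0}\neq\un{n}\leq\un{m}}\psi^\L_{\un{n}}(k_{\un{n}})\quad\text{with}\quad\Phi(f)=0.
\]
Then $\pi(a)\in B^{(\pi,t)}_{(\un{0},\un{m}]}$. Proposition \ref{P:DK3.4} together with the description of $\L^{(\pi,t)}$ should put $a$ in $\L_{\supp\un{m}}$; the delicate point is passing from $B_{(\un{0},\un{m}]}$ to $B_{(\un{0},\un{1}_F]}$, which I would handle via the compressions $q_F$ and the approximate units $e_{F,\la}$ of Proposition \ref{P:sca ai}. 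Once $a\in\L_{\supp\un{m}}$, the CNP-relation $\pi_X^\L(a)q_{\supp\un{m}}=0$ in $\N\O(\L,X)$ rewrites $\pi_X^\L(a)$ as an element of the $\psi^\L$-span. This leaves an element $f'\in B_{(\un{0},\un{m}]}$ with $\Phi(f')=0$. Compressing by $t^\L_{\un{i}}(X_{\un{i}})^*(\cdot)\,t^\L_{\un{i}}(X_{\un{i}})$ for $i\in\supp\un{m}$ and using Nica covariance (Wick ordering) lowers the degree, so the induction hypothesis applies. Since $\pi_X^\L$ is injective, recovering $f'$ from all such compressions gives $f'=0$. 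I expect this degree-lowering and reconstruction step to be the hardest part of the proof.
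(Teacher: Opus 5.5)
Your overall architecture is sound; the paper only quotes this theorem from \cite{DeK24}, so I judge the argument on its own terms. The forward direction is correct. It is the maximality argument the paper runs in Proposition \ref{P:maxNT}, and it works without properness because injectivity of $\pi_X^\L$ and Proposition \ref{P:DK3.4} put $\M\subseteq\I$, so $\M$ is relative. The reduction of the converse to injectivity of $\Phi$ on the cores $B_{[\un{0},\un{m}]}$ is also right, as is your degree count for the compressions by $t^\L_{\un{i}}(X_{\un{i}})$. The step you call delicate is actually immediate. Proposition \ref{P:DK3.3} gives $\pi(a)q_F=0$, and Proposition \ref{P:DK3.4} gives $a\in\I_F\subseteq\bigcap_{i\in[d]}\phi_{\un{i}}^{-1}(\K(X_{\un{i}}))$. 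Hence Proposition \ref{P:prod cai} applies and yields $0=\pi(a)q_F=\pi(a)+\sum\{(-1)^{|\un{n}|}\psi_{\un{n}}(\phi_{\un{n}}(a))\mid\un{0}\neq\un{n}\leq\un{1}_F\}$, that is, $a\in\L^{(\pi,t)}_F=\L_F$.

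The genuine gap is the final reconstruction step. You assert it without proof and attribute it to injectivity of $\pi_X^\L$, but injectivity plays no role there, so as written the hardest step is missing. The following closes it.

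Put $F=\supp\un{m}$ and let $f'\in B_{(\un{0},\un{m}]}\cap\ker\Phi$. Every $\un{0}\neq\un{n}\leq\un{m}$ satisfies $\un{n}\not\perp F$, so Proposition \ref{P:pf reducing} gives $q_Ff'=0=f'q_F$.

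Now compress $f'^*f'$ rather than $f'$. For $i\in F$ and $\eta\in X_{\un{i}}$, the element $t^\L_{\un{i}}(\eta)^*f'^*f't^\L_{\un{i}}(\eta)$ lies in $B_{[\un{0},\un{m}-\un{i}]}\cap\ker\Phi$, so it vanishes by the inductive hypothesis. Hence $f't^\L_{\un{i}}(\eta)=0$, and passing through an approximate unit of $\K(X_{\un{i}})$ gives $f'p_{\un{i}}=0$.

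Since the $p_{\un{i}}$ commute, $I-q_F=\sum\{(-1)^{|S|+1}\prod_{i\in S}p_{\un{i}}\mid\mt\neq S\subseteq F\}$, whence $f'(I-q_F)=0$. Combined with $f'q_F=0$, this forces $f'=0$. With this inserted, the induction closes; the base case $\un{m}=\un{0}$ is injectivity of $\pi$, since $\ker\pi=\L_\mt\subseteq\I_\mt=\{0\}$. Note also that maximality of $\L$ is never used in the converse.
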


Theorem \ref{T:d GIUT M} recovers \cite[Corollary 11.8]{Kat07} when $d=1$, and \cite[Theorem 4.2]{DK18} when $\L = \I$.
The $2^d$-tuples $\L$ that satisfy $\L\subseteq\I$ but which may not be maximal also admit a Gauge-Invariant Uniqueness Theorem \cite[Theorem 3.4.9]{DeK24}, though we will not need it in the current work. 

It follows that the maximal $2^d$-tuples $\L$ of $X$ that are inside $\I$ parametrise the gauge-invariant ideals $\fJ$ of $\N\T_X$ such that the Nica-covariant representation $(Q_\fJ\circ\ol{\pi}_X,Q_\fJ\circ\ol{t}_X)$ is injective, where $Q_\fJ\colon\N\T_X\to\N\T_X/\fJ$ is the quotient map \cite[Remark 3.2.10]{DeK24}.
This justifies the naming convention used at the start of the section.
Note that $Q_\fJ\circ\ol{t}_X$ is notational shorthand for the family of maps $\{Q_\fJ\circ\ol{t}_{X,\un{n}}\}_{\un{n}\in\bZ_+^d}$.
The representation $(Q_\fJ\circ\ol{\pi}_X,Q_\fJ\circ\ol{t}_X)$ admits a gauge action by gauge-invariance of $\fJ$.

Before moving on to the ``non-injective" case, we turn to characterising maximality of a $2^d$-tuple $\L\subseteq\I$ via product system operations alone.
There are four ingredients in this respect, and we have already seen one: $\L$ needs to consist of ideals.
The next two are easily extracted and abstracted from $\I$. 

\begin{definition}\label{D:inv po}\cite[Definition 3.1.11]{DeK24}
Let $X$ be a strong compactly aligned product system with coefficients in a C*-algebra $A$. 
Let $\L$ be a $2^d$-tuple of $X$.
\begin{enumerate}
\item  We say that $\L$ is $X$-\emph{invariant} if $\left[\sca{X_\un{n},\L_F X_\un{n}}\right]\subseteq\L_F$ for all $\un{n}\perp F$ and $F\subseteq[d]$.
\item We say that $\L$ is \emph{partially ordered} if $\L_{F_1} \subseteq \L_{F_2}$ whenever  $F_1 \subseteq F_2 \subseteq [d]$.
\end{enumerate}
\end{definition}
When the underlying product system $X$ is clear from the context, we will abbreviate ``$X$-invariant" as simply ``invariant".
Notice that when we take $F=\mt$, condition (i) implies that $\L_\mt$ is positively invariant for $X$ (provided that $\L_\mt$ is an ideal).
If $\L_F$ is an ideal, then we may drop the closed linear span in condition (i).
The $2^d$-tuple $\I$ is invariant and partially ordered, and so is $\L^{(\pi,t)}$ for any Nica-covariant representation $(\pi,t)$ \cite[Proposition 3.1.14]{DeK24}.
The $2^d$-tuple $\J$ is partially ordered but is not, in general, invariant (see Remark \ref{R:Jnotinv} for a counterexample).

The final ingredient of maximality necessitates some auxiliary objects.

\begin{definition}\label{D:L1}\cite[Definition 3.4.1]{DeK24}
Let $X$ be a strong compactly aligned product system with coefficients in a C*-algebra $A$ and let $\L$ be a $2^d$-tuple of $X$ that consists of ideals.
Fixing $\mt \neq F \subsetneq [d]$, we define
\[
\L_{\inv, F} := \bigcap_{\un{n}\perp F}X_\un{n}^{-1}(\cap_{F\subsetneq D}\L_D)
\qand
\L_{\lim, F} := \{a\in A \mid \lim_{\un{n}\perp F}\|\phi_\un{n}(a)+\K(X_\un{n}\L_F)\|=0\}.
\]
If in addition $\L\subseteq\I$, then we define the $2^d$-tuple $\L^{(1)}$ of $X$ by
\[
\L_F^{(1)} 
:=
\begin{cases}
\{0\} & \text{ if } F = \mt, \\
\I_F \cap \L_{\inv, F} \cap \L_{\lim, F} & \text{ if } \mt\neq F \subsetneq [d], \\
\L_{[d]} & \text{ if } F = [d].
\end{cases}
\]
\end{definition}

Both $\L_{\inv,F}$ and $\L_{\lim,F}$ are ideals of $A$ \cite[Proposition 3.4.2]{DeK24}.
The equality 
\[
\lim_{\un{n}\perp F}\|\phi_\un{n}(a)+\K(X_\un{n}\L_F)\|=0 
\]
holds if and only if for each $\varepsilon>0$ there exists $\un{n}\perp F$ such that
\[
\|\phi_\un{m}(a)+\K(X_\un{m}\L_F)\|<\varepsilon\foral \un{m}\geq\un{n} \; \text{satisfying} \; \un{m}\perp F.
\]
This condition simplifies when we impose additional structure on $\L$.
More precisely, we have the following proposition.

\begin{proposition}\label{P:lim perp}\cite[Lemma 3.3.3]{DeK24}
Let $X$ be a strong compactly aligned product system with coefficients in a C*-algebra $A$.
Let $\L$ be an invariant $2^d$-tuple of $X$ which consists of ideals and satisfies the following condition:
\[
\L_F\subseteq\bigcap\{\phi_\un{i}^{-1}(\K(X_\un{i}))\mid i\in[d]\} \foral F\subseteq[d].
\]
Then, for each $F\subseteq[d]$ and $a\in A$, we have that $\lim_{\un{n}\perp F}\|\phi_\un{n}(a)+\K(X_\un{n}\L_F)\|=0$ if and only if for each $\varepsilon>0$ there exists $\un{n}\perp F$ and $k_\un{n} \in\K(X_\un{n}\L_F)$ such that $\|\phi_\un{n}(a)+k_\un{n}\|<\varepsilon$.
\end{proposition}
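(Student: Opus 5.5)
The ``only if'' direction is immediate: if $\|\phi_\un{n}(a)+k_\un{n}\|<\varepsilon$ is attained at some $\un{n}\perp F$, it is also attained in the tail. So the content is the converse. Suppose for each $\varepsilon>0$ there are $\un{n}\perp F$ and $k_\un{n}\in\K(X_\un{n}\L_F)$ with $\|\phi_\un{n}(a)+k_\un{n}\|<\varepsilon$. I would show that the map $\un{m}\mapsto\|\phi_\un{m}(a)+\K(X_\un{m}\L_F)\|$ is non-increasing along $\un{m}\perp F$. The limit then exists and is zero.

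Fix $\un{m}\geq\un{n}$ with $\un{m}\perp F$, and write $\un{m}=\un{n}+\un{r}$ with $\un{r}\perp F$. Apply $\iota_\un{n}^{\un{m}}$, which is a contractive $*$-homomorphism with $\iota_\un{n}^\un{m}(\phi_\un{n}(a))=\phi_\un{m}(a)$. This gives
\[
\|\phi_\un{m}(a)+\iota_\un{n}^\un{m}(k_\un{n})\|\leq\|\phi_\un{n}(a)+k_\un{n}\|<\varepsilon.
\]
It therefore suffices to show that $\iota_\un{n}^\un{m}(k_\un{n})\in\K(X_\un{m}\L_F)$, or at least that it lies in $\K(X_\un{m}\L_F)$ modulo something arbitrarily small.

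There are two points to establish.

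\emph{Compactness.} The hypothesis $\L_F\subseteq\bigcap_i\phi_\un{i}^{-1}(\K(X_\un{i}))$ together with strong compact alignment is used here. Take $k_\un{n}=\Theta_{\xi b,\eta}$ with $b\in\L_F$, which spans a dense subset by $\K(X_\un{n}\L_F)=\ol{\spn}\{\Theta_{\xi,\eta}\mid\xi,\eta\in X_\un{n}\L_F\}$. Then $\iota_\un{n}^\un{m}(\Theta_{\xi b,\eta})=\Theta_\xi\,\phi_\un{r}(b)\,\Theta_\eta^*$ in the notation of Lemma \ref{L:lance}. Compactness of $\phi_\un{r}(b)$ would follow from $b\in\bigcap_i\phi_\un{i}^{-1}(\K(X_\un{i}))$, since strong compact alignment gives $\phi_\un{r}(b)\in\K(X_\un{r})$. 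I expect this to be the main obstacle. One must check, via the displayed identity $\bigcap_{i\in F}\phi_\un{i}^{-1}(\K(X_\un{i}))=\bigcap_{\un{0}\leq\un{n}\leq\un{1}_F}\phi_\un{n}^{-1}(\K(X_\un{n}))$ and an induction on $|\un{r}|$ using \eqref{Eq:sca}, that this holds for every $\un{r}$. Then $\Theta_\xi\K(X_\un{r})\Theta_\eta^*\subseteq\K(X_\un{m})$.

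\emph{Membership in $\K(X_\un{m}\L_F)$.} By \eqref{Eq: comp} it suffices to show $\sca{X_\un{m},\iota_\un{n}^\un{m}(k_\un{n})X_\un{m}}\subseteq\L_F$. Compute on $\zeta_\un{n}\zeta_\un{r}$ and $\zeta'_\un{n}\zeta'_\un{r}$. The resulting inner product is
\[
\sca{\zeta_\un{r},\phi_\un{r}(\sca{\zeta_\un{n},k_\un{n}\zeta'_\un{n}})\zeta'_\un{r}}\in\sca{X_\un{r},\L_F X_\un{r}}\subseteq\L_F,
\]
where the first inclusion holds because $\sca{X_\un{n},k_\un{n}X_\un{n}}\subseteq\L_F$, and the second is exactly invariance of $\L$, as $\un{r}\perp F$. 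Linearity and continuity extend this from simple tensors to all of $X_\un{m}$.

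Combining the two points, $\|\phi_\un{m}(a)+\K(X_\un{m}\L_F)\|<\varepsilon$ for all $\un{m}\geq\un{n}$ with $\un{m}\perp F$, which is the defining condition for the limit to be zero.
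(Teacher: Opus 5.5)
Your overall architecture is right: push $k_{\un{n}}$ forward by $\iota_{\un{n}}^{\un{m}}$ to get monotonicity along $\un{m}\perp F$. Your inner-product computation showing $\sca{X_{\un{m}},\iota_{\un{n}}^{\un{m}}(k_{\un{n}})X_{\un{m}}}\subseteq\L_F$ via invariance is also correct. (Your sentence justifying the trivial direction actually describes the converse. That direction only needs that the quotient norm at a single $\un{n}$ is an infimum over $k_{\un{n}}$.)

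The gap is in the compactness step. You claim that $b\in\bigcap_i\phi_{\un{i}}^{-1}(\K(X_{\un{i}}))$ together with strong compact alignment gives $\phi_{\un{r}}(b)\in\K(X_{\un{r}})$ for every $\un{r}$. This is false, and the route you sketch cannot prove it. The displayed identity only concerns $\un{0}\leq\un{n}\leq\un{1}_F$, and (\ref{Eq:sca}) only passes from $\un{n}$ to $\un{n}+\un{i}$ when $\un{n}\perp\un{i}$, so the induction never reaches $\un{r}=2\un{i}$. This is not a corner case, since $\un{r}=\un{m}-\un{n}$ ranges over all $\un{r}\perp F$. For a concrete failure, take $d=1$, where (\ref{Eq:sca}) is vacuous. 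Use the correspondence of a directed graph with one edge $e$ from $w$ to $v$ and infinitely many edges $f_n$ with range $w$. Then $\phi_{1}(\delta_v)=\Theta_{\delta_e,\delta_e}$ is compact. However, $\phi_{2}(\delta_v)$ is the projection onto $\ol{\spn}\{\delta_{ef_n}\mid n\in\bN\}$, which is not compact.

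The missing idea is that invariance must be used \emph{inside} the compactness step, not only afterwards. In the example, $\delta_v\in\L_\mt$ would force $\delta_w=\sca{\delta_e,\delta_v\delta_e}\in\L_\mt$, contradicting $\L_\mt\subseteq\phi_{1}^{-1}(\K(X_{1}))$. The repair uses only ingredients you already have.

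Factor $\iota_{\un{n}}^{\un{m}}$ into single steps $\iota_{\un{p}}^{\un{p}+\un{j}}$ with $j\in\supp(\un{m}-\un{n})$, so $j\notin F$. For one step with $c\in\L_F$, $\iota_{\un{p}}^{\un{p}+\un{j}}(\Theta_{\xi c,\eta})$ equals $\Theta_\xi\phi_{\un{j}}(c)\Theta_\eta^*$ up to $u_{\un{p},\un{j}}$. This is compact because $\phi_{\un{j}}(c)\in\K(X_{\un{j}})$ is exactly the hypothesis on $\L_F$, and a rank-one $\Theta_{x,y}$ goes to $\Theta_{\xi\otimes x,\eta\otimes y}$. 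Your inner-product computation with $\un{r}=\un{j}\perp F$ then places the image in $\K(X_{\un{p}+\un{j}}\L_F)$, which is what allows the next step. If $\un{n}=\un{0}$, the first step is simply $\phi_{\un{0}}(c)\mapsto\phi_{\un{j}}(c)$ for $c\in\L_F$, handled the same way. Induction on $|\un{m}-\un{n}|$ then gives $\iota_{\un{n}}^{\un{m}}(\K(X_{\un{n}}\L_F))\subseteq\K(X_{\un{m}}\L_F)$ whenever $\un{m}-\un{n}\perp F$.

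Equivalently, prove by induction on $|\un{r}|$ that $\phi_{\un{r}}(\L_F)\subseteq\K(X_{\un{r}}\L_F)$ for all $\un{r}\perp F$, using invariance and (\ref{Eq: comp}) at each stage. Your formula $\Theta_\xi\phi_{\un{r}}(b)\Theta_\eta^*$ then works as written. Neither version actually uses (\ref{Eq:sca}).
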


When $\L\subseteq\I$ is invariant, partially ordered and consists of ideals, the same is true of $\L^{(1)}$ and moreover $\L\subseteq\L^{(1)}$ and $\fJ_\L=\fJ_{\L^{(1)}}$ \cite[Proposition 3.4.5]{DeK24}.
Maximality of $\L$ is realised exactly when we have the reverse containment $\L^{(1)}\subseteq\L$.

\begin{theorem}\label{T:m fam v2}\cite[Theorem 3.4.6]{DeK24}
Let $X$ be a strong compactly aligned product system with coefficients in a C*-algebra $A$ and suppose that $\L$ is a $2^d$-tuple of $X$ satisfying $\L\subseteq\I$.
Then $\L$ is maximal if and only if it satisfies the following four conditions:
\begin{enumerate}
\item $\L$ consists of ideals,
\item $\L$ is invariant,
\item $\L$ is partially ordered,
\item $\L^{(1)} \subseteq \L$.
\end{enumerate}
\end{theorem}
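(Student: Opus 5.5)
The plan is to prove the two implications separately. Throughout, $(\pi,t):=(\pi_X^\L,t_X^\L)$ denotes the universal $\L$-relative CNP-representation. Since $\L\subseteq\I$, it is injective by \cite[Proposition 3.2.1]{DeK24}, and it admits a gauge action, being the representation of the equivariant quotient $\N\T_X/\fJ_\L$. The central object is the $2^d$-tuple $\M:=\L^{(\pi,t)}$. By \cite[Proposition 3.1.18]{DeK24} it is maximal, contained in $\I$, and consists of ideals; by \cite[Proposition 3.1.14]{DeK24} it is invariant and partially ordered.

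The first step is to show that $\M$ is the maximal $2^d$-tuple inducing $\fJ_\L$. If $a\in\L_F$, then $\pi(a)q_F=0$. By Proposition \ref{P:prod cai} this writes $\pi(a)$ as a combination of the $\psi_\un{n}(\phi_\un{n}(a))$ with $\un{0}\neq\un{n}\leq\un{1}_F$, so $a\in\M_F$; hence $\L\subseteq\M$ and $\fJ_\L\subseteq\fJ_\M$. Conversely, if $a\in\M_F$, then $\pi(a)\in B^{(\pi,t)}_{(\un{0},\un{1}_F]}$. Proposition \ref{P:DK3.3} then gives $\pi(a)q_F=0$, so the canonical map $\N\T_X\to\ca(\pi,t)=\N\O(\L,X)$ kills $\fJ_\M$, whence $\fJ_\M\subseteq\fJ_\L$. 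So $\fJ_\M=\fJ_\L$ with $\L\subseteq\M$, and by uniqueness of the maximal $2^d$-tuple \cite[Proposition 3.1.9]{DeK24}, $\M$ is it.

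For the forward direction, if $\L$ is maximal, the first step together with Definition \ref{D:maximal} gives $\L=\M$. This yields (i)--(iii) immediately. For (iv), \cite[Proposition 3.4.5]{DeK24} gives $\L\subseteq\L^{(1)}$ and $\fJ_{\L^{(1)}}=\fJ_\L$. Moreover $\L^{(1)}$ is relative, since $\L^{(1)}\subseteq\I$ by construction: its middle components are cut down by $\I_F$, and $\L^{(1)}_{[d]}=\L_{[d]}\subseteq\I_{[d]}$. Maximality therefore forces $\L^{(1)}=\L$.

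For the converse, assume (i)--(iv). It suffices to show $\M\subseteq\L$, which I would prove by downward induction on $|F|$ that $\M_F\subseteq\L_F$, with two separate cases at the ends.

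\emph{The case $F=\mt$.} Here $\M_\mt=\ker\pi=\{0\}$, which is contained in $\L_\mt$.

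\emph{The case $F=[d]$.} Take $a\in\M_{[d]}$, so that $\pi(a)q_{[d]}=0$, i.e.\ $\ol{\pi}_X(a)\ol{q}_{X,[d]}\in\fJ_\L$. I would compress the gauge-invariant ideal $\fJ_\L$ to the fixed point algebra. Its fixed-point part is the closed span of $\ol{t}_{X,\un{n}}(X_\un{n})\,\ol{\pi}_X(\L_D)\ol{q}_{X,D}\,\ol{t}_{X,\un{m}}(X_\un{m})^*$ with $|\un{n}|=|\un{m}|$. Multiplying on the left and right by $\ol{q}_{X,[d]}$ and applying Proposition \ref{P:pf reducing}, only the terms with $\un{n}=\un{m}=\un{0}$ and $D\subseteq[d]$ survive. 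Injectivity on the $\un{0}$-level, together with partial ordering, should then give $a\in\L_{[d]}$.

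\emph{The inductive step, $\mt\neq F\subsetneq[d]$.} Take $a\in\M_F$ and aim to show $a\in\L^{(1)}_F$, which lies in $\L_F$ by (iv). The three required memberships are as follows.
\begin{itemize}
\item $a\in\I_F$: this holds because $\M\subseteq\I$.
\item $a\in\L_{\inv,F}$: invariance of $\M$ and partial ordering place $a$ in $\M_{\inv,F}$. The induction hypothesis gives $\M_D=\L_D$ for all $D\supsetneq F$, so $\M_{\inv,F}=\L_{\inv,F}$.
\item $a\in\L_{\lim,F}$: this is the step I expect to be the main obstacle.
\end{itemize}

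For the limit condition, the plan is as follows. Write $\pi(a)=\sum_{\un{0}\neq\un{n}\leq\un{1}_F}\psi_\un{n}(k_\un{n})$ and lift this identity to $\N\T_X$ modulo $\fJ_\L$. Then, for $\un{m}\perp F$, multiply on the left by $\ol{t}_{X,\un{m}}(X_\un{m})^*$ and on the right by $\ol{t}_{X,\un{m}}(X_\un{m})$, and cut down by $\ol{q}_{X,F}$ using Proposition \ref{P:pf reducing}. The goal is an estimate showing that $\phi_\un{m}(a)$ lies within $\varepsilon$ of $\K(X_\un{m}\L_F)$ for suitable $\un{m}\perp F$, and Proposition \ref{P:lim perp} reduces the limit condition to exactly such a single estimate. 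The difficulty is controlling the error terms from $\fJ_\L$: these are approximated by finite sums of the spanning elements above, and components with $D\neq F$ must be absorbed using invariance and the already-established equalities $\M_D=\L_D$ for $D\supsetneq F$. If this direct compression argument proves too messy, I would instead pass to the quotient product system $[X]_{\L_F}$ (Proposition \ref{P:qntrprodsys}) and exploit the fact, from Lemma \ref{L:Kat07}, that $\K(X_\un{m}\L_F)$ is exactly the kernel of $[\,\cdot\,]_{\L_F}$ on $\K(X_\un{m})$.
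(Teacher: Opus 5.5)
Your reduction is sound. With $(\pi,t)=(\pi_X^\L,t_X^\L)$ and $\M=\L^{(\pi,t)}$, the following steps are all correct: $\M$ is the maximal $2^d$-tuple inducing $\fJ_\L$ with $\L\subseteq\M$; the forward direction follows via \cite[Proposition 3.4.5]{DeK24}; the case $F=\mt$ holds; and in the inductive step $a\in\I_F$ and $a\in\L_{\inv,F}$. The genuine gap is the step you flag yourself, namely $\M_F\subseteq\L_{\lim,F}$. This is the real content of the converse. The limit is taken modulo $\K(X_\un m\L_F)$, not $\K(X_\un m\M_F)$, so knowing $\M=\M^{(1)}$ only gives $a\in\M_{\lim,F}$, which is weaker.

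Your sketch does not supply this step. For $\un m\perp F$, compressing by $\ol t_{X,\un m}(\xi)^*(\,\cdot\,)\ol t_{X,\un m}(\eta)$ turns $\ol\pi_X(a)\ol q_{X,F}$ into $\ol\pi_X(\sca{\xi,a\eta})\ol q_{X,F}$. That returns you to the coefficient algebra and gives no bound on $\|\phi_\un m(a)+\K(X_\un m\L_F)\|$, which is a norm in $\L(X_\un m)$. Also, the error terms with $D\neq F$ are not controlled by the inductive hypothesis: their coefficients already lie in $\L_D$, so $\M_D=\L_D$ adds nothing there.

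What is missing is a concrete model in which $\ol q_{X,F}$ is visible, such as the Fock representation on $\oplus_{\un r}X_\un r$. Computing the alternating sum of Proposition \ref{P:prod cai} fibrewise shows that $\ol\pi_X(c)\ol q_{X,D}$ acts on $X_\un r$ as $\phi_\un r(c)$ if $\un r\perp D$, and as $0$ otherwise.

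Now approximate $\ol\pi_X(a)\ol q_{X,F}$ within $\varepsilon$ by a finite sum $\sum_j\ol t_{X,\un n_j}(\xi_j)\ol\pi_X(b_j)\ol q_{X,D_j}\ol t_{X,\un n_j}(\eta_j)^*$ from the fixed-point part of $\fJ_\L$, with $b_j\in\L_{D_j}$ and $\un n_j\perp D_j$. For the $\bT^d$-action this requires equal degrees, not just $|\un n|=|\un m|$. Restrict to a fibre $X_\un m$ with $\un m\perp F$ and $\un m\geq\un n_j+\un 1_{[d]\setminus F}$ for every $j$ with $\un n_j\perp F$; the remaining summands vanish on such fibres. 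On $X_\un m$:
\begin{itemize}
\item $\ol\pi_X(a)\ol q_{X,F}$ acts as $\phi_\un m(a)$;
\item every summand with $D_j\not\subseteq F$ vanishes, because $\supp(\un m-\un n_j)=[d]\setminus F$ meets $D_j$, so $\ol q_{X,D_j}$ kills $X_{\un m-\un n_j}$;
\item every summand with $D_j\subseteq F$ acts as $\iota_{\un n_j}^{\un m}(\Theta_{\xi_jb_j,\eta_j})$, with $b_j\in\L_F$ by partial ordering.
\end{itemize}
Invariance together with $\L_F\subseteq\I_F$ places these operators in $\K(X_\un m\L_F)$. In the proper case this is immediate from Proposition \ref{P:propimpsca} and Lemma \ref{L:Kat07}; in general it follows by factoring through $\phi_{\un m-\un n_j}(\L_F)$ as in Lemma \ref{L:lance}. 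Hence $\|\phi_\un m(a)+\K(X_\un m\L_F)\|\leq\varepsilon$, and Proposition \ref{P:lim perp} gives $a\in\L_{\lim,F}$. Note that no inductive hypothesis is needed for this step.

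The same model is what justifies your ``injectivity on the $\un 0$-level'' at $F=[d]$, which as written is only asserted. There, $\ol\pi_X(c)\ol q_{X,[d]}$ acts on $X_\un 0=A$ as left multiplication by $c$. Without an argument of this kind, the converse implication is not proved.
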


Dealing with the ``non-injective" case now follows a similar trajectory to the preceding reasoning, but argues on the level of quotient product systems instead.
Intuitively, to parametrise the gauge-invariant ideals $\fJ$ of $\N\T_X$ such that $(Q_\fJ\circ\ol{\pi}_X,Q_\fJ\circ\ol{t}_X)$ is non-injective, we quotient out $\ker Q_\fJ\circ\ol{\pi}_X$ to obtain an injective Nica-covariant representation of a certain quotient product system.
This representation inherits the gauge action of $(Q_\fJ\circ\ol{\pi}_X,Q_\fJ\circ\ol{t}_X)$ \cite[Lemma 4.1.9 (iii)]{DeK24} and so we can exploit the work that we have already done to complete the parametrisation.
This ethos underpins \cite[Section 4]{DeK24}, whose key results we now summarise.

\begin{definition}\label{D:JF}\cite[Definition 4.1.1]{DeK24}
Let $X$ be a strong compactly aligned product system with coefficients in a C*-algebra $A$.
Fix $\mt\neq F\subseteq[d]$ and let $I\subseteq A$ be an ideal. 
We define the following subsets of $A$:
\begin{enumerate}
\item $X_F^{-1}(I):=\bigcap\{X_\un{n}^{-1}(I)\mid \un{0}\neq\un{n}\leq\un{1}_F\} = \{a \in A \mid \sca{X_{\un{n}}, a X_{\un{n}}} \subseteq I \foral \un{0} \neq \un{n} \leq \un{1}_F\}$,
\item $J_F(I,X):=\{a\in A\mid [\phi_\un{i}(a)]_I\in\K([X_\un{i}]_I)\foral i\in[d], aX_F^{-1}(I)\subseteq I\}$.
\end{enumerate}
\end{definition}

Both $X_F^{-1}(I)$ and $J_F(I,X)$ are ideals of $A$, and $I\subseteq J_F(I,X)$ whenever $I$ is positively invariant.
These objects play similar roles to the ideals $X^{-1}(I)$ and $J(I,X)$ for a C*-correspondence $X$ (see the discussion preceding Theorem \ref{T:Kat par}).
When $I$ is positively invariant, the ideals $X_F^{-1}(I)$ and $J_F(I,X)$ relate to the product system structure of $[X]_I$ (which is itself strong compactly aligned by Proposition \ref{P:qntrprodsys}) in the following sense.

\begin{proposition}\label{P:quo ideal}\cite[Lemma 4.1.3]{DeK24}
Let $X$ be a strong compactly aligned product system with coefficients in a C*-algebra $A$ and let $I\subseteq A$ be an ideal that is positively invariant for $X$.
Then the following hold for all $\mt\neq F\subseteq[d]$:
\begin{enumerate}
\item $X_F^{-1}(I)=[ \hspace{1pt} \cdot \hspace{1pt} ]_I^{-1}(\bigcap\{\ker[\phi_\un{i}]_I\mid i\in F\})$.
\item $J_F(I,X)=[ \hspace{1pt} \cdot \hspace{1pt} ]_I^{-1}(\J_F([X]_I))$.
\item $X_F^{-1}(I)\cap J_F(I,X)=I$.
\end{enumerate}
\end{proposition}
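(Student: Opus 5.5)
We prove the three items in order, with (ii) and (iii) obtained as formal consequences of (i). Throughout, write $[\hspace{1pt}\cdot\hspace{1pt}]_I$ for the quotient maps. We identify $[A]_I$ with $[X_{\un{0}}]_I$, and we use that $[X]_I$ is a product system whose left actions are $[\phi_\un{n}]_I$ (Proposition \ref{P:qntrprodsys}).

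\emph{Item (i).} Fix $i\in[d]$ and $a\in A$. We have $[\phi_\un{i}]_I([a]_I)=0$ if and only if $\phi_\un{i}(a)\xi\in X_\un{i}I$ for every $\xi\in X_\un{i}$. Since $\eta\in X_\un{i}I$ if and only if $\sca{\eta,\eta}\in I$ (the quotient norm agrees with the inner-product norm by \cite[Lemma 1.5]{Kat07}), this is equivalent to $\sca{a\xi,a\xi}\in I$ for all $\xi$. We claim this is in turn equivalent to $\sca{X_\un{i},aX_\un{i}}\subseteq I$, i.e., to $a\in X_\un{i}^{-1}(I)$. In one direction, if $\sca{X_\un{i},aX_\un{i}}\subseteq I$ then $\sca{a\xi,a\xi}=\sca{\xi,a^*a\xi}\in I$. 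Conversely, if $a\xi\in X_\un{i}I$ for all $\xi$, then $\sca{\eta,a\xi}\in\sca{X_\un{i},X_\un{i}I}\subseteq I$. Hence $[\hspace{1pt}\cdot\hspace{1pt}]_I^{-1}(\bigcap_{i\in F}\ker[\phi_\un{i}]_I)=\bigcap_{i\in F}X_\un{i}^{-1}(I)$.

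It remains to show that $\bigcap_{i\in F}X_\un{i}^{-1}(I)=X_F^{-1}(I)$. The inclusion $\supseteq$ is trivial. For $\subseteq$, take $\un{0}\neq\un{n}\leq\un{1}_F$ and pick $i\in\supp\un{n}$, so that $\un{n}=\un{i}+\un{m}$ and $X_\un{n}\cong X_\un{i}\otimes_A X_\un{m}$ via the multiplication unitary. On simple tensors we get
\[
\sca{\xi_\un{i}\xi_\un{m},a\eta_\un{i}\eta_\un{m}}=\sca{\xi_\un{m},\sca{\xi_\un{i},a\eta_\un{i}}\eta_\un{m}}\in\sca{X_\un{m},IX_\un{m}}\subseteq I,
\]
using positive invariance of $I$. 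Continuity and linearity then give $\sca{X_\un{n},aX_\un{n}}\subseteq I$. This reduction via the product structure together with positive invariance is the only non-formal step, and it is the one we expect to be the main point.

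\emph{Item (ii).} By definition, $\J_F([X]_I)=(\bigcap_{i\in F}\ker[\phi_\un{i}]_I)^\perp\cap\bigcap_{i\in[d]}[\phi_\un{i}]_I^{-1}(\K([X_\un{i}]_I))$. Pulling back along the surjection $A\to[A]_I$, the compactness part gives exactly the first condition in $J_F(I,X)$. For the annihilator part, $[a]_I$ annihilates $\bigcap_{i\in F}\ker[\phi_\un{i}]_I$ if and only if $[ab]_I=0$ for every $b$ in its preimage. By (i) that preimage is $X_F^{-1}(I)$, so the condition reads $aX_F^{-1}(I)\subseteq I$. This is the second condition in $J_F(I,X)$, which proves (ii).

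\emph{Item (iii).} For $\supseteq$: positive invariance gives $I\subseteq X_F^{-1}(I)$, and $[a]_I=0$ for $a\in I$ puts $a$ in $J_F(I,X)$ by (ii). For $\subseteq$: take $a$ in both ideals. Since $X_F^{-1}(I)$ is an ideal, $a^*\in X_F^{-1}(I)$. Then $aa^*\in aX_F^{-1}(I)\subseteq I$, and hence $a\in I$ because $I$ is a closed ideal.
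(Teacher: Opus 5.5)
Your proof is correct and follows the natural route behind the cited \cite[Lemma 4.1.3]{DeK24} (the paper gives no proof of its own here). You establish inline the two ingredients this paper relies on elsewhere, namely $X_F^{-1}(I)=\bigcap_{i\in F}X_{\un{i}}^{-1}(I)$ for positively invariant $I$ (\cite[Lemma 4.1.2]{DeK24}, invoked in Proposition \ref{P:Tfamcond1}) and the characterisation $[a]_I\in\ker[\phi_{\un{i}}]_I\iff a\in X_{\un{i}}^{-1}(I)$ as in (\ref{Eq:kerchar}), after which (ii) is a pullback along $[\hspace{1pt}\cdot\hspace{1pt}]_I$ and your $aa^*\in I$ argument for (iii) is the elementwise form of $K\cap K^\perp=\{0\}$ in $[A]_I$ for $K=\bigcap_{i\in F}\ker[\phi_{\un{i}}]_I$.
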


With these objects in hand, we are ready to define the parametrising objects of \cite{DeK24}.

\begin{definition}\label{D:NT tuple}\cite[Definition 4.1.4]{DeK24}
Let $X$ be a strong compactly aligned product system with coefficients in a C*-algebra $A$ and let $\L$ be a $2^d$-tuple of $X$.
We say that $\L$ is an \emph{NT-$2^d$-tuple (of $X$)} if the following four conditions hold:
\begin{enumerate}
\item $\L$ consists of ideals and $\L_F\subseteq J_F(\L_\mt,X)$ for all $\mt\neq F\subseteq[d]$,
\item $\L$ is $X$-invariant,
\item $\L$ is partially ordered,
\item $ [ \hspace{1pt} \cdot \hspace{1pt} ]_{\L_\mt}^{-1} \big( [\L_F]_{\L_\mt}^{(1)} \big) \subseteq \L_F$ for all $F\subseteq[d]$, where $[\L_F]_{\L_\mt}=\L_F/\L_\mt\subseteq [A]_{\L_\mt}$.
\end{enumerate}
\end{definition}

To make sense of condition (iv), first note that conditions (i) and (ii) imply that $\L_\mt$ is an ideal of $A$ that is positively invariant for $X$.
Hence we can make sense of $[X]_{\L_\mt}$ as a strong compactly aligned product system with coefficients in $[A]_{\L_\mt}$ by Proposition \ref{P:qntrprodsys}.
Condition (iii) implies that $\L_\mt\subseteq\L_F$ for all $F\subseteq[d]$, and so by condition (i) we have that $[\L]_{\L_\mt}:=\{[\L_F]_{\L_\mt}\}_{F\subseteq[d]}$ is a $2^d$-tuple of $[X]_{\L_\mt}$ that consists of ideals.
Applying condition (i) and Proposition \ref{P:quo ideal} in tandem gives that $[\L]_{\L_\mt}\subseteq\J([X]_{\L_\mt})$, while condition (ii) implies that $[\L]_{\L_\mt}$ is $[X]_{\L_\mt}$-invariant.
Hence we have that $[\L]_{\L_\mt}\subseteq\I([X]_{\L_\mt})$ by \cite[Lemma 3.2.3]{DeK24}, and so we can consider the family $[\L]_{\L_\mt}^{(1)}$.
Note also that condition (iv) holds automatically for $F= \mt$ and $F=[d]$.

When $X$ is proper, condition (iv) admits the following simplification.

\begin{proposition}\label{P:NTcom}\cite[Proposition 4.1.5]{DeK24}
Let $X$ be a proper product system over $\bZ_+^d$ with coefficients in a C*-algebra $A$.
Then a $2^d$-tuple $\L$ of $X$ is an NT-$2^d$-tuple of $X$ if and only if it satisfies conditions (i)-(iii) of Definition \ref{D:NT tuple}
and
\[
\bigg(\bigcap_{\un{n}\perp F}X_\un{n}^{-1}(J_F(\L_\mt,X))\bigg)\cap\L_{\inv,F}\cap\L_{\lim,F}\subseteq\L_F\foral\mt\neq F\subsetneq[d].
\]
\end{proposition}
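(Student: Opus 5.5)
Since conditions (i)--(iii) are common to both sides, the plan is to assume them throughout and show that, for each $\mt\neq F\subsetneq[d]$, the left-hand side of condition (iv) of Definition \ref{D:NT tuple} equals the left-hand side of the displayed inclusion. Condition (iv) holds automatically for $F=\mt$ and $F=[d]$, so nothing else is needed.

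\textbf{Setup.} Write $Y:=[X]_{\L_\mt}$, $B:=[A]_{\L_\mt}$ and $q:=[\hspace{1pt}\cdot\hspace{1pt}]_{\L_\mt}$. By the remarks after Definition \ref{D:NT tuple}, $[\L]_{\L_\mt}\subseteq\I(Y)$. Hence, for $\mt\neq F\subsetneq[d]$,
\[
q^{-1}\big([\L_F]_{\L_\mt}^{(1)}\big)=q^{-1}(\I_F(Y))\cap q^{-1}\big([\L]_{\L_\mt,\inv,F}\big)\cap q^{-1}\big([\L]_{\L_\mt,\lim,F}\big),
\]
since taking preimages commutes with intersections. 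I will identify the three preimages separately.

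\textbf{The general preimage rule.} For any ideal $K\subseteq B$ and any $\un{n}$, I claim $q^{-1}(Y_\un{n}^{-1}(K))=X_\un{n}^{-1}(q^{-1}(K))$. This follows from $\sca{[\xi]_{\L_\mt},[a]_{\L_\mt}[\eta]_{\L_\mt}}=[\sca{\xi,a\eta}]_{\L_\mt}$.

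\textbf{First term.} $Y$ is proper, because $[\phi_\un{n}(a)]_{\L_\mt}\in\K(Y_\un{n})$ by Lemma \ref{L:Kat07}. Proposition \ref{P:quo ideal} (ii) gives $q^{-1}(\J_F(Y))=J_F(\L_\mt,X)$. Since $\I_F(Y)=\bigcap_{\un{n}\perp F}Y_\un{n}^{-1}(\J_F(Y))$, the rule above yields
\[
q^{-1}(\I_F(Y))=\bigcap_{\un{n}\perp F}X_\un{n}^{-1}(J_F(\L_\mt,X)).
\]

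\textbf{Second term.} Partial ordering gives $\L_\mt\subseteq\L_D$ for every $D$, so $q^{-1}([\L_D]_{\L_\mt})=\L_D$. Applying the same rule to $\bigcap_{F\subsetneq D}[\L_D]_{\L_\mt}$ gives $q^{-1}([\L]_{\L_\mt,\inv,F})=\L_{\inv,F}$.

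\textbf{Third term (the main step).} I expect the $\lim$-term to be the main obstacle. I will show that
\[
\|\phi_\un{n}(a)+\K(X_\un{n}\L_F)\|=\|[\phi_\un{n}(a)]_{\L_\mt}+\K(Y_\un{n}[\L_F]_{\L_\mt})\| \quad\text{for all } \un{n}\perp F \text{ and } a\in A,
\]
where the left-hand side is the norm in $\K(X_\un{n})/\K(X_\un{n}\L_F)$, which makes sense because $X$ is proper. Taking limits over $\un{n}\perp F$ then gives $q^{-1}([\L]_{\L_\mt,\lim,F})=\L_{\lim,F}$, completing the proof.

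To prove the norm equality I will proceed in three steps.
\begin{enumerate}
\item By Lemma \ref{L:Kat07}, $q$ maps $\K(X_\un{n})$ onto $\K(Y_\un{n})$ with kernel $\K(X_\un{n}\L_\mt)$.
\item $\K(X_\un{n}\L_\mt)\subseteq\K(X_\un{n}\L_F)$, since $\L_\mt\subseteq\L_F$. Checked via (\ref{Eq: comp}).
\item $q(\K(X_\un{n}\L_F))=\K(Y_\un{n}[\L_F]_{\L_\mt})$. This uses that $[X_\un{n}\L_F]_{\L_\mt}=Y_\un{n}[\L_F]_{\L_\mt}$ together with $[\Theta_{\xi,\eta}]_{\L_\mt}=\Theta_{[\xi],[\eta]}$.
\end{enumerate}
Together these give a $*$-isomorphism
\[
\K(X_\un{n})/\K(X_\un{n}\L_F)\cong\K(Y_\un{n})/\K(Y_\un{n}[\L_F]_{\L_\mt})
\]
sending the class of $\phi_\un{n}(a)$ to the class of $[\phi_\un{n}(a)]_{\L_\mt}$. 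Being a $*$-isomorphism, it is isometric, which is exactly the norm equality. Step (iii) is where most care is needed: the image of $\K(X_\un{n}\L_F)$ must be checked to be closed, which holds because $q$ restricted to $\K(X_\un{n})$ is a $*$-homomorphism and so has closed range on ideals.
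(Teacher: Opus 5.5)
Your proposal is correct: once (i)--(iii) are assumed, the sets in condition (iv) and in the displayed inclusion coincide for every $\mt\neq F\subsetneq[d]$. You show this by computing the preimage under $[\hspace{1pt}\cdot\hspace{1pt}]_{\L_\mt}$ of each of $\I_F([X]_{\L_\mt})$, $[\L]_{\inv,F}$ and $[\L]_{\lim,F}$, with properness used only to identify the quotient norms through Lemma \ref{L:Kat07} and the First Isomorphism Theorem. This paper does not reprove the statement (it quotes it from \cite{DeK24}), but your argument is the natural one, and the norm identification is the same device the paper uses in the proof of Proposition \ref{P:TimpliesNT}.
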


As advertised, the NT-$2^d$-tuples of $X$ parametrise the gauge-invariant ideals of $\N\T_X$.
More precisely, we have the following result.

\begin{theorem}\label{T:NT param}\cite[Theorem 4.2.3]{DeK24}
Let $X$ be a strong compactly aligned product system with coefficients in a C*-algebra $A$.
Then there exists an order-preserving bijection between the set of NT-$2^d$-tuples of $X$ and the set of gauge-invariant ideals of $\N\T_X$.
\end{theorem}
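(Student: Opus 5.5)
The plan is to construct mutually inverse maps. In one direction a gauge-invariant ideal $\fJ\subseteq\N\T_X$ is sent to the $2^d$-tuple $\L^{\fJ}:=\L^{(Q_\fJ\circ\ol{\pi}_X,Q_\fJ\circ\ol{t}_X)}$. In the other direction an NT-$2^d$-tuple $\L$ is sent to the kernel of a canonical $*$-epimorphism $\N\T_X\to\N\O([\L]_{\L_\mt},[X]_{\L_\mt})$. Both maps are visibly order-preserving:
\begin{itemize}
\item if $\fJ_1\subseteq\fJ_2$, then $Q_{\fJ_2}$ factors through $Q_{\fJ_1}$, and the preimages of the cores $B_{(\un{0},\un{1}_F]}$ grow;
\item if $\L\subseteq\L'$, then the relations defining the second quotient are contained in those defining the first.
\end{itemize}
So the real content is well-definedness and bijectivity.

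\textbf{Step 1: $\L^{\fJ}$ is an NT-$2^d$-tuple.} Write $(\pi,t):=(Q_\fJ\circ\ol{\pi}_X,Q_\fJ\circ\ol{t}_X)$, which admits a gauge action, and set $I:=\L^{\fJ}_\mt=\ker\pi$.
\begin{itemize}
\item By \cite[Proposition 3.1.14]{DeK24}, $\L^{\fJ}$ consists of ideals, is invariant and is partially ordered. In particular $I$ is positively invariant, so $[X]_I$ is a strong compactly aligned product system by Proposition \ref{P:qntrprodsys}.
\item Since $\ker\pi=I$, each $t_\un{n}$ vanishes on $X_\un{n}I$, because $t_\un{n}(\xi a)^*t_\un{n}(\xi a)=\pi(a^*\sca{\xi,\xi}a)=0$. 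Hence $(\pi,t)$ descends to an \emph{injective} Nica-covariant representation $(\dot\pi,\dot t)$ of $[X]_I$. It has the same image $\ca(\pi,t)$, so it inherits the gauge action, and it has the same cores.
\item Consequently $[\L^{\fJ}_F]_I=\L^{(\dot\pi,\dot t)}_F$ for all $F$. By \cite[Proposition 3.1.18]{DeK24} this tuple is maximal and contained in $\I([X]_I)$.
\item Pulling back through Proposition \ref{P:quo ideal}(ii) gives $\L^{\fJ}_F\subseteq J_F(I,X)$, which is condition (i) of Definition \ref{D:NT tuple}.
\item Theorem \ref{T:m fam v2}(iv) applied to $[\L^{\fJ}]_I$ gives condition (iv).
\end{itemize}

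\textbf{Step 2: the inverse map.} Let $\L$ be an NT-$2^d$-tuple and set $I:=\L_\mt$.
\begin{itemize}
\item The discussion after Definition \ref{D:NT tuple} shows that $[\L]_I\subseteq\I([X]_I)$ consists of ideals and is invariant and partially ordered.
\item Condition (iv) translates to $[\L]_I^{(1)}\subseteq[\L]_I$, so $[\L]_I$ is maximal by Theorem \ref{T:m fam v2}.
\item The universal $[\L]_I$-relative CNP-representation of $[X]_I$, composed with the quotient maps $[\hspace{1pt}\cdot\hspace{1pt}]_I$ on $A$ and on the fibres, is a Nica-covariant representation of $X$. Let $\Phi_\L\colon\N\T_X\to\N\O([\L]_I,[X]_I)$ be the canonical $*$-epimorphism it induces.
\item Define $\fJ^\L:=\ker\Phi_\L$. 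This ideal is gauge-invariant because $\Phi_\L$ intertwines the gauge actions.
\end{itemize}

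\textbf{Step 3: the two composites are identities.}
\begin{itemize}
\item \emph{$\L^{\fJ^\L}=\L$.} Since $[\L]_I\subseteq\I([X]_I)$, the universal $[\L]_I$-relative representation is injective by \cite[Proposition 3.2.1]{DeK24}. Hence $\L^{\fJ^\L}_\mt=[\hspace{1pt}\cdot\hspace{1pt}]_I^{-1}(0)=I$. By Theorem \ref{T:d GIUT M}, this universal representation has associated tuple exactly $[\L]_I$, and lifting gives $\L^{\fJ^\L}=\L$.
\item \emph{$\fJ^{\L^\fJ}=\fJ$.} Apply Theorem \ref{T:d GIUT M} to $(\dot\pi,\dot t)$ from Step 1. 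It admits a gauge action and satisfies $\L^{(\dot\pi,\dot t)}=[\L^\fJ]_I$, so $\ca(\pi,t)\cong\N\O([\L^\fJ]_I,[X]_I)$ canonically. Therefore $\ker Q_\fJ=\ker\Phi_{\L^\fJ}$.
\end{itemize}

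The main obstacle I anticipate is the identification of cores in Step 1. One has to check that $\psi_\un{n}$ for $X$ and the corresponding map for $[X]_I$ have the same ranges, using Lemma \ref{L:Kat07} to see that $\K(X_\un{n})\to\K([X_\un{n}]_I)$ is surjective. One also has to check that $\pi^{-1}(B_{(\un{0},\un{1}_F]})$ is precisely the $[\hspace{1pt}\cdot\hspace{1pt}]_I$-preimage of $\dot\pi^{-1}(B_{(\un{0},\un{1}_F]})$. Both points need care with the non-degeneracy conventions but no new ideas.
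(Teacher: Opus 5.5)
Your proposal is correct and follows essentially the same route as the cited proof in \cite{DeK24} that the paper outlines: you send $\fJ$ to $\L^{(Q_\fJ\circ\ol{\pi}_X,Q_\fJ\circ\ol{t}_X)}$, pass to the injective representation of $[X]_{\L_\mt}$ (which inherits the gauge action), and close the loop with Theorem \ref{T:m fam v2} and the Gauge-Invariant Uniqueness Theorem \ref{T:d GIUT M}. The one point that is less ``visible'' than you claim is that $\L\mapsto\ker\Phi_\L$ preserves order: the two quotients live over different product systems $[X]_{\L_\mt}$ and $[X]_{\L'_\mt}$, so you need the routine check that the $\L'$-representation of $X$ kills $\L_\mt$ and satisfies the $[\L]_{\L_\mt}$-relative CNP relations, which makes $\Phi_{\L'}$ factor through $\Phi_\L$.
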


It should be noted that Theorem \ref{T:d GIUT M} is used in the proof of Theorem \ref{T:NT param}.
The lattice operations on the set of NT-$2^d$-tuples that promote the bijection of Theorem \ref{T:NT param} to a lattice isomorphism are clarified in \cite[Propositions 4.2.6 and 4.2.7]{DeK24}.
With minor alterations, Theorem \ref{T:NT param} can be modified to parametrise the gauge-invariant ideals of $\N\O(\K,X)$ for any relative $2^d$-tuple $\K$ of $X$.
In particular, we obtain from this a parametrisation of the gauge-invariant ideals of $\N\O_X$ by taking $\K=\I$.

\begin{definition}\label{D:rel NO tuple}\cite[Definition 4.2.8]{DeK24}
Let $X$ be a strong compactly aligned product system with coefficients in a C*-algebra $A$.
Let $\K$ be a relative $2^d$-tuple of $X$ and let $\L$ be a  $2^d$-tuple of $X$.
We say that $\L$ is a \emph{$\K$-relative NO-$2^d$-tuple (of $X$)} if $\L$ is an NT-$2^d$-tuple of $X$ and $\K \subseteq \L$.
We refer to the $\I$-relative NO-$2^d$-tuples of $X$ simply as \emph{NO-$2^d$-tuples (of $X$)}.
\end{definition}

The lattice operations on the set of NT-$2^d$-tuples restrict appropriately to the set of $\K$-relative NO-$2^d$-tuples for an arbitrary relative $2^d$-tuple $\K$ \cite[Proposition 4.2.10]{DeK24}.
With this, we obtain the main parametrisation result of \cite{DeK24} at full generality.

\begin{theorem}\label{T:NO param}\cite[Theorem 4.2.11]{DeK24}
Let $X$ be a strong compactly aligned product system with coefficients in a C*-algebra $A$ and let $\K$ be a relative $2^d$-tuple of $X$.
Then there exists an order-preserving bijection between the set of $\K$-relative NO-$2^d$-tuples of $X$ and the set of gauge-invariant ideals of $\N\O(\K,X)$.
\end{theorem}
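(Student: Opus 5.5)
The statement is Theorem \ref{T:NO param}, quoted from \cite[Theorem 4.2.11]{DeK24}. I will deduce it from Theorem \ref{T:NT param} by passing through the quotient map $Q\colon \N\T_X\to\N\O(\K,X)=\N\T_X/\fJ_\K$.

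The first step is the standard correspondence between gauge-invariant ideals of the quotient and those of $\N\T_X$. Gauge-invariant ideals of $\N\O(\K,X)$ correspond bijectively and order-preservingly to gauge-invariant ideals $\fJ$ of $\N\T_X$ with $\fJ_\K\subseteq\fJ$, via $\fJ\mapsto Q(\fJ)$ and $\mathfrak{I}\mapsto Q^{-1}(\mathfrak{I})$. This works because $Q$ intertwines the gauge actions, $\fJ_\K$ is gauge-invariant, and preimages and images of gauge-invariant ideals under an equivariant surjection are gauge-invariant.

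By Theorem \ref{T:NT param}, it then suffices to show that, under the NT-$2^d$-tuple parametrisation $\L\mapsto\fJ^\L$, we have $\fJ_\K\subseteq\fJ^\L$ if and only if $\K\subseteq\L$. This uses the explicit form of the bijection in \cite{DeK24}. In one direction, $\fJ^\L=\fJ_\L$, the ideal generated by the $\ol\pi_X(\L_F)\ol q_{X,F}$. In the other direction, a gauge-invariant ideal $\fJ$ is sent to $\L^{(Q_\fJ\circ\ol\pi_X,\,Q_\fJ\circ\ol t_X)}$.

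If $\K\subseteq\L$, then $\ol\pi_X(\K_F)\ol q_{X,F}\subseteq\fJ_\L$ immediately, so $\fJ_\K\subseteq\fJ_\L$.

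For the converse, suppose $\fJ_\K\subseteq\fJ_\L$ and write $(\pi,t)=(Q_{\fJ_\L}\circ\ol\pi_X,Q_{\fJ_\L}\circ\ol t_X)$. Then $\pi(a)q_F=0$ for every $a\in\K_F$.
\begin{itemize}
\item For $F=\mt$ this says $a\in\ker\pi=\L_\mt$.
\item For $F\neq\mt$, since $\K$ is relative, Proposition \ref{P:prod cai} gives $\pi(a)\in B^{(\pi,t)}_{(\un 0,\un 1_F]}$. Hence $a\in\L_F^{(\pi,t)}$.
\end{itemize}
The main obstacle is to identify $\L^{(\pi,t)}$ with $\L$. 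This is the content of the injectivity of the parametrisation. To establish it, I would pass to $[X]_{\L_\mt}$ and apply Theorem \ref{T:d GIUT M} to $[\L]_{\L_\mt}$, using condition (iv) of Definition \ref{D:NT tuple} and Theorem \ref{T:m fam v2}.

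Finally, composing the bijection of Theorem \ref{T:NT param}, restricted to $\K$-relative NO-$2^d$-tuples, with the quotient correspondence gives the desired order-preserving bijection.
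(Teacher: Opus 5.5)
Your route matches how the result is obtained in the source; the paper itself only remarks that Theorem \ref{T:NT param} adapts ``with minor alterations''. You identify gauge-invariant ideals of $\N\O(\K,X)$ with gauge-invariant ideals of $\N\T_X$ containing $\fJ_\K$, and then show that under the NT-parametrisation this containment is equivalent to $\K\subseteq\L$. Your converse direction is correct. The ``main obstacle'' is not really one: that $\L^{(Q_\fJ\circ\ol\pi_X,\,Q_\fJ\circ\ol t_X)}=\L$ for the ideal $\fJ=\fJ^\L$ attached to $\L$ is exactly the statement that $\fJ\mapsto\L^{(Q_\fJ\circ\ol\pi_X,\,Q_\fJ\circ\ol t_X)}$ inverts the parametrisation. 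That is part of what you are already citing, so no fresh appeal to Theorem \ref{T:d GIUT M} is needed.

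What does need correcting is your description of the forward map. Theorem \ref{T:NO param} is stated for strong compactly aligned $X$, and there NT-$2^d$-tuples need not be relative. For example, $\L_F=A$ for all $F\subseteq[d]$ is an NT-$2^d$-tuple (the one attached to $\N\T_X$ itself), but it is relative only if $\phi_{\un{i}}(A)\subseteq\K(X_{\un{i}})$ for every $i$. For non-relative $\L$ the ideal $\fJ_\L$ of Definition \ref{D:LCNPideal} is not defined, since the products $\ol\pi_X(a)\ol q_{X,F}$ are not guaranteed to lie in $\N\T_X$. So the identification $\fJ^\L=\fJ_\L$, and with it your ``immediate'' proof of $\K\subseteq\L\Rightarrow\fJ_\K\subseteq\fJ^\L$, is unjustified as written.

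Repair it with the inverse map, exactly as in your converse. Put $\fJ=\fJ^\L$ and $(\pi,t)=(Q_\fJ\circ\ol\pi_X,Q_\fJ\circ\ol t_X)$, so that $\L=\L^{(\pi,t)}$. If $a\in\K_\mt\subseteq\ker\pi$, then $\ol\pi_X(a)\in\fJ$. If $\mt\neq F$ and $a\in\K_F\subseteq\L^{(\pi,t)}_F$, then $\pi(a)\in B^{(\pi,t)}_{(\un{0},\un{1}_F]}$, so $\pi(a)q_F=0$ by Proposition \ref{P:DK3.3}. Since $\K$ is relative, Proposition \ref{P:prod cai} (applied to both $(\ol\pi_X,\ol t_X)$ and $(\pi,t)$) gives $\pi(a)q_F=Q_\fJ(\ol\pi_X(a)\ol q_{X,F})$, whence $\ol\pi_X(a)\ol q_{X,F}\in\fJ$. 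This is the reverse-inclusion step in the proof of Proposition \ref{P:maxNT}, with relativity of $\K$ standing in for properness. With this change the argument is complete.
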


The bijection of Theorem \ref{T:NO param} is bolstered to a lattice isomorphism by equipping the set of $\K$-relative NO-$2^d$-tuples with the lattice structure mentioned previously.
Theorem \ref{T:NO param} completely describes the gauge-invariant ideal structure of every equivariant quotient of $\N\T_X$, since every such quotient is canonically $*$-isomorphic to a relative Cuntz-Nica-Pimsner algebra (not necessarily of $X$, but certainly of a quotient of $X$) \cite[Proposition 4.2.1]{DeK24}.

We close this subsection by clarifying the relationship between maximal $2^d$-tuples and NT-$2^d$-tuples in the case where $X$ is proper.

\begin{proposition}\label{P:maxNT}
Let $X$ be a proper product system over $\bZ_+^d$ with coefficients in a C*-algebra $A$ and suppose that $\L$ is a $2^d$-tuple of $X$.
Then the following are equivalent:
\begin{enumerate}
\item $\L$ is a maximal $2^d$-tuple of $X$;
\item $\L=\L^{(\pi,t)}$ for some Nica-covariant representation $(\pi,t)$ of $X$ that admits a gauge action;
\item $\L$ is an NT-$2^d$-tuple of $X$.
\end{enumerate}
\end{proposition}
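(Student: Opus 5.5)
The plan is to run everything through the parametrisation of Theorem \ref{T:NT param}, using two facts from its proof in \cite{DeK24}. First, the bijection sends a gauge-invariant ideal $\fJ\subseteq\N\T_X$ to the NT-$2^d$-tuple $\L^{(Q_\fJ\circ\ol{\pi}_X,Q_\fJ\circ\ol{t}_X)}$. Second, its inverse sends an NT-$2^d$-tuple $\L$ to $\fJ_\L$. Since $X$ is proper, every $2^d$-tuple is relative, because $\phi_\un{i}(A)\subseteq\K(X_\un{i})$. So $\fJ_\L$ and the notion of maximality make sense for every $\L$ below. Two further observations are used throughout.

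\emph{Observation (a).} If $(\pi,t)$ is Nica-covariant and admits a gauge action, then $\ker(\pi\times t)$ is gauge-invariant. Moreover $\L^{(\pi,t)}=\L^{(Q\circ\ol{\pi}_X,Q\circ\ol{t}_X)}$ for the quotient map $Q$ by $\ker(\pi\times t)$. This holds because $\L^{(\pi,t)}$ is defined by kernels and preimages of cores, which are transported by the canonical $*$-isomorphism $\N\T_X/\ker(\pi\times t)\cong\ca(\pi,t)$.

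\emph{Observation (b).} If $\L'$ is any $2^d$-tuple and $(\pi,t)$ is the quotient representation by $\fJ_{\L'}$, then $\L'\subseteq\L^{(\pi,t)}$. To see this, take $a\in\L'_\mt$; then $\pi(a)=\pi(a)q_\mt=0$. For $\mt\neq F$ and $a\in\L'_F$, Proposition \ref{P:prod cai} gives $0=\pi(a)q_F=\pi(a)+\sum\{(-1)^{|\un{n}|}\psi_\un{n}(\phi_\un{n}(a))\mid \un{0}\neq\un{n}\leq\un{1}_F\}$. Hence $\pi(a)\in B^{(\pi,t)}_{(\un{0},\un{1}_F]}$.

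(ii)$\Rightarrow$(iii). Let $\L=\L^{(\pi,t)}$ with $(\pi,t)$ admitting a gauge action. By (a), $\L$ equals the image of the gauge-invariant ideal $\ker(\pi\times t)$ under the bijection of Theorem \ref{T:NT param}. It is therefore an NT-$2^d$-tuple.

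(iii)$\Rightarrow$(i). Let $\L$ be an NT-$2^d$-tuple, and put $\fJ=\fJ_\L$ and $(\pi,t)=(Q_\fJ\circ\ol{\pi}_X,Q_\fJ\circ\ol{t}_X)$. By the bijection, $\L=\L^{(\pi,t)}$. Suppose $\L\subseteq\L'$ and $\fJ_{\L'}=\fJ_\L$. Then (b) gives $\L'\subseteq\L^{(\pi,t)}=\L$, so $\L'=\L$ and $\L$ is maximal.

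(i)$\Rightarrow$(ii). Let $\M$ be maximal and take $(\pi,t)$ to be the quotient representation by $\fJ_\M$, which admits a gauge action. By (b), $\M\subseteq\L^{(\pi,t)}$. By (a) and the bijection, $\L^{(\pi,t)}$ is an NT-$2^d$-tuple mapped to $\fJ_\M$, so its inverse image gives $\fJ_{\L^{(\pi,t)}}=\fJ_\M$. Maximality then forces $\M=\L^{(\pi,t)}$.

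The main obstacle is justifying precisely the description of the bijection in Theorem \ref{T:NT param}, in particular that $\fJ_{\L^{(Q_\fJ\circ\ol{\pi}_X,Q_\fJ\circ\ol{t}_X)}}=\fJ$ for every gauge-invariant $\fJ$. I would extract this from \cite[Section 4.2]{DeK24}. If needed, I would re-derive it via the quotient system $[X]_{\L_\mt}$ and Theorem \ref{T:d GIUT M} applied to the injective representation induced there.
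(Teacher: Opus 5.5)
Your argument is correct, but it takes a different route from the paper. You derive all three equivalences from the explicit maps of the bijection in Theorem \ref{T:NT param}. The paper outsources only (ii)$\Leftrightarrow$(iii), to \cite[Proposition 4.1.12]{DeK24}, and proves (i)$\Leftrightarrow$(ii) directly.

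For (i)$\Rightarrow$(ii), the paper begins exactly as in your Observation (b). It then obtains the identity you flag as the main obstacle, $\fJ_{\L^{(Q_\fJ\circ\ol{\pi}_X,Q_\fJ\circ\ol{t}_X)}}=\fJ$ for $\fJ=\fJ_\M$, without any parametrisation. If $a$ lies in the $F$-entry of that tuple, then $Q_\fJ(\ol{\pi}_X(a))$ lies in the corresponding core. Propositions \ref{P:DK3.3} and \ref{P:prod cai} then give $Q_\fJ(\ol{\pi}_X(a)\ol{q}_{X,F})=0$; properness is what makes $\ol{\pi}_X(a)\ol{q}_{X,F}$ an element of $\N\T_X$.

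For (ii)$\Rightarrow$(i), the paper pushes $\fJ_{\L'}=\fJ_\L$ through $\pi\times t$. Proposition \ref{P:DK3.3} gives $\pi(\L'_F)q_F=0$, hence $\L'\subseteq\L^{(\pi,t)}$ by (\ref{Eq:proptrick}). This works for any Nica-covariant $(\pi,t)$, whether or not it admits a gauge action.

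So the paper shows by elementary means that maximality is equivalent to being of the form $\L^{(\pi,t)}$. Only the step (ii)$\Leftrightarrow$(iii) depends on Gauge-Invariant Uniqueness Theorems.

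Your route is more uniform and shorter once Theorem \ref{T:NT param} is granted with its maps. The cost is that it imports the full parametrisation, and hence Theorem \ref{T:d GIUT M}, even for (i)$\Leftrightarrow$(ii). It also relies on the inverse map of \cite[Theorem 4.2.3]{DeK24} being $\L\mapsto\fJ_\L$ in the proper case. Outside the proper setting NT-$2^d$-tuples need not be relative, so you must verify this rather than read it off.
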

\begin{proof}
The equivalence of (ii) and (iii) follows by \cite[Proposition 4.1.12]{DeK24}, so it suffices to show the equivalence of (i) and (ii).

Assume that $\L$ is a maximal $2^d$-tuple of $X$ and set $\fJ:=\fJ_\L$.
Let $Q_\fJ\colon\N\T_X\to\N\T_X/\fJ$ denote the quotient map.
Recall that $(Q_\fJ\circ\ol{\pi}_X,Q_\fJ\circ\ol{t}_X)$ is a Nica-covariant representation of $X$ that admits a gauge action.
It is routine to check that the induced $*$-representation of $\K(X_\un{n})$ is $Q_\fJ\circ\ol{\psi}_{X,\un{n}}$ for each $\un{n}\in\bZ_+^d$.
It suffices to show that $\L=\L^{(Q_\fJ\circ\ol{\pi}_X,Q_\fJ\circ\ol{t}_X)}$.
We start by showing that $\L\subseteq\L^{(Q_\fJ\circ\ol{\pi}_X,Q_\fJ\circ\ol{t}_X)}$.
To this end, it is instructive to recall the definition of the ideal $\fJ$:
\[
\fJ\equiv\sca{\ol{\pi}_X(\L_F)\ol{q}_{X,F}\mid F\subseteq[d]}.
\]
Thus we have that
\[
\ol{\pi}_X(\L_\mt)=\ol{\pi}_X(\L_\mt)\ol{q}_{X,\mt}\subseteq\fJ,
\]
from which it follows that $\L_\mt\subseteq\L^{(Q_\fJ\circ\ol{\pi}_X,Q_\fJ\circ\ol{t}_X)}_\mt\equiv\ker Q_\fJ\circ\ol{\pi}_X$.
Now fix $\mt\neq F\subseteq[d]$ and $a\in\L_F$.
An application of Proposition \ref{P:prod cai} yields that
\[
\fJ\ni\ol{\pi}_X(a)\ol{q}_{X,F}=\ol{\pi}_X(a)+\sum\{(-1)^{|\un{n}|}\ol{\psi}_{X,\un{n}}(\phi_\un{n}(a))\mid\un{0}\neq\un{n}\leq\un{1}_F\}
\]
and hence
\[
Q_\fJ(\ol{\pi}_X(a))+\sum\{(-1)^{|\un{n}|}Q_\fJ(\ol{\psi}_{X,\un{n}}(\phi_\un{n}(a)))\mid\un{0}\neq\un{n}\leq\un{1}_F\}=Q_\fJ(\ol{\pi}_X(a)\ol{q}_{X,F})=0.
\]
It follows that $a\in\L^{(Q_\fJ\circ\ol{\pi}_X,Q_\fJ\circ\ol{t}_X)}_F\equiv(Q_\fJ\circ\ol{\pi}_X)^{-1}(B_{(\un{0},\un{1}_F]}^{(Q_\fJ\circ\ol{\pi}_X,Q_\fJ\circ\ol{t}_X)})$ and so $\L\subseteq\L^{(Q_\fJ\circ\ol{\pi}_X,Q_\fJ\circ\ol{t}_X)}$.
For the reverse inclusion, note that properness of $X$ ensures that $\L^{(Q_\fJ\circ\ol{\pi}_X,Q_\fJ\circ\ol{t}_X)}$ is a relative $2^d$-tuple. 
Thus maximality of $\L$ implies that it is sufficient to show that
\[
\fJ=\fJ_{\L^{(Q_\fJ\circ\ol{\pi}_X,Q_\fJ\circ\ol{t}_X)}}\equiv\sca{\ol{\pi}_X(\L^{(Q_\fJ\circ\ol{\pi}_X,Q_\fJ\circ\ol{t}_X)}_F)\ol{q}_{X,F}\mid F\subseteq[d]}.
\]
To this end, observe that the forward inclusion is immediate since $\L\subseteq\L^{(Q_\fJ\circ\ol{\pi}_X,Q_\fJ\circ\ol{t}_X)}$.
For the reverse inclusion, fix $F\subseteq[d]$ and $a\in\L_F^{(Q_\fJ\circ\ol{\pi}_X,Q_\fJ\circ\ol{t}_X)}$.
It suffices to show that $\ol{\pi}_X(a)\ol{q}_{X,F}\in\fJ$.
This is immediate when $F=\mt$, so we may assume that $F\neq\mt$ without loss of generality.
Since $a\in\L_F^{(Q_\fJ\circ\ol{\pi}_X,Q_\fJ\circ\ol{t}_X)}$, by definition there exists $k_\un{n}\in\K(X_\un{n})$ for each $\un{0}\neq\un{n}\leq\un{1}_F$ such that
\[
Q_\fJ(\ol{\pi}_X(a))=\sum\{Q_\fJ(\ol{\psi}_{X,\un{n}}(k_\un{n}))\mid\un{0}\neq\un{n}\leq\un{1}_F\}.
\]
Hence we obtain that
\[
Q_\fJ(\ol{\pi}_X(a)\ol{q}_{X,F})=Q_\fJ(\ol{\pi}_X(a))+\sum\{(-1)^{|\un{n}|}Q_\fJ(\ol{\psi}_{X,\un{n}}(\phi_\un{n}(a)))\mid\un{0}\neq\un{n}\leq\un{1}_F\}=0,
\]
where the first equality follows by Proposition \ref{P:prod cai} and the second follows by a combination of Propositions \ref{P:prod cai} and \ref{P:DK3.3}, replacing ``$(\pi,t)$" by ``$(Q_\fJ\circ\ol{\pi}_X,Q_\fJ\circ\ol{t}_X)$" in the statements of both.
Thus $\ol{\pi}_X(a)\ol{q}_{X,F}\in\fJ$, as required.

For the converse, assume that $\L=\L^{(\pi,t)}$ for some Nica-covariant representation $(\pi,t)$ of $X$ that admits a gauge action.
Let $\L'$ be a $2^d$-tuple of $X$ (which is automatically relative by properness of $X$) such that $\L\subseteq\L'$ and $\fJ_\L=\fJ_{\L'}$.
We must show that $\L'\subseteq\L$.
To this end, fix $F\subseteq[d]$ and $a\in\L_F'$.
By definition we have that $\ol{\pi}_X(a)\ol{q}_{X,F}\in\fJ_{\L'}=\fJ_\L$.
Observe that
\[
(\pi\times t)(\fJ_\L)=\sca{\pi(\L_D)q_D\mid D\subseteq[d]}
\]
by canonicity of $\pi\times t\colon\N\T_X\to\ca(\pi,t)$.
In turn, we have that
\[
\pi(a)q_F=(\pi\times t)(\ol{\pi}_X(a)\ol{q}_{X,F})\in\sca{\pi(\L_D)q_D\mid D\subseteq[d]}
\]
by Proposition \ref{P:prod cai}.
However, since $\L=\L^{(\pi,t)}$ by assumption, we obtain that
\[
\sca{\pi(\L_D)q_D\mid D\subseteq[d]}=\sca{\pi(\L_D^{(\pi,t)})q_D\mid D\subseteq[d]}=\{0\},
\]
where the final equality follows by Proposition \ref{P:DK3.3}.
Thus $\pi(a)q_F=0$ and so $a\in\L_F^{(\pi,t)}=\L_F$ by Proposition \ref{P:prod cai}.
Hence $\L'\subseteq\L$ and we conclude that $\L$ is maximal, finishing the proof.
\end{proof}

\subsection{T-families}\label{Ss:T fam}

Next we present the key tools used to achieve the parametrisation result of \cite{BB24}.
Much of the work therein focuses on an arbitrary proper product system $X$ over $\bZ_+^d$ with coefficients in a C*-algebra $A$, so we will restrict our attention to this setting throughout the subsection.
The approach adopted in \cite{BB24} makes use of an extended product system construction \cite[Section 4.2]{BB24} and thus differs from \cite{DeK24} quite substantially.
Nevertheless, there are some key commonalities, including the use of relative Cuntz-Nica-Pimsner algebras \cite[Section 4.3]{BB24} and a Gauge-Invariant Uniqueness Theorem \cite[Corollary 4.14]{BB24}.

Recalling that $X$ is automatically strong compactly aligned by Proposition \ref{P:propimpsca}, we begin by addressing how some of the machinery covered up to this point simplifies in the proper case.
Firstly, every $2^d$-tuple of $X$ is automatically relative.
Next, given a Nica-covariant representation $(\pi,t)$ of $X$, we have that 
\[
\pi(a)q_F=\pi(a)+\sum\{(-1)^{|\un{n}|}\psi_\un{n}(\phi_\un{n}(a))\mid\un{0}\neq\un{n}\leq\un{1}_F\}\foral a\in A \; \text{and} \; \mt\neq F\subseteq[d]
\]
by Proposition \ref{P:prod cai}.
In turn, fixing $a\in A$ and $F\subseteq[d]$, we have that
\begin{equation}\label{Eq:proptrick}
\pi(a)q_F=0\iff a\in\L_F^{(\pi,t)},
\end{equation}
where the reverse implication follows by Proposition \ref{P:DK3.3}.
Lastly, fixing an ideal $I\subseteq A$, we deduce that
\[
\J_F=(\bigcap_{i\in F}\ker\phi_\un{i})^\perp\qand J_F(I,X)=\{a\in A\mid aX_F^{-1}(I)\subseteq I\}\foral\mt\neq F\subseteq[d],
\]
where the simplification of $J_F(I,X)$ follows by Lemma \ref{L:Kat07}.

\begin{definition}\label{D:T+O}\cite[Definition 4.2]{BB24}
Let $X$ be a proper product system over $\bZ_+^d$ with coefficients in a C*-algebra $A$.
A $2^d$-tuple $\L$ of $X$ is a \emph{T-family (of $X$)} if it consists of ideals and satisfies
\begin{equation}\label{Eq:Tfam}
\L_F=X_\un{i}^{-1}(\L_F)\cap\L_{F\cup\{i\}}\foral F\subsetneq[d] \; \text{and} \; i\in[d]\setminus F.
\end{equation}
A T-family $\L$ of $X$ is said to be an \emph{O-family (of $X$)} if $\I\subseteq\L$.
\end{definition}

Related to T-families are the \emph{invariant families} \cite[Definition 4.1]{BB24} (not to be confused with item (i) of Definition \ref{D:inv po}).
The set of invariant families of $X$ is in order-preserving bijection with the set of T-families of $X$ \cite[Proposition 4.4]{BB24} and thus we focus our attention on the latter.
T-families admit the following Gauge-Invariant Uniqueness Theorem.

\begin{theorem}\label{T:tfamGIUT}\cite[Corollary 4.14]{BB24}
Let $X$ be a proper product system over $\bZ_+^d$ with coefficients in a C*-algebra $A$.
Let $\L$ be a T-family of $X$ and $(\pi,t)$ be an $\L$-relative CNP-representation of $X$.
Then $\N\O(\L,X)\cong\ca(\pi,t)$ via a (unique) canonical $*$-isomorphism if and only if $\pi(a)q_F=0$ implies that $a\in\L_F$ (for all $a\in A$ and $F\subseteq[d]$) and $(\pi,t)$ admits a gauge action. 
\end{theorem}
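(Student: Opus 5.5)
The plan is to reduce both directions to the machinery of \cite{DeK24} recalled above, namely Theorem \ref{T:d GIUT M} and Proposition \ref{P:maxNT}. The key intermediate claim is that every T-family $\L$ of $X$ is an NT-$2^d$-tuple, and hence, by Proposition \ref{P:maxNT}, a maximal $2^d$-tuple.

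\emph{Step 1: T-families are NT-$2^d$-tuples.} I would verify conditions (i)--(iii) of Definition \ref{D:NT tuple} together with the simplified condition of Proposition \ref{P:NTcom}.
\begin{itemize}
\item \emph{Partial ordering.} This follows from $\L_F\subseteq\L_{F\cup\{i\}}$, iterated.
\item \emph{Invariance.} For $i\notin F$ we have $\L_F\subseteq X_{\un{i}}^{-1}(\L_F)$. Iterating along a decomposition of $\un{n}\perp F$ into generators, and using $X_{\un{n}+\un{m}}\cong X_\un{n}\otimes X_\un{m}$ with Lemma \ref{L:lance}, gives $\sca{X_\un{n},\L_FX_\un{n}}\subseteq\L_F$.
\item \emph{The containment $\L_F\subseteq J_F(\L_\mt,X)$.} Iterating the T-relation from $\mt$ up to $F$ should give
\[
\L_\mt=X_F^{-1}(\L_\mt)\cap\L_F.
\]
Then for $a\in\L_F$ and $b\in X_F^{-1}(\L_\mt)$, we get $ab\in\L_F\cap X_F^{-1}(\L_\mt)=\L_\mt$. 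To justify the iteration I would use $X_{\un{n}}^{-1}(X_{\un{m}}^{-1}(I))=X_{\un{n}+\un{m}}^{-1}(I)$.
\item \emph{The condition of Proposition \ref{P:NTcom}.} This is the main obstacle, because of the ideal $\L_{\lim,F}$. Take $a$ in the triple intersection. By the T-relation it suffices to show $a\in\L_{F\cup\{i\}}$ and $a\in X_{\un{i}}^{-1}(\L_F)$. The first is given by $\L_{\inv,F}$ with $\un{n}=\un{0}$. For the second, I would first pass to the quotient $[X]_{\L_\mt}$, which is legitimate since $\L_\mt$ is positively invariant. There, Proposition \ref{P:lim perp} supplies $\un{n}\perp F$ and $k\in\K(X_\un{n}\L_F)$ with $\phi_\un{n}(a)$ close to $-k$. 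Compressing by $\sca{X_\un{n},\cdot\,X_\un{n}}$ and using \eqref{Eq: comp} should show that $a$ lies in $\L_F$ modulo a set controlled by $X_\un{n}^{-1}$ of the ideals $J_F(\L_\mt,X)$ and $\cap_{D\supsetneq F}\L_D$. I would then try to absorb this error by induction on $|\un{n}|$, repeatedly applying the T-relation $\L_F=X_{\un{j}}^{-1}(\L_F)\cap\L_{F\cup\{j\}}$ for $j\in\supp\un{n}$.
\end{itemize}

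\emph{Step 2: ``only if''.} Suppose $\N\O(\L,X)\cong\ca(\pi,t)$ canonically. Then the gauge action transfers from $(\pi_X^\L,t_X^\L)$. By Step 1 and Proposition \ref{P:maxNT}, $\L$ is maximal. The first half of the proof of (i)$\Rightarrow$(ii) in Proposition \ref{P:maxNT} shows that $\L=\L^{(\pi_X^\L,t_X^\L)}$, and this equality is transported through the isomorphism to give $\L=\L^{(\pi,t)}$. By \eqref{Eq:proptrick}, $\pi(a)q_F=0$ then forces $a\in\L_F$.

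\emph{Step 3: ``if''.} The hypothesis, the $\L$-relative CNP relations and \eqref{Eq:proptrick} give $\L=\L^{(\pi,t)}$. Since $\L_\mt=\ker\pi$, I would pass to the quotient product system $[X]_{\L_\mt}$, on which $(\pi,t)$ descends to an injective Nica-covariant representation that still admits a gauge action. There, $[\L]_{\L_\mt}$ is contained in $\I([X]_{\L_\mt})$ and is maximal, by Proposition \ref{P:maxNT} applied to the quotient. Theorem \ref{T:d GIUT M} then identifies $\ca(\pi,t)$ with $\N\O([\L]_{\L_\mt},[X]_{\L_\mt})$. Finally, I would identify the latter canonically with $\N\O(\L,X)$ via the quotient machinery of \cite[Section 4]{DeK24}, which rests on $\L_\mt\subseteq\L_F$ for all $F$.
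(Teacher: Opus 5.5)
Your route differs from the paper's. The paper gives no proof of this statement: it imports it from \cite[Corollary 4.14]{BB24}, where it comes out of Bilich's product system extension machinery, and uses it as a black box in Proposition \ref{P:Tfamga}. Deriving it instead from Theorem \ref{T:d GIUT M} is sound in outline.

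Step 3 works, and in fact never uses Step 1. From $\L=\L^{(\pi,t)}$, the representation $(\dot\pi,\dot t)$ of $[X]_{\L_\mt}$ induced by $(\pi,t)$ is injective and gauge-equivariant with $\L^{(\dot\pi,\dot t)}=[\L]_{\L_\mt}$. Maximality and $[\L]_{\L_\mt}\subseteq\I([X]_{\L_\mt})$ then follow from Propositions \ref{P:maxNT} and \ref{P:DK3.4}. Step 2 works once $\L$ is known to be maximal.

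So, modulo the cited machinery of \cite{DeK24}, everything rests on the last bullet of Step 1. That bullet is what shows the universal $\L$-relative CNP-representation creates no relations beyond $\L$, and there your proposal contains no argument. Membership $a\in\L_{\lim,F}$ only gives $\nor{[\phi_{\un n}(a)]_{\L_F}}<\varepsilon$, whereas the T-relation is an exact statement about ideals. Compressing by $\sca{X_{\un n},\cdot\, X_{\un n}}$ produces elements that are merely $\varepsilon$-close to $\L_F$. An induction using the T-relation can only transport exact memberships, so there is nothing for it to ``absorb''. Your reduction to $a\in X_{\un i}^{-1}(\L_F)$ does not help either: that is no easier than proving $a\in\L_F$ directly.

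The missing idea, which is Proposition \ref{P:restinj}, is to encode the exact induction as injectivity of a $*$-homomorphism, and then use that injective $*$-homomorphisms are isometric.
\begin{itemize}
\item For $\un n\perp F$ one has $[a]_{\L_F}\in\ker[\phi_{\un n}]_{\L_F}$ if and only if $a\in X_{\un n}^{-1}(\L_F)$.
\item If $a\in\L_{\inv,F}\cap X_{\un n}^{-1}(\L_F)$ with $\un n=\un m+\un i$ and $i\notin F$, then $\sca{X_{\un m},aX_{\un m}}\subseteq X_{\un i}^{-1}(\L_F)\cap\L_{F\cup\{i\}}=\L_F$, i.e.\ $a\in X_{\un m}^{-1}(\L_F)$.
\item Inducting on $|\un n|$, the restriction of $[\phi_{\un n}]_{\L_F}$ to $[\L_{\inv,F}]_{\L_F}$ is injective, hence isometric.
\item By Lemma \ref{L:Kat07} and properness, $\nor{[a]_{\L_F}}=\nor{[\phi_{\un n}(a)]_{\L_F}}=\nor{\phi_{\un n}(a)+\K(X_{\un n}\L_F)}<\varepsilon$, so $a\in\L_F$.
\end{itemize}
This uses $a\in\L_{\inv,F}$ at every level $\un m\perp F$, not only at $\un m=\un 0$. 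Neither the ideal $\bigcap_{\un n\perp F}X_{\un n}^{-1}(J_F(\L_\mt,X))$ nor the passage to $[X]_{\L_\mt}$ is needed.

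This is Proposition \ref{P:TimpliesNT}, which the paper proves without any Gauge-Invariant Uniqueness Theorem. With it in place, your Steps 2 and 3 give a proof of the statement from \cite{DeK24} alone, reversing the logical direction of the paper's indirect route.
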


It should be noted that the set of T-families of $X$ and the set of $2^d$-tuples $\L$ of $X$ that satisfy $\L\subseteq\I$ are not comparable.
In other words, there exist T-families $\L$ that do not satisfy $\L\subseteq\I$, as well as $2^d$-tuples $\L$ which satisfy $\L\subseteq\I$ but not (\ref{Eq:Tfam}).
Accordingly, Theorem \ref{T:tfamGIUT} and \cite[Theorem 3.4.9]{DeK24} should not be conflated, even upon restriction of the latter to the proper case.
We will instantiate this in Subsection \ref{Ss:c*-dynsys} (see Remark \ref{R:GIUTdiff}), as we will require a product system construction arising from the theory of C*-dynamical systems.

It will be useful to rephrase Theorem \ref{T:tfamGIUT} via the following lemma.

\begin{lemma}\label{L:GIUTequiv}
Let $X$ be a proper product system over $\bZ_+^d$ with coefficients in a C*-algebra $A$.
Let $\L$ be a T-family of $X$ and $(\pi,t)$ be an $\L$-relative CNP-representation of $X$.
Then $\pi(a)q_F=0$ implies that $a\in\L_F$ (for all $a\in A$ and $F\subseteq[d]$) if and only if $\L=\L^{(\pi,t)}$.
\end{lemma}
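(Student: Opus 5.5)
The whole argument rests on the equivalence (\ref{Eq:proptrick}). In the proper setting it says that, for every Nica-covariant representation $(\pi,t)$, every $a\in A$ and every $F\subseteq[d]$,
\[
\pi(a)q_F=0 \iff a\in\L_F^{(\pi,t)}.
\]
For $F=\mt$ this is just $\pi(a)=0\iff a\in\ker\pi$, since $q_\mt=I_H$. For $F\neq\mt$, the forward direction uses the expansion of $\pi(a)q_F$ from Proposition \ref{P:prod cai}: if $\pi(a)q_F=0$, that expansion writes $\pi(a)$ as an element of $B^{(\pi,t)}_{(\un{0},\un{1}_F]}$. The reverse direction is Proposition \ref{P:DK3.3} applied with $\un{m}=\un{1}_F$. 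So the condition ``$\pi(a)q_F=0$ implies $a\in\L_F$'' is exactly the inclusion $\L^{(\pi,t)}\subseteq\L$, and the lemma reduces to showing that the opposite inclusion $\L\subseteq\L^{(\pi,t)}$ always holds for an $\L$-relative CNP-representation.

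Both directions of the lemma then follow. For the converse direction, if $\L=\L^{(\pi,t)}$ and $\pi(a)q_F=0$, then (\ref{Eq:proptrick}) gives $a\in\L_F^{(\pi,t)}=\L_F$.

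For the forward direction, assume that $\pi(a)q_F=0$ implies $a\in\L_F$, for all $a\in A$ and $F\subseteq[d]$. By (\ref{Eq:proptrick}) this assumption says precisely that $\L^{(\pi,t)}\subseteq\L$. For the reverse inclusion, use that $(\pi,t)$ is an $\L$-relative CNP-representation: $\pi(\L_F)q_F=\{0\}$ for all $F\subseteq[d]$. So every $a\in\L_F$ satisfies $\pi(a)q_F=0$, and (\ref{Eq:proptrick}) places $a$ in $\L_F^{(\pi,t)}$. Hence $\L\subseteq\L^{(\pi,t)}$, and therefore $\L=\L^{(\pi,t)}$.

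The T-family hypothesis is not used in this argument; it matters only for the surrounding Gauge-Invariant Uniqueness Theorem (Theorem \ref{T:tfamGIUT}). The only point needing care is the case $F=\mt$ in (\ref{Eq:proptrick}), which was checked directly above, and the fact that properness makes every $a\in A$ eligible for Propositions \ref{P:prod cai} and \ref{P:DK3.3}, which holds because $\phi_{\un{i}}(A)\subseteq\K(X_{\un{i}})$ for all $i\in[d]$.
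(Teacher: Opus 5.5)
Your proposal is correct and follows the paper's proof, which is the one-liner ``immediate by (\ref{Eq:proptrick}) and the fact that $(\pi,t)$ is an $\L$-relative CNP-representation''. You have simply written out the two inclusions $\L^{(\pi,t)}\subseteq\L$ and $\L\subseteq\L^{(\pi,t)}$ explicitly, and your observation that the T-family hypothesis plays no role in the argument is accurate.
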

\begin{proof}
Immediate by (\ref{Eq:proptrick}) and the fact that $(\pi,t)$ is an $\L$-relative CNP-representation.
\end{proof}

The main result of \cite{BB24} is as follows.

\begin{theorem}\label{T:Bparam}\cite[Theorem 4.15]{BB24}
Let $X$ be a proper product system over $\bZ_+^d$ with coefficients in a C*-algebra $A$.
Then the following hold:
\begin{enumerate}
\item there exists an order-preserving bijection between the set of T-families of $X$ and the set of gauge-invariant ideals of $\N\T_X$, and
\item there exists an order-preserving bijection between the set of O-families of $X$ and the set of gauge-invariant ideals of $\N\O_X$.
\end{enumerate}
\end{theorem}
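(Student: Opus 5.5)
The plan is to deduce both parts from the parametrisation of \cite{DeK24}: Theorem \ref{T:NT param} for $\N\T_X$, and Theorem \ref{T:NO param} with $\K=\I$ for $\N\O_X$. For this it suffices to show that, when $X$ is proper, the T-families of $X$ are exactly the NT-$2^d$-tuples of $X$. Once that holds, the O-families are exactly the NT-$2^d$-tuples $\L$ with $\I\subseteq\L$, i.e., the NO-$2^d$-tuples. The bijections, and the fact that they preserve order, are then inherited directly from Theorems \ref{T:NT param} and \ref{T:NO param}. By Proposition \ref{P:maxNT}, the NT-$2^d$-tuples are exactly the families $\L^{(\pi,t)}$ with $(\pi,t)$ a Nica-covariant representation admitting a gauge action. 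So the work reduces to two claims.

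\emph{Claim 1: every T-family arises this way.} Let $\L$ be a T-family and consider $(\pi,t)=(\pi_X^\L,t_X^\L)$. This is an $\L$-relative CNP-representation and it admits a gauge action. Trivially $\N\O(\L,X)\cong\ca(\pi,t)$ canonically. Theorem \ref{T:tfamGIUT} then gives that $\pi(a)q_F=0$ implies $a\in\L_F$, and Lemma \ref{L:GIUTequiv} converts this into $\L=\L^{(\pi,t)}$. By Proposition \ref{P:maxNT}, $\L$ is an NT-$2^d$-tuple.

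\emph{Claim 2: for any Nica-covariant $(\pi,t)$, the family $\L^{(\pi,t)}$ is a T-family.} It consists of ideals, so we must check (\ref{Eq:Tfam}). By (\ref{Eq:proptrick}), $a\in\L^{(\pi,t)}_F$ if and only if $\pi(a)q_F=0$. Fix $F\subsetneq[d]$ and $i\notin F$; we use $q_{F\cup\{i\}}=q_F(I-p_{\un{i}})$.

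For the inclusion $\subseteq$, suppose $\pi(a)q_F=0$. Then $\pi(a)q_{F\cup\{i\}}=0$. Moreover, for $\xi,\eta\in X_{\un{i}}$ we have $\un{i}\perp F$, so Proposition \ref{P:pf reducing} gives
\[
\pi(\sca{\xi,a\eta})q_F=t_{\un{i}}(\xi)^*\pi(a)q_Ft_{\un{i}}(\eta)=0,
\]
and hence $a\in X_{\un{i}}^{-1}(\L_F)$.

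For the inclusion $\supseteq$, I expect the main obstacle. Suppose $a\in X_{\un{i}}^{-1}(\L_F)\cap\L_{F\cup\{i\}}$. From $\pi(a)q_{F\cup\{i\}}=0$ we get $\pi(a)q_F=\pi(a)q_Fp_{\un{i}}$, so it suffices that $\pi(a)q_F$ vanishes on $[\psi_{\un{i}}(\K(X_{\un{i}}))H]$, i.e., on the range of each $t_{\un{i}}(\xi)$. Proposition \ref{P:pf reducing} gives $q_Ft_{\un{i}}(\xi)=t_{\un{i}}(\xi)q_F$, and $q_F\in\pi(A)'$, so
\[
\|\pi(a)q_Ft_{\un{i}}(\xi)\|^2=\|q_Ft_{\un{i}}(\xi)^*\pi(a^*a)t_{\un{i}}(\xi)q_F\|=\|\pi(\sca{\xi,a^*a\xi})q_F\|.
\]
The last term vanishes because $a^*a\in X_{\un{i}}^{-1}(\L_F)$ (it is an ideal) and so $\sca{\xi,a^*a\xi}\in\L_F$. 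Hence $\pi(a)q_F=0$, i.e., $a\in\L_F$.

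Combining the claims: a T-family is an NT-$2^d$-tuple by Claim 1. Conversely, an NT-$2^d$-tuple equals some $\L^{(\pi,t)}$ by Proposition \ref{P:maxNT}, and is therefore a T-family by Claim 2. This establishes the identification and hence both parts of the theorem.
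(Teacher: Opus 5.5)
Your argument is correct, but it is not the route behind this statement. The paper does not prove the theorem itself: it imports it from \cite{BB24}, where it is obtained through Bilich's extended product systems and invariant families, using his own Gauge-Invariant Uniqueness Theorem.

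You instead obtain it as the special cases $\K=\{\{0\}\}_{F\subseteq[d]}$ and $\K=\I$ of the parametrisation in \cite{DeK24} (Theorems \ref{T:NT param} and \ref{T:NO param}), once NT-$2^d$-tuples are identified with T-families. This is exactly what the paper's Theorem \ref{T:NOparamprop} (via Corollary \ref{C:relOfam}) delivers. Your identification follows the paper's indirect route through Propositions \ref{P:maxNT} and \ref{P:Tfamga}.

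The one genuine departure is your Claim 2. Proposition \ref{P:repTfam} cites \cite[Proposition 3.1.14]{DeK24} for the forward inclusion. For the reverse inclusion it compresses $\pi(a)q_F$ on both sides by $\psi_{\un{i}}(\K(X_{\un{i}}))$, expands using Nica-covariance and approximate units (Proposition \ref{P:sca ai}), and adds two alternating-sum relations. You instead use three facts:
\begin{itemize}
\item the factorisation $q_{F\cup\{i\}}=q_F(I_H-p_{\un{i}})$;
\item the commutation $q_Ft_{\un{i}}(\xi)=t_{\un{i}}(\xi)q_F$;
\item the C*-identity applied to $a^*a$.
\end{itemize}
Together these show that $\pi(a)q_F$ vanishes on $[t_{\un{i}}(X_{\un{i}})H]\supseteq p_{\un{i}}H$. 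This is shorter, avoids the explicit Nica-covariance and approximate-unit computation, and proves both inclusions from scratch.

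Keep in mind that your Claim 1 rests on Theorem \ref{T:tfamGIUT}, which is \cite[Corollary 4.14]{BB24}. This is not circular, since in \cite{BB24} that result precedes and feeds into the parametrisation. It does mean your deduction still depends on Bilich's machinery. If you replace Claim 1 by the paper's direct Proposition \ref{P:TimpliesNT}, you get a proof of the statement that rests only on \cite{DeK24}. Your Claim 2 together with Proposition \ref{P:maxNT} already covers the other inclusion without \cite{BB24}.
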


Theorems \ref{T:NO param} and \ref{T:Bparam} both clarify the gauge-invariant ideal structure of $\N\T_X$ and $\N\O_X$: the former on the level of strong compactly aligned product systems, and the latter on the level of proper product systems over $\bZ_+^d$.
In both cases the parametrising objects are subfamilies of $2^d$-tuples of $X$ and both proofs make use of a Gauge-Invariant Uniqueness Theorem.
With these similarities in mind, it is now natural to ask if the parametrising objects of the two theorems are in fact the same on the level of proper product systems over $\bZ_+^d$.
Answering this question in the affirmative will be our primary focus going forward.

Provided that one is willing to take a detour via Nica-covariant representations and the Gauge-Invariant Uniqueness Theorems of \cite{BB24, DeK24}, arriving at the aforementioned answer is reasonably straightforward.
The following two results demonstrate how to achieve this.

\begin{proposition}\label{P:repTfam}
Let $X$ be a proper product system over $\bZ_+^d$ with coefficients in a C*-algebra $A$ and let $(\pi,t)$ be a Nica-covariant representation of $X$.
Then $\L^{(\pi,t)}$ is a T-family of $X$.
\end{proposition}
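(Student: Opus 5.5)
The plan is to use the characterisation (\ref{Eq:proptrick}): for all $a \in A$ and $F \subseteq [d]$, we have that $a \in \L^{(\pi,t)}_F$ if and only if $\pi(a)q_F = 0$. We already know that $\L^{(\pi,t)}$ consists of ideals, so it remains to verify (\ref{Eq:Tfam}). To this end, fix $F \subsetneq [d]$ and $i \in [d]\setminus F$.

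\emph{The inclusion $\subseteq$.} Recall that $\L^{(\pi,t)}$ is invariant and partially ordered \cite[Proposition 3.1.14]{DeK24}.
\begin{itemize}
\item Partial ordering gives $\L^{(\pi,t)}_F \subseteq \L^{(\pi,t)}_{F\cup\{i\}}$.
\item Since $\un{i} \perp F$, invariance gives $\sca{X_\un{i}, \L^{(\pi,t)}_F X_\un{i}} \subseteq \L^{(\pi,t)}_F$, that is, $\L^{(\pi,t)}_F \subseteq X_\un{i}^{-1}(\L^{(\pi,t)}_F)$.
\end{itemize}
If one prefers to avoid citing these properties, both can be checked directly from (\ref{Eq:proptrick}) using $q_{F\cup\{i\}} = q_F(I_H - p_\un{i})$ and Proposition \ref{P:pf reducing}.

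\emph{The inclusion $\supseteq$.} This is the real content. Let $a \in X_\un{i}^{-1}(\L^{(\pi,t)}_F) \cap \L^{(\pi,t)}_{F\cup\{i\}}$. Since the $p_\un{j}$ commute, we have $q_{F\cup\{i\}} = q_F(I_H - p_\un{i})$. Hence $\pi(a)q_{F\cup\{i\}} = 0$ yields
\[
\pi(a)q_F = \pi(a)q_F p_\un{i}.
\]
It therefore suffices to show that $\pi(a)q_F$ vanishes on the range of $p_\un{i}$, which is $[t_\un{i}(X_\un{i})H]$. In other words, we must show that $\pi(a)q_F t_\un{i}(\xi) = 0$ for all $\xi \in X_\un{i}$.

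Because $\un{i} \perp F$, Proposition \ref{P:pf reducing} gives $q_F t_\un{i}(\xi) = t_\un{i}(\xi)q_F$, and also that $q_F$ commutes with $\pi(A)$. Using this, compute
\[
\|\pi(a)q_F t_\un{i}(\xi)\|^2 = \|q_F t_\un{i}(\xi)^*\pi(a^*a)t_\un{i}(\xi)q_F\| = \|\pi(\sca{\xi, a^*a\xi})q_F\|.
\]
Now $X_\un{i}^{-1}(\L^{(\pi,t)}_F)$ is an ideal, because $\L^{(\pi,t)}_F$ is one, so $a^*a$ lies in it. Consequently $\sca{\xi, a^*a\xi} \in \L^{(\pi,t)}_F$, and by (\ref{Eq:proptrick}) we get $\pi(\sca{\xi,a^*a\xi})q_F = 0$. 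Thus $\pi(a)q_F p_\un{i} = 0$, hence $\pi(a)q_F = 0$, and (\ref{Eq:proptrick}) gives $a \in \L^{(\pi,t)}_F$.

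The case $F = \mt$ is covered by the same argument, with $q_\mt = I_H$. The main point needing care is the commutation step $q_F t_\un{i}(\xi) = t_\un{i}(\xi)q_F$, which is precisely where the hypothesis $i \notin F$ enters.
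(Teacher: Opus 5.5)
Your proof is correct. It follows the same skeleton as the paper's argument. The forward inclusion comes from invariance and partial ordering of $\L^{(\pi,t)}$. The reverse inclusion splits $\pi(a)q_F$ into the part killed by $\pi(a)q_{F\cup\{i\}}=0$ and a $p_{\un{i}}$-part, which is killed using $a\in X_{\un{i}}^{-1}(\L_F^{(\pi,t)})$ together with $q_Ft_{\un{i}}(\xi)=t_{\un{i}}(\xi)q_F$.

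You carry out the key step differently. The paper extracts only the two-sided compression $\psi_{\un{i}}(\K(X_{\un{i}}))\pi(a)q_F\psi_{\un{i}}(\K(X_{\un{i}}))=\{0\}$. It then expands $\pi(a)q_F$ as an alternating sum via Proposition \ref{P:prod cai}, and uses Nica-covariance with the approximate-unit limit of Proposition \ref{P:sca ai} to obtain the identity (\ref{Eq:ilift}). Finally it adds (\ref{Eq:ilift}) to the alternating-sum form of $\pi(a)q_{F\cup\{i\}}=0$. That bookkeeping is exactly your decomposition $\pi(a)q_F=\pi(a)q_Fp_{\un{i}}+\pi(a)q_{F\cup\{i\}}$, written out inside $\ca(\pi,t)$.

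You instead work with the projections directly. You use $q_{F\cup\{i\}}=q_F(I_H-p_{\un{i}})$, which relies on commutativity of the $p_{\un{j}}$. You then apply the C*-identity to $a^*a$, which is legitimate because $X_{\un{i}}^{-1}(\L_F^{(\pi,t)})$ is an ideal, to get the one-sided vanishing $\pi(a)q_Ft_{\un{i}}(\xi)=0$. This disposes of the $p_{\un{i}}$-part without any Nica-covariance computation or approximate units. Your route is shorter, while the paper's has the merit of producing the explicit relation (\ref{Eq:ilift}) among elements of $\ca(\pi,t)$.

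One minor point: you only need $[\psi_{\un{i}}(\K(X_{\un{i}}))H]\subseteq[t_{\un{i}}(X_{\un{i}})H]$. This is immediate from $\psi_{\un{i}}(\Theta_{\xi,\eta})=t_{\un{i}}(\xi)t_{\un{i}}(\eta)^*$, so the claimed equality of ranges is not needed.
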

\begin{proof}
We have already remarked that $\L^{(\pi,t)}$ consists of ideals.
Thus, fixing $F\subsetneq[d]$ and $i\in[d]\setminus F$, it suffices to show that
\[
\L_F^{(\pi,t)}=X_\un{i}^{-1}(\L_F^{(\pi,t)})\cap\L_{F\cup\{i\}}^{(\pi,t)}.
\]
The forward inclusion is immediate since $\L^{(\pi,t)}$ is invariant and partially ordered by \cite[Proposition 3.1.14]{DeK24}.
For the reverse inclusion, fix $a\in X_\un{i}^{-1}(\L_F^{(\pi,t)})\cap\L_{F\cup\{i\}}^{(\pi,t)}$.
Applying (\ref{Eq:proptrick}) twice, we obtain that
\[
t_\un{i}(X_\un{i})^*\pi(a)q_Ft_\un{i}(X_\un{i})=t_\un{i}(X_\un{i})^*\pi(a)t_\un{i}(X_\un{i})q_F=\pi(\sca{X_\un{i},aX_\un{i}})q_F=\{0\}\;\text{and}\;\pi(a)q_{F\cup\{i\}}=0,
\]
where we also use Proposition \ref{P:pf reducing} in the first equality.
In particular, it follows that
\[
\psi_\un{i}(\K(X_\un{i}))\pi(a)q_F\psi_\un{i}(\K(X_\un{i}))=\{0\},
\]
since $\psi_\un{i}(\K(X_\un{i}))=[t_\un{i}(X_\un{i})t_\un{i}(X_\un{i})^*]$.
Fixing $k_\un{i},k_\un{i}'\in\K(X_\un{i})$ and writing $\pi(a)q_F$ as an alternating sum using Proposition \ref{P:prod cai}, we deduce that
\begin{equation}\label{Eq:altsumzero}
\psi_\un{i}(k_\un{i})\pi(a)\psi_\un{i}(k_\un{i}')+\sum\{(-1)^{|\un{n}|}\psi_\un{i}(k_\un{i})\psi_\un{n}(\phi_\un{n}(a))\psi_\un{i}(k_\un{i}')\mid\un{0}\neq\un{n}\leq\un{1}_F\}=0.
\end{equation}
Note that we take the $\Sigma$-summand to be $0$ when $F=\mt$.
For $\un{0}\neq\un{n}\leq\un{1}_F$, we have that
\[
\psi_\un{i}(k_\un{i})\psi_\un{n}(\phi_\un{n}(a))\psi_\un{i}(k_\un{i}')=\psi_{\un{n}+\un{i}}(\iota_\un{i}^{\un{n}+\un{i}}(k_\un{i})\iota_\un{n}^{\un{n}+\un{i}}(\phi_\un{n}(a))\iota_\un{i}^{\un{n}+\un{i}}(k_\un{i}'))
\]
by Nica-covariance of $(\pi,t)$, noting that $\un{n}\vee\un{i}=\un{n}+\un{i}$ since $\un{n}\perp\un{i}$.
As $k_\un{i},k_\un{i}'\in\K(X_\un{i})$ are arbitrary, we may replace them by members of an approximate unit $(k_{\un{i},\la})_{\la\in\La}$ of $\K(X_\un{i})$ and use Proposition \ref{P:sca ai} (taking $F=\{i\}$ therein) to obtain that
\[
\nor{\cdot}\text{-}\lim_\la\psi_\un{i}(k_{\un{i},\la})\psi_\un{n}(\phi_\un{n}(a))\psi_\un{i}(k_{\un{i},\la})=\psi_{\un{n}+\un{i}}(\phi_{\un{n}+\un{i}}(a))\foral\un{n}\leq\un{1}_F,
\]
noting that $\psi_\un{0}(\phi_\un{0}(a))=\pi(a)$ and that $\iota_\un{n}^{\un{n}+\un{i}}(\phi_\un{n}(a))=\phi_{\un{n}+\un{i}}(a)$ for all $\un{n}\leq\un{1}_F$.
Combining this with (\ref{Eq:altsumzero}), we deduce that
\begin{equation}\label{Eq:ilift}
\psi_\un{i}(\phi_\un{i}(a))+\sum\{(-1)^{|\un{n}|}\psi_{\un{n}+\un{i}}(\phi_{\un{n}+\un{i}}(a))\mid\un{0}\neq\un{n}\leq\un{1}_F\}=0.
\end{equation}
Recalling that $\pi(a)q_{F\cup\{i\}}=0$, another application of Proposition \ref{P:prod cai} gives that
\begin{equation}\label{Eq:Fcupireln}
\pi(a)+\sum\{(-1)^{|\un{n}|}\psi_\un{n}(\phi_\un{n}(a))\mid\un{0}\neq\un{n}\leq\un{1}_{F\cup\{i\}}\}=0.
\end{equation}
By summing (\ref{Eq:ilift}) and (\ref{Eq:Fcupireln}), we deduce that
\[
\pi(a)+\sum\{(-1)^{|\un{n}|}\psi_\un{n}(\phi_\un{n}(a))\mid\un{0}\neq\un{n}\leq\un{1}_F\}=0.
\]
It follows that $a\in\L_F^{(\pi,t)}$, completing the proof.
\end{proof}

\begin{proposition}\label{P:Tfamga}
Let $X$ be a proper product system over $\bZ_+^d$ with coefficients in a C*-algebra $A$ and suppose that $\L$ is a $2^d$-tuple of $X$.
Then $\L=\L^{(\pi,t)}$ for some Nica-covariant representation $(\pi,t)$ of $X$ that admits a gauge action if and only if $\L$ is a T-family of $X$.
\end{proposition}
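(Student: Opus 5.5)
The forward implication is immediate from Proposition \ref{P:repTfam}: if $\L=\L^{(\pi,t)}$ for some Nica-covariant representation $(\pi,t)$ (gauge action or not), then $\L$ is a T-family.

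For the converse, assume $\L$ is a T-family. The natural candidate representation is the universal $\L$-relative CNP-representation $(\pi,t):=(\pi_X^\L,t_X^\L)$ on $\N\O(\L,X)=\N\T_X/\fJ_\L$. It is Nica-covariant and admits a gauge action, since $\fJ_\L$ is gauge-invariant. It therefore suffices to show $\L^{(\pi,t)}=\L$.

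The inclusion $\L\subseteq\L^{(\pi,t)}$ is clear: $(\pi,t)$ is $\L$-relative CNP, so $\pi(\L_F)q_F=\{0\}$ for all $F\subseteq[d]$, and (\ref{Eq:proptrick}) gives $\L_F\subseteq\L_F^{(\pi,t)}$.

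For the reverse inclusion I would invoke Theorem \ref{T:tfamGIUT}. The pair $(\pi,t)$ is an $\L$-relative CNP-representation with a gauge action, and $\N\O(\L,X)\cong\ca(\pi,t)$ via the identity map, which is canonical. The "only if" direction of Theorem \ref{T:tfamGIUT} then says that $\pi(a)q_F=0$ implies $a\in\L_F$. By Lemma \ref{L:GIUTequiv} this is exactly $\L=\L^{(\pi,t)}$.

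The only real content is that the GIUT for T-families, applied to the universal object itself, forces faithfulness of the relations. In other words, the T-family condition (\ref{Eq:Tfam}) guarantees that $\fJ_\L$ imposes no relations $\pi(a)q_F=0$ beyond those with $a\in\L_F$. This is hidden in \cite[Corollary 4.14]{BB24}, which we take as given, so I expect no genuine obstacle. The one point to double-check is that the "only if" direction of Theorem \ref{T:tfamGIUT} is legitimately applicable to the identity isomorphism, i.e. that the theorem is stated as a genuine equivalence for every $\L$-relative CNP-representation. As stated above, it is.
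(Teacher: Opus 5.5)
Your proposal is correct and matches the paper's proof: the forward direction comes from Proposition \ref{P:repTfam}, and the converse applies Theorem \ref{T:tfamGIUT} to the identity isomorphism for the universal $\L$-relative CNP-representation $(\pi_X^\L,t_X^\L)$ of $\N\O(\L,X)$ and then invokes Lemma \ref{L:GIUTequiv}. Your separate check that $\L\subseteq\L^{(\pi,t)}$ is harmless but redundant, since Lemma \ref{L:GIUTequiv} already contains it.
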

\begin{proof}
The forward implication follows by Proposition \ref{P:repTfam}.
For the converse, assume that $\L$ is a T-family of $X$.
Let $(\pi_X^\L,t_X^\L)$ denote the universal $\L$-relative CNP-representation of $X$.
Since $\N\O(\L,X)$ is canonically $*$-isomorphic to itself via the identity map, combining Theorem \ref{T:tfamGIUT} and Lemma \ref{L:GIUTequiv} yields that $\L=\L^{(\pi_X^\L,t_X^\L)}$ and that $(\pi_X^\L,t_X^\L)$ admits a gauge action.
This completes the proof.
\end{proof}

Combining Propositions \ref{P:maxNT} and \ref{P:Tfamga}, we obtain the promised alignment of NT-$2^d$-tuples with T-families, since both sets of objects coincide with the set of $2^d$-tuples of the form $\L^{(\pi,t)}$ for a Nica-covariant representation $(\pi,t)$ that admits a gauge action.
The alignment of NO-$2^d$-tuples with O-families follows as an immediate consequence.

\section{Connection between NT-$2^d$-tuples and T-families}\label{S:NT=T}

\noindent A shortcoming of the argument provided at the end of Section \ref{S:BBDeK} lies in its indirectness, i.e., it requires a strong understanding of the technical machinery of \cite{BB24, DeK24} (e.g., two Gauge-Invariant Uniqueness Theorems are used in the proof).
To remedy this, we will instead seek to establish the alignment of NT-$2^d$-tuples with T-families directly, using the definitions alone.
In this way, we will be able to eschew any and all discussion of Nica-covariant representations, gauge actions and Gauge-Invariant Uniqueness Theorems.

Throughout this section we take $X$ to be a proper product system over $\bZ_+^d$ with coefficients in a C*-algebra $A$.
To show the alignment of NT-$2^d$-tuples with T-families, we first show that every NT-$2^d$-tuple is a T-family and then that every T-family is an NT-$2^d$-tuple.
Both directions will require some auxiliary results, so they are given their own subsections.

\subsection{NT-$2^d$-tuples are T-families}\label{Ss:NTimpliesT}

We commence with a proposition that generalises \cite[Lemma 4.3.4]{DFK17} from the context of C*-dynamical systems to the context of strong compactly aligned product systems.

\begin{proposition}\label{P:Jpassdown}
Let $X$ be a strong compactly aligned product system with coefficients in a C*-algebra $A$.
Then we have that 
\[
X_\un{i}^{-1}(\J_F)\cap\J_{F\cup\{i\}}\subseteq\J_F\foral F\subsetneq[d] \; \text{and} \; i\in[d]\setminus F.
\]
\end{proposition}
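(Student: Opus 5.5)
We prove the inclusion element by element. Fix $F\subsetneq[d]$, $i\in[d]\setminus F$ and $a\in X_\un{i}^{-1}(\J_F)\cap\J_{F\cup\{i\}}$; the goal is $a\in\J_F$. First we dispose of $F=\mt$. Here $\J_\mt=\{0\}$, so $X_\un{i}^{-1}(\J_\mt)=\ker\phi_\un{i}$, while $\J_{\{i\}}\subseteq(\ker\phi_\un{i})^\perp$. Hence $a\in\ker\phi_\un{i}\cap(\ker\phi_\un{i})^\perp=\{0\}=\J_\mt$, since $a^*a=0$.

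Now assume $F\neq\mt$ and set $K:=\bigcap_{j\in F}\ker\phi_\un{j}$, an ideal of $A$. The compactness component of $\J_F$, namely $a\in\bigcap_{j\in[d]}\phi_\un{j}^{-1}(\K(X_\un{j}))$, is inherited directly from $a\in\J_{F\cup\{i\}}$. So it only remains to show $ab=0$ for every $b\in K$. The strategy is to prove that $ab\in\bigcap_{j\in F\cup\{i\}}\ker\phi_\un{j}$. Once this holds, we use that $a^*\in\J_{F\cup\{i\}}$ (ideals are selfadjoint) and that $\J_{F\cup\{i\}}\subseteq(\bigcap_{j\in F\cup\{i\}}\ker\phi_\un{j})^\perp$. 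This gives $a^*ab=0$, and therefore $\nor{ab}^2=\nor{b^*a^*ab}=0$.

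Since $K$ is an ideal, $ab\in K$ automatically, so the real task is to show $\phi_\un{i}(ab)=0$. We aim for $\sca{X_\un{i},abX_\un{i}}=\{0\}$, in two steps.

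\emph{Step 1: $\sca{X_\un{i},bX_\un{i}}\subseteq K$ for $b\in K$.} Fix $j\in F$. Because $\un{i}\perp\un{j}$, the product system gives $X_{\un{i}+\un{j}}=[X_\un{j}X_\un{i}]=[X_\un{i}X_\un{j}]$. Also $\phi_{\un{i}+\un{j}}(b)(\xi_\un{j}\xi_\un{i})=(\phi_\un{j}(b)\xi_\un{j})\xi_\un{i}=0$, so $\phi_{\un{i}+\un{j}}(b)=0$. Consequently
\[
(\phi_\un{i}(b)\xi_\un{i})\xi_\un{j}=\phi_{\un{i}+\un{j}}(b)(\xi_\un{i}\xi_\un{j})=0
\]
for all $\xi_\un{i}\in X_\un{i}$ and $\xi_\un{j}\in X_\un{j}$. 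By Lemma \ref{L:lance} (applied to $X_\un{i}\otimes_A X_\un{j}$) this forces $\phi_\un{j}(\sca{b\xi_\un{i},b\xi_\un{i}})=0$. Hence $\sca{b\eta,b\eta}\in K$ for all $\eta\in X_\un{i}$, and by polarisation and the ideal property we obtain the claimed inclusion.

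\emph{Step 2: $\sca{X_\un{i},abX_\un{i}}=\{0\}$.} Given $\eta\in X_\un{i}$, Step 1 gives $\sca{b\eta,b\eta}\in K$. By the standard factorisation fact for Hilbert modules (cf. the discussion of $XI$ preceding Lemma \ref{L:Kat07}), we can write $b\eta=\zeta c$ with $\zeta\in X_\un{i}$ and $c\in K$. Then, for any $\xi\in X_\un{i}$,
\[
\sca{\xi,ab\eta}=\sca{\xi,a\zeta}c\in\J_F K=\{0\},
\]
because $\sca{X_\un{i},aX_\un{i}}\subseteq\J_F\subseteq K^\perp$. Thus $\phi_\un{i}(ab)=0$, which completes the strategy described above.

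The main obstacle is Step 1, which transfers the annihilation of $X_\un{j}$ by $b$ into a statement about the inner products $\sca{X_\un{i},bX_\un{i}}$. This is where the commutation $X_\un{i}X_\un{j}\cong X_\un{j}X_\un{i}$ is essential, and some care is needed with possibly degenerate fibres.
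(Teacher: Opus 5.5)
Your proof is correct. It has the same skeleton as the paper's: the same $F=\mt$ case, the same intermediate claim $\sca{X_{\un{i}},bX_{\un{i}}}\subseteq K:=\bigcap_{j\in F}\ker\phi_{\un{j}}$ for $b\in K$, and the same finish via $ab\in\bigcap_{j\in F\cup\{i\}}\ker\phi_{\un{j}}$ together with the annihilator property of $\J_{F\cup\{i\}}$. Both steps, however, are carried out differently.

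In Step 1, the paper proves the off-diagonal identity $\phi_{\un{j}}(\sca{\xi_{\un{i}},b\eta_{\un{i}}})=\tau(\xi_{\un{i}})^*\phi_{\un{j}+\un{i}}(b)\tau(\eta_{\un{i}})$, where $\tau(\xi_{\un{i}})=u_{\un{i},\un{j}}\circ\Theta_{\xi_{\un{i}}}$. This gives the full inclusion at once. You only obtain diagonal entries from the norm formula of Lemma \ref{L:lance}. Since your Step 2 uses nothing beyond $\sca{b\eta,b\eta}\in K$, your polarisation remark can simply be dropped.

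In Step 2, the paper shows $\phi_{\un{i}}(a)\K(X_{\un{i}})\phi_{\un{i}}(b)=\{0\}$. To conclude $\phi_{\un{i}}(ab)=0$ it then needs $\phi_{\un{i}}(a)\in\K(X_{\un{i}})$, supplied by $a\in\J_{F\cup\{i\}}$, and an approximate unit of $\K(X_{\un{i}})$. You instead use the Hilbert-module fact that $\sca{x,x}\in K$ forces $x=\zeta c$ with $c\in K$.

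The trade-off is this. The paper avoids the factorisation lemma but leans on compactness. Your argument never uses compactness, so it proves the annihilator part $X_{\un{i}}^{-1}(K^\perp)\cap(K\cap\ker\phi_{\un{i}})^\perp\subseteq K^\perp$ for an arbitrary product system over $\bZ_+^d$.

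Your closing concern about degenerate fibres is unfounded. Only $u_{\un{i},\un{j}}$ and $u_{\un{j},\un{i}}$ with non-zero indices appear, and these are unitaries by axiom (iv).
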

\begin{proof}
First we prove the claim for $F=\mt$.
To this end, take $i\in[d]$ and $a\in X_\un{i}^{-1}(\J_\mt)\cap\J_{\{i\}}$.
By definition of $X_\un{i}^{-1}(\J_\mt)$, we have that
\[
\sca{X_\un{i},aX_\un{i}}\subseteq\J_\mt=\{0\}
\]
and thus $a\in\ker\phi_\un{i}$.
Since $a\in\J_{\{i\}}\subseteq(\ker\phi_\un{i})^\perp$, it follows that $a=0$, as required.

Now fix $\mt\neq F\subsetneq[d], i\in[d]\setminus F$ and $a\in X_\un{i}^{-1}(\J_F)\cap\J_{F\cup\{i\}}$.
Then by definition we have that
\[
\sca{X_\un{i},aX_\un{i}}\subseteq(\bigcap_{j\in F}\ker\phi_\un{j})^\perp\cap(\bigcap_{j\in[d]}\phi_\un{j}^{-1}(\K(X_\un{j})))\;\text{and}\; a\in(\bigcap_{j\in F\cup\{i\}}\ker\phi_\un{j})^\perp\cap(\bigcap_{j\in[d]}\phi_\un{j}^{-1}(\K(X_\un{j}))).
\]
Showing that $a\in\J_F$ amounts to proving that $a\in(\bigcap_{j\in F}\ker\phi_\un{j})^\perp$, so fix $b\in\bigcap_{j\in F}\ker\phi_\un{j}$.
We claim that 
\[
\sca{X_\un{i},bX_\un{i}}\subseteq\bigcap_{j\in F}\ker\phi_\un{j}.
\]
To see this, fix $\xi_\un{i},\eta_\un{i}\in X_\un{i}$ and $j\in F$.
We define operators $\tau(\xi_\un{i}),\tau(\eta_\un{i})\in\L(X_\un{j},X_{\un{j}+\un{i}})$ by
\[
\tau(\xi_\un{i}):=u_{\un{i},\un{j}}\circ\Theta_{\xi_\un{i}}\qand\tau(\eta_\un{i}):=u_{\un{i},\un{j}}\circ\Theta_{\eta_\un{i}},
\]
where the operators $\Theta_{\xi_\un{i}},\Theta_{\eta_\un{i}}\in\L(X_\un{j},X_\un{i}\otimes_A X_\un{j})$ are defined as in Lemma \ref{L:lance} (taking $X=X_\un{i}$ and $Y=X_\un{j}$ therein).
It is routine to check that
\[
\phi_\un{j}(\sca{\xi_\un{i},b\eta_\un{i}})=\tau(\xi_\un{i})^*\phi_{\un{j}+\un{i}}(b)\tau(\eta_\un{i}).
\]
In turn, noting that $b\in\ker\phi_\un{j}$, we obtain that
\[
\tau(\xi_\un{i})^*\phi_{\un{j}+\un{i}}(b)\tau(\eta_\un{i})=\tau(\xi_\un{i})^*\iota_\un{j}^{\un{j}+\un{i}}(\phi_\un{j}(b))\tau(\eta_\un{i})=0.
\]
Since $\xi_\un{i},\eta_\un{i}\in X_\un{i}$ and $j\in F$ are arbitrary, we deduce that $\sca{X_\un{i},bX_\un{i}}\subseteq\bigcap_{j\in F}\ker\phi_\un{j}$, as claimed.
Hence we have that
\[
\sca{X_\un{i},aX_\un{i}}\sca{X_\un{i},bX_\un{i}}=\{0\}.
\]
Fixing $\xi_\un{i},\eta_\un{i},\ze_\un{i},\nu_\un{i}\in X_\un{i}$, we compute the following:
\begin{align*}
\sca{\xi_\un{i},(\phi_\un{i}(a)\Theta_{\eta_\un{i},\ze_\un{i}}\phi_\un{i}(b))\nu_\un{i}}=\sca{\xi_\un{i},a(\Theta_{\eta_\un{i},\ze_\un{i}}(b\nu_\un{i}))}=\sca{\xi_\un{i},a(\eta_\un{i}\sca{\ze_\un{i},b\nu_\un{i}})}=\sca{\xi_\un{i},a\eta_\un{i}}\sca{\ze_\un{i},b\nu_\un{i}}=0.
\end{align*}
Since $\xi_\un{i},\nu_\un{i}\in X_\un{i}$ are arbitrary, we deduce that
\[
\phi_\un{i}(a)\Theta_{\eta_\un{i},\ze_\un{i}}\phi_\un{i}(b)=0\foral\eta_\un{i},\ze_\un{i}\in X_\un{i}.
\]
In turn, because $\eta_\un{i},\ze_\un{i}\in X_\un{i}$ are arbitrary, it follows that
\[
\phi_\un{i}(a)\K(X_\un{i})\phi_\un{i}(b)=\{0\}.
\]
Since $\phi_\un{i}(a)\in\K(X_\un{i})$, an application of an approximate unit of $\K(X_\un{i})$ gives that $\phi_\un{i}(ab)=0$ and hence $ab\in\bigcap_{j\in F\cup\{i\}}\ker\phi_\un{j}$.
However, we also have that $ab\in(\bigcap_{j\in F\cup\{i\}}\ker\phi_\un{j})^\perp$ since $a\in\J_{F\cup\{i\}}$.
Thus $ab=0$ and so $a\in(\bigcap_{j\in F}\ker\phi_\un{j})^\perp$, completing the proof.
\end{proof}

It should be noted that $\J$ is not a T-family in general.
To instantiate this, we will require a product system construction residing in the theory of C*-dynamical systems.
Accordingly, we defer the provision of a counterexample until Subsection \ref{Ss:c*-dynsys} (see Remark \ref{R:Jnotinv}).

We are now ready to prove that all NT-$2^d$-tuples are T-families.



\begin{proposition}\label{P:NTimpliesT}
Let $X$ be a proper product system over $\bZ_+^d$ with coefficients in a C*-algebra $A$.
Then every NT-$2^d$-tuple of $X$ is a T-family of $X$.
\end{proposition}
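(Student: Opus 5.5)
We aim to verify, for an NT-$2^d$-tuple $\L$, the identity
\[
\L_F=X_\un{i}^{-1}(\L_F)\cap\L_{F\cup\{i\}}\quad\text{for all } F\subsetneq[d] \text{ and } i\in[d]\setminus F .
\]
The inclusion ``$\subseteq$'' is immediate. Since $\un{i}\perp F$, invariance gives $\sca{X_\un{i},\L_FX_\un{i}}\subseteq\L_F$. Partial ordering gives $\L_F\subseteq\L_{F\cup\{i\}}$.

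For ``$\supseteq$'' we fix $i$ and argue by downward induction on $|F|$. The inductive hypothesis is that the T-identity already holds for every $D$ with $F\subsetneq D\subsetneq[d]$ and every $j\notin D$. Set $S:=X_\un{i}^{-1}(\L_F)\cap\L_{F\cup\{i\}}$. We will show $S\subseteq\L_F$ by checking that $S$ lies in the left-hand side of the condition in Proposition \ref{P:NTcom}, and then applying condition (iv).

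\emph{Case $F=\mt$.} Here $S\subseteq X_\un{i}^{-1}(\L_\mt)\cap J_{\{i\}}(\L_\mt,X)$, by condition (i). Since $X_{\{i\}}^{-1}=X_\un{i}^{-1}$, this intersection equals $\L_\mt$ by Proposition \ref{P:quo ideal}(iii), and we are done.

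\emph{Case $F\neq\mt$: invariance of $S$.} First we show that $S$ satisfies $\sca{X_\un{n},SX_\un{n}}\subseteq S$ for all $\un{n}\perp F$. It suffices to treat the generators $\un{n}=\un{i}$ and $\un{n}\perp F\cup\{i\}$.
\begin{itemize}
\item For $\un{n}=\un{i}$: we have $\sca{X_\un{i},SX_\un{i}}\subseteq\L_F\subseteq\L_{F\cup\{i\}}$. Moreover $\sca{X_\un{i},\L_FX_\un{i}}\subseteq\L_F$, so these elements also lie in $X_\un{i}^{-1}(\L_F)$.
\item For $\un{n}\perp F\cup\{i\}$: $\L_{F\cup\{i\}}$ is invariant. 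For $X_\un{i}^{-1}(\L_F)$, we use $X_\un{n}X_\un{i}\cong X_\un{i}X_\un{n}$ together with invariance of $\L_F$ under $\un{n}$, since $\sca{X_\un{i},\sca{X_\un{n},aX_\un{n}}X_\un{i}}\subseteq\sca{X_\un{n},\sca{X_\un{i},aX_\un{i}}X_\un{n}}$.
\end{itemize}

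\emph{The three memberships.} By invariance of $S$, it is enough to show that $S$ is contained in $J_F(\L_\mt,X)$, in $\L_{\inv,F}$ and in $\L_{\lim,F}$.
\begin{itemize}
\item $S\subseteq J_F(\L_\mt,X)$. Pass to the quotient $[X]_{\L_\mt}$. By conditions (i), (iii) and Proposition \ref{P:quo ideal}(ii), $[S]_{\L_\mt}$ lies in $X_\un{i}^{-1}(\J_F([X]_{\L_\mt}))\cap\J_{F\cup\{i\}}([X]_{\L_\mt})$. Proposition \ref{P:Jpassdown}, applied to $[X]_{\L_\mt}$, shows this is contained in $\J_F([X]_{\L_\mt})$. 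Pulling back with Proposition \ref{P:quo ideal}(ii) gives $S\subseteq J_F(\L_\mt,X)$, and then invariance gives $S\subseteq\bigcap_{\un{n}\perp F}X_\un{n}^{-1}(J_F(\L_\mt,X))$.
\item $S\subseteq\L_{\inv,F}$. We need $S\subseteq\L_D$ for every $D\supsetneq F$, after which invariance of $S$ finishes the job. If $i\in D$, then $S\subseteq\L_{F\cup\{i\}}\subseteq\L_D$. If $i\notin D$, then $S\subseteq X_\un{i}^{-1}(\L_D)\cap\L_{D\cup\{i\}}$, which equals $\L_D$ by the inductive hypothesis.
\item $S\subseteq\L_{\lim,F}$. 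For $a\in S$ we have $\sca{X_\un{i},aX_\un{i}}\subseteq\L_F$. By properness and (\ref{Eq: comp}), $\phi_\un{i}(a)\in\K(X_\un{i}\L_F)$. For $\un{m}\geq\un{i}$ with $\un{m}\perp F$, the compact operator $\phi_\un{m}(a)$ satisfies
\[
\sca{X_\un{m},aX_\un{m}}\subseteq\sca{X_{\un{m}-\un{i}},\L_FX_{\un{m}-\un{i}}}\subseteq\L_F,
\]
so $\phi_\un{m}(a)\in\K(X_\un{m}\L_F)$. Hence the limit is eventually zero.
\end{itemize}
Condition (iv) then gives $S\subseteq\L_F$.

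The main obstacle is the $\L_{\inv,F}$ membership. It forces the downward induction on $|F|$, because the sets $\L_D$ with $i\notin D$ are not controlled by $\L_{F\cup\{i\}}$ alone. Checking the invariance of $X_\un{i}^{-1}(\L_F)$ under $\un{n}\perp F\cup\{i\}$ also needs care with the product system isomorphisms.
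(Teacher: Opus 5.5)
Your proof is correct and follows essentially the same route as the paper. The shared ingredients are downward induction on $|F|$, reduction to the condition of Proposition \ref{P:NTcom}, Proposition \ref{P:Jpassdown} applied in the quotient $[X]_{\L_\mt}$, the identification $X_{\un{n}+\un{i}}=X_{\un{i}+\un{n}}$ to move $X_\un{i}^{-1}(\L_F)$ past $\un{n}\perp F\cup\{i\}$, and the inductive hypothesis for $\L_D$ with $i\notin D$. The differences are organisational: you show at the outset that $S$ is invariant under every $\un{n}\perp F$, which removes the paper's case split $n_i>0$ versus $n_i=0$ and merges its base case with the inductive step, and you use Proposition \ref{P:quo ideal}(iii) for $F=\mt$ and a direct eventual-vanishing argument for $\L_{\lim,F}$ in place of the approximate-unit argument and Proposition \ref{P:lim perp}.
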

\begin{proof}
Let $\L$ be an NT-$2^d$-tuple of $X$.
Then $\L$ consists of ideals by item (i) of Definition \ref{D:NT tuple}.
It remains to check that $\L$ satisfies (\ref{Eq:Tfam}).
In other words, we must verify that
\begin{equation*}
\L_F=X_\un{i}^{-1}(\L_F)\cap\L_{F\cup\{i\}}\foral F\subsetneq[d] \; \text{and} \; i\in[d]\setminus F.
\end{equation*}

We begin by addressing the case where $F=\mt$.
Fixing $i\in[d]$, note that $\L_\mt\subseteq X_\un{i}^{-1}(\L_\mt)$ since $\L$ is invariant by item (ii) of Definition \ref{D:NT tuple}.
We also have that $\L_\mt\subseteq\L_{\{i\}}$ because $\L$ is partially ordered by item (iii) of Definition \ref{D:NT tuple}.
This shows that $\L_\mt\subseteq X_\un{i}^{-1}(\L_\mt)\cap\L_{\{i\}}$.
For the reverse inclusion, take $a\in X_\un{i}^{-1}(\L_\mt)\cap\L_{\{i\}}$.
An application of item (i) of Definition \ref{D:NT tuple} gives that $a\in J_{\{i\}}(\L_\mt,X)$ and hence $aX_\un{i}^{-1}(\L_\mt)\subseteq\L_\mt$.
Since $a\in X_\un{i}^{-1}(\L_\mt)$ by assumption, an application of an approximate unit of $X_\un{i}^{-1}(\L_\mt)$ yields that $a\in\L_\mt$.
Hence we have that
\[
\L_\mt=X_\un{i}^{-1}(\L_\mt)\cap\L_{\{i\}}\foral i\in[d].
\]

To account for $F\neq\mt$, we proceed by strong downward induction on $|F|$.
For the base case, fix $\mt\neq F\subsetneq[d]$ such that $|F|=d-1$ and $i\in[d]\setminus F$.
Note that $\L_F\subseteq X_\un{i}^{-1}(\L_F)\cap\L_{[d]}$ since $\L$ is invariant and partially ordered.
For the reverse inclusion, take $a\in X_\un{i}^{-1}(\L_F)\cap\L_{[d]}$.
By Proposition \ref{P:NTcom}, it suffices to show that
\[
a\in\bigg(\bigcap_{\un{n}\perp F}X_\un{n}^{-1}(J_F(\L_\mt,X))\bigg)\cap\L_{\inv,F}\cap\L_{\lim,F}.
\]
To this end, fix $\un{n}=(n_1,\dots,n_d)\perp F$.
First suppose that $n_i>0$.
Then we may write $\un{n}=\un{i}+\un{m}$ for some $\un{m}\perp F$.
Since $X_\un{i}\otimes_A X_\un{m}\cong X_\un{n}$ via the multiplication map $u_{\un{i},\un{m}}$, we obtain that
\begin{align}\label{Eq:tenstrick}
\sca{X_\un{n},aX_\un{n}}=\sca{X_\un{i}\otimes_A X_\un{m},a(X_\un{i}\otimes_A X_\un{m})}\subseteq[\sca{X_\un{m},\sca{X_\un{i},aX_\un{i}}X_\un{m}}] \subseteq\L_F\subseteq J_F(\L_\mt,X), 
\end{align}
using that $a\in X_\un{i}^{-1}(\L_F)$ and that $\L$ is invariant in the second inclusion, and item (i) of Definition \ref{D:NT tuple} in the final inclusion.
Thus $a\in X_\un{n}^{-1}(J_F(\L_\mt,X))$.
Now suppose that $n_i=0$, so that $\un{n}=\un{0}$ because $|F|=d-1$.
We must show that $a\in J_F(\L_\mt,X)$.
This is equivalent to showing that $[a]_{\L_\mt}\in\J_F([X]_{\L_\mt})$ by item (ii) of Proposition \ref{P:quo ideal}, which applies since $\L$ is invariant.
To this end, note that
\[
\sca{X_\un{i},aX_\un{i}}\subseteq\L_F\subseteq J_F(\L_\mt,X)=[\hspace{1pt}\cdot\hspace{1pt}]_{\L_\mt}^{-1}(\J_F([X]_{\L_\mt}))
\]
and that
\[
a\in\L_{[d]}\subseteq J_{[d]}(\L_\mt,X)=[\hspace{1pt}\cdot\hspace{1pt}]_{\L_\mt}^{-1}(\J_{[d]}([X]_{\L_\mt}))
\]
by assumption.
In other words, we have that
\[
[a]_{\L_\mt}\in[X_\un{i}]_{\L_\mt}^{-1}(\J_F([X]_{\L_\mt}))\cap\J_{[d]}([X]_{\L_\mt})
\]
and so $[a]_{\L_\mt}\in\J_F([X]_{\L_\mt})$ by Proposition \ref{P:Jpassdown}, which applies since $[X]_{\L_\mt}$ is proper by Lemma \ref{L:Kat07} (and so $[X]_{\L_\mt}$ is strong compactly aligned by Proposition \ref{P:qntrprodsys}).
In total, we deduce that
\[
a\in\bigcap_{\un{n}\perp F}X_\un{n}^{-1}(J_F(\L_\mt,X)).
\]

To see that $a\in\L_{\inv,F}\equiv\bigcap_{\un{n}\perp F}X_\un{n}^{-1}(\L_{[d]})$, fix $\un{n}=(n_1,\dots,n_d)\perp F$.
If $n_i>0$ then we may argue as in (\ref{Eq:tenstrick}), replacing ``$J_F(\L_\mt,X)$" by ``$\L_{[d]}$" and invoking the partial ordering of $\L$, to obtain that $a\in X_\un{n}^{-1}(\L_{[d]})$.
If $n_i=0$ (and so $\un{n}=\un{0}$), then there is nothing to show since $a\in\L_{[d]}$ by assumption.
In total, we have that $a\in\L_{\inv,F}$.

Next, since $a\in X_\un{i}^{-1}(\L_F)$ and $X$ is proper, an application of (\ref{Eq: comp}) yields that $\phi_\un{i}(a)\in\K(X_\un{i}\L_F)$.
Proposition \ref{P:lim perp} then gives that $a\in\L_{\lim,F}$.
Combining the preceding deductions, we ascertain that $a\in\L_F$, establishing the base case.

Now fix $1\leq N\leq d-2$ and suppose we have proved that (\ref{Eq:Tfam}) holds for all $\mt\neq F\subsetneq[d]$ satisfying $|F|=d-n$, for all $1\leq n\leq N$.
Fix $\mt\neq F\subsetneq[d]$ such that $|F|=d-(N+1)$ and $i\in[d]\setminus F$.
We must show that
\[
\L_F=X_\un{i}^{-1}(\L_F)\cap\L_{F\cup\{i\}}.
\]
The forward inclusion is immediate since $\L$ is invariant and partially ordered.
For the reverse inclusion, take $a\in X_\un{i}^{-1}(\L_F)\cap\L_{F\cup\{i\}}$.
As in the base case, an application of Proposition \ref{P:NTcom} ensures that it suffices to show that
\[
a\in\bigg(\bigcap_{\un{n}\perp F}X_\un{n}^{-1}(J_F(\L_\mt,X))\bigg)\cap\L_{\inv,F}\cap\L_{\lim,F}.
\]
Accordingly, fix $\un{n}=(n_1,\dots,n_d)\perp F$.
If $n_i>0$, then we argue as in (\ref{Eq:tenstrick}) to obtain that $a\in X_\un{n}^{-1}(J_F(\L_\mt,X))$.
Now suppose that $n_i=0$, so that $\un{n}\perp F\cup\{i\}$.
Applying invariance of $\L$ in tandem with the fact that $a\in\L_{F\cup\{i\}}$, we obtain that
\begin{equation*}
\sca{X_\un{n},aX_\un{n}}\subseteq\L_{F\cup\{i\}}\subseteq J_{F\cup\{i\}}(\L_\mt,X),
\end{equation*}
using item (i) of Definition \ref{D:NT tuple} in the final inclusion.
Note also that
\begin{align}\label{Eq:commtrick}
\sca{X_\un{i},\sca{X_\un{n},aX_\un{n}}X_\un{i}} & \subseteq\sca{X_\un{n}\otimes_A X_\un{i}, a(X_\un{n}\otimes_A X_\un{i})}\subseteq\sca{X_{\un{n}+\un{i}},aX_{\un{n}+\un{i}}} \\
											   & =\sca{X_{\un{i}+\un{n}},aX_{\un{i}+\un{n}}}=\sca{X_\un{i}\otimes_A X_\un{n},a(X_\un{i}\otimes_A X_\un{n})}\nonumber \\ 
											   & \subseteq[\sca{X_\un{n},\sca{X_\un{i},aX_\un{i}}X_\un{n}}]\subseteq\L_F\subseteq J_F(\L_\mt,X). \nonumber
\end{align}
Combining the previous two deductions, we have that 
\[
\sca{X_\un{n},aX_\un{n}}\subseteq X_\un{i}^{-1}(J_F(\L_\mt,X))\cap J_{F\cup\{i\}}(\L_\mt,X).
\]
Applying $[ \hspace{1pt} \cdot \hspace{1pt} ]_{\L_\mt}$ and invoking Proposition \ref{P:quo ideal}, we obtain that
\[
\sca{[X_\un{n}]_{\L_\mt}, [a]_{\L_\mt}[X_\un{n}]_{\L_\mt}}\subseteq[X_\un{i}]_{\L_\mt}^{-1}(\J_F([X]_{\L_\mt}))\cap\J_{F\cup\{i\}}([X]_{\L_\mt}).
\]
It now follows by Proposition \ref{P:Jpassdown} that
\[
[\sca{X_\un{n}, aX_\un{n}}]_{\L_\mt}=\sca{[X_\un{n}]_{\L_\mt}, [a]_{\L_\mt}[X_\un{n}]_{\L_\mt}}\subseteq\J_F([X]_{\L_\mt})
\]
and hence $a\in X_\un{n}^{-1}(J_F(\L_\mt,X))$ via another application of Proposition \ref{P:quo ideal}.
In total, we deduce that
\[
a\in\bigcap_{\un{n}\perp F}X_\un{n}^{-1}(J_F(\L_\mt,X)).
\]
To see that $a\in\L_{\inv,F}\equiv\bigcap_{\un{n}\perp F}X_\un{n}^{-1}(\cap_{F\subsetneq D}\L_D)$, fix $\un{n}=(n_1,\dots,n_d)\perp F$.
If $n_i>0$, then arguing as in (\ref{Eq:tenstrick}) gives that
\[
\sca{X_\un{n},aX_\un{n}}\subseteq\L_F\subseteq\cap_{F\subsetneq D}\L_D,
\]
where the final inclusion follows from the partial ordering of $\L$.
If $n_i=0$, then $\un{n}\perp F\cup\{i\}$ and so $\sca{X_\un{n},aX_\un{n}}\subseteq\L_{F\cup\{i\}}$ by invariance of $\L$.
Fix $D\supsetneq F$ and suppose that $i\in D$.
Then $F\cup\{i\}\subseteq D$ and so $\sca{X_\un{n},aX_\un{n}}\subseteq\L_D$ by the partial ordering of $\L$.
Now suppose that $i\not\in D$.
Observe that $|F|<|D|$ and so $|D|=d-n$ for some $1\leq n\leq N$.
By the inductive hypothesis, we have that
\[
\L_D=X_\un{i}^{-1}(\L_D)\cap\L_{D\cup\{i\}}.
\]
Note that $\sca{X_\un{n},aX_\un{n}}\subseteq\L_{D\cup\{i\}}$ by the partial ordering of $\L$.
By arguing as in (\ref{Eq:commtrick}) until the final inclusion, at which point we use that $\L_F\subseteq\L_D$ by the partial ordering of $\L$, we deduce that $\sca{X_\un{n},aX_\un{n}}\subseteq X_\un{i}^{-1}(\L_D)$.
Combining the preceding deductions, we obtain that $\sca{X_\un{n},aX_\un{n}}\subseteq\L_D$.
Since our choice of $D\supsetneq F$ was arbitrary, we ascertain that $\sca{X_\un{n},aX_\un{n}}\subseteq\cap_{F\subsetneq D}\L_D$ in all cases.
In total, we have that $a\in\L_{\inv,F}$.

Finally, since $a\in X_\un{i}^{-1}(\L_F)$ and $X$ is proper, we apply (\ref{Eq: comp}) to obtain that $\phi_\un{i}(a)\in\K(X_\un{i}\L_F)$.
Proposition \ref{P:lim perp} then gives that $a\in\L_{\lim,F}$.
Combining the preceding deductions, we ascertain that $a\in\L_F$ and we conclude that
\[
\L_F=X_\un{i}^{-1}(\L_F)\cap\L_{F\cup\{i\}}.
\]
By strong downward induction, the proof is complete.
\end{proof}

\subsection{T-families are NT-$2^d$-tuples}\label{Ss:TimpliesNT}

Proving that all T-families are NT-$2^d$-tuples requires more work.
The strategy is as follows: given a T-family $\L$, we must show that it satisfies conditions (i)-(iii) of Definition \ref{D:NT tuple}, as well as the simplified version of condition (iv) prescribed by Proposition \ref{P:NTcom}.
We start by showing that $\L$ satisfies conditions (i)-(iii), but do so out of order since we will need conditions (ii) and (iii) to obtain condition (i).

\begin{proposition}\label{P:Tfaminv}
Let $X$ be a proper product system over $\bZ_+^d$ with coefficients in a C*-algebra $A$ and let $\L$ be a T-family of $X$.
Then $\L$ is $X$-invariant.
\end{proposition}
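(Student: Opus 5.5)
We must show $[\sca{X_\un{n},\L_F X_\un{n}}]\subseteq\L_F$ for all $F\subseteq[d]$ and $\un{n}\perp F$. Since $\L_F$ is an ideal, this is the inclusion $\L_F\subseteq X_\un{n}^{-1}(\L_F)$. The plan is to establish it first for the generators $\un{n}=\un{i}$ with $i\notin F$, and then extend to general $\un{n}\perp F$ by a factorisation argument.

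\emph{Generators.} Fix $F\subsetneq[d]$ and $i\in[d]\setminus F$. By the T-family relation (\ref{Eq:Tfam}),
\[
\L_F=X_\un{i}^{-1}(\L_F)\cap\L_{F\cup\{i\}}\subseteq X_\un{i}^{-1}(\L_F),
\]
so $\sca{X_\un{i},\L_FX_\un{i}}\subseteq\L_F$ immediately. When $F=[d]$ the only $\un{n}\perp F$ is $\un{0}$, and $A^{-1}(\L_{[d]})=\L_{[d]}$ trivially, so there is nothing to prove.

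\emph{General $\un{n}$.} We proceed by induction on $|\un{n}|$ for $\un{n}\perp F$. The case $\un{n}=\un{0}$ is trivial. If $\un{n}\neq\un{0}$, pick $i\in\supp\un{n}$; then $i\notin F$, and we write $\un{n}=\un{i}+\un{m}$ with $\un{m}\perp F$ and $|\un{m}|<|\un{n}|$. Using the unitary $u_{\un{i},\un{m}}$, and arguing exactly as in (\ref{Eq:tenstrick}) via Lemma~\ref{L:lance}, we obtain
\[
\sca{X_\un{n},aX_\un{n}}\subseteq[\sca{X_\un{m},\sca{X_\un{i},aX_\un{i}}X_\un{m}}].
\]
The concrete computation is $\sca{\xi_\un{i}\xi_\un{m},a\eta_\un{i}\eta_\un{m}}=\sca{\xi_\un{m},\sca{\xi_\un{i},a\eta_\un{i}}\eta_\un{m}}$ on simple tensors, extended by density and continuity. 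For $a\in\L_F$, the generator step gives $\sca{X_\un{i},aX_\un{i}}\subseteq\L_F$. The inductive hypothesis applied to $\un{m}$ then gives $\sca{X_\un{m},\L_FX_\un{m}}\subseteq\L_F$, and since $\L_F$ is a closed ideal the closed span also lies in $\L_F$. Hence $a\in X_\un{n}^{-1}(\L_F)$.

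The only point requiring care is this tensor-product inner-product inclusion. The reduction to the generators is immediate from (\ref{Eq:Tfam}), and the rest is routine bookkeeping using closedness of the ideals $\L_F$.
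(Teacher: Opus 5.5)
Your proof is correct and follows essentially the same route as the paper. You get the generator case directly from the T-family relation and then induct on $|\un{n}|$ by factorising through a generator using the tensor inner-product inclusion. The only cosmetic difference is that you split $\un{n}=\un{i}+\un{m}$ and so apply the generator step before the inductive hypothesis, whereas the paper splits $\un{n}=\un{m}+\un{i}$ and applies them in the reverse order.
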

\begin{proof}
Fix $F\subseteq[d]$ and $\un{n}\perp F$.
Since $\L$ consists of ideals, it suffices to show that
\[
\sca{X_\un{n},\L_FX_\un{n}}\subseteq\L_F.
\]
Without loss of generality, we may assume that $F\neq[d]$ and $\un{n}\neq\un{0}$. 
We proceed by induction on $|\un{n}|$.
First suppose that $|\un{n}|=1$, so that $\un{n}=\un{i}$ for some $i\in[d]\setminus F$.
Since $\L$ is a T-family, we have that $\L_F\subseteq X_\un{i}^{-1}(\L_F)$ and hence $\sca{X_\un{i},\L_FX_\un{i}}\subseteq\L_F$, as required.
Now suppose that we have proved the claim for all $\un{n}\perp F$ satisfying $|\un{n}|=N$ for some $N\in\bN$.
Fix $\un{n}\perp F$ such that $|\un{n}|=N+1$.
Then we may write $\un{n}=\un{m}+\un{i}$ for some $\un{m}\perp F$ satisfying $|\un{m}|=N$ and some $i\in[d]\setminus F$.
We obtain that
\[
\sca{X_\un{n},\L_FX_\un{n}}=\sca{X_\un{m}\otimes_A X_\un{i},\L_F(X_\un{m}\otimes_A X_\un{i})}\subseteq[\sca{X_\un{i},\sca{X_\un{m},\L_FX_\un{m}}X_\un{i}}]\subseteq\L_F,
\]
where we appeal to the inductive hypothesis in tandem with the base case in the final inclusion.
Thus, by induction, we conclude that $\L$ is invariant.
\end{proof}

\begin{proposition}\label{P:Tfampo}
Let $X$ be a proper product system over $\bZ_+^d$ with coefficients in a C*-algebra $A$ and let $\L$ be a T-family of $X$.
Then $\L$ is partially ordered.
\end{proposition}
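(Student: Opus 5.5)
The statement follows almost directly from the defining relation (\ref{Eq:Tfam}). Fix a T-family $\L$ and subsets $F_1\subseteq F_2\subseteq[d]$. We must show $\L_{F_1}\subseteq\L_{F_2}$.

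The first step is the case where $F_2$ differs from $F_1$ by a single element. Let $F\subsetneq[d]$ and $i\in[d]\setminus F$. Then (\ref{Eq:Tfam}) reads
\[
\L_F=X_\un{i}^{-1}(\L_F)\cap\L_{F\cup\{i\}},
\]
so in particular $\L_F\subseteq\L_{F\cup\{i\}}$. This single inclusion uses only the ``$\subseteq$'' half of the defining equality. It needs neither properness nor the invariance of Proposition \ref{P:Tfaminv}.

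For the general case, list $F_2\setminus F_1=\{i_1,\dots,i_k\}$. If $k=0$ there is nothing to prove. Otherwise set $D_0:=F_1$ and $D_j:=F_1\cup\{i_1,\dots,i_j\}$ for $1\leq j\leq k$, so that $D_k=F_2$. For each $0\leq j<k$ we have $D_j\subsetneq[d]$, since $i_{j+1}\notin D_j$. Also $i_{j+1}\in[d]\setminus D_j$ and $D_{j+1}=D_j\cup\{i_{j+1}\}$. The one-step case therefore applies at each stage and gives $\L_{D_j}\subseteq\L_{D_{j+1}}$. Chaining these inclusions, formally by induction on $k=|F_2\setminus F_1|$, yields $\L_{F_1}\subseteq\L_{F_2}$.

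I do not expect any real obstacle here. The only point needing care is checking that the hypothesis $F\subsetneq[d]$ with $i\notin F$ holds at every stage of the chain, and this is guaranteed because each added index lies outside the current set.
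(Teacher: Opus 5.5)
Your proof is correct and is essentially the paper's argument. Both read off $\L_F\subseteq\L_{F\cup\{i\}}$ from the defining relation of a T-family and chain these one-element enlargements from the smaller set up to the larger one; the paper relabels so that $F=[k]$ and $D=[\ell]$, which amounts to your enumeration of $F_2\setminus F_1$.
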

\begin{proof}
Fixing $F\subseteq D\subseteq[d]$, we must show that $\L_F\subseteq\L_D$.
This is immediate when $F=D$, so assume that $F\subsetneq D$.
By relabelling elements if necessary, we may assume that $F=[k]$ and $D=[\ell]$ for some $0\leq k<\ell\leq d$ (with the convention that if $k=0$ then $F=\mt$).
Since $k+1\not\in F$ and $\L$ is a T-family, we have that $\L_F\subseteq\L_{F\cup\{k+1\}}$.
Likewise, since $k+2\not\in F\cup\{k+1\}$ and $\L$ is a T-family, we have that $\L_{F\cup\{k+1\}}\subseteq\L_{F\cup\{k+1,k+2\}}$.
Arguing iteratively in this way until $D\setminus F$ has been exhausted, we obtain the sequence of inclusions
\[
\L_F\subseteq\L_{F\cup\{k+1\}}\subseteq\L_{F\cup\{k+1,k+2\}}\subseteq\dots\subseteq\L_{F\cup(D\setminus F)}\equiv\L_D.
\]
Thus $\L_F\subseteq\L_D$ and we conclude that $\L$ is partially ordered.
\end{proof}

\begin{proposition}\label{P:Tfamcond1}
Let $X$ be a proper product system over $\bZ_+^d$ with coefficients in a C*-algebra $A$ and let $\L$ be a T-family of $X$.
Then $\L_F\subseteq J_F(\L_\mt,X)$ for all $\mt\neq F\subseteq[d]$.
\end{proposition}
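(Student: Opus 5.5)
We will use the simplification of $J_F(I,X)$ valid for proper $X$, recorded just before Definition \ref{D:T+O}: $J_F(I,X)=\{a\in A\mid aX_F^{-1}(I)\subseteq I\}$. Fix $\mt\neq F\subseteq[d]$, $a\in\L_F$ and $b\in X_F^{-1}(\L_\mt)$, and set $c:=ab$. It suffices to show that $c\in\L_\mt$.

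The first step is to record two memberships of $c$. Since $\L_F$ is an ideal, we have $c\in\L_F$. Since $X_F^{-1}(\L_\mt)$ is an ideal of $A$ (as noted after Definition \ref{D:JF}), we also have $c\in X_F^{-1}(\L_\mt)$. In particular, taking $\un{n}=\un{i}\leq\un{1}_F$ for each $i\in F$, we get $c\in X_\un{i}^{-1}(\L_\mt)$, i.e.\ $\sca{X_\un{i},cX_\un{i}}\subseteq\L_\mt$ for all $i\in F$.

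The main step is to prove, by downward induction on $|D|$, that $c\in\L_D$ for every $D\subseteq F$. The case $D=F$ is the first step. Now let $D\subsetneq F$ and suppose the claim holds for all strictly larger subsets of $F$. Choose $i\in F\setminus D$. The T-family relation (\ref{Eq:Tfam}) gives
\[
\L_D=X_\un{i}^{-1}(\L_D)\cap\L_{D\cup\{i\}}.
\]
We have $c\in\L_{D\cup\{i\}}$ by the inductive hypothesis, since $D\cup\{i\}\subseteq F$. Moreover $\L_\mt\subseteq\L_D$ by Proposition \ref{P:Tfampo}, and hence
\[
c\in X_\un{i}^{-1}(\L_\mt)\subseteq X_\un{i}^{-1}(\L_D)
\]
by monotonicity of $X_\un{i}^{-1}(\cdot)$. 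Therefore $c\in\L_D$. Taking $D=\mt$ yields $ab\in\L_\mt$, as required.

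The only potentially delicate point is the ideal property of $X_F^{-1}(\L_\mt)$, which is what lets us pass from $b$ to the product $ab$. This was already asserted in the paper. If one prefers a direct check, it follows from the fact that $\sca{\xi,ab\eta}=\sca{a^*\xi,b\eta}$ together with $b\eta\in X_\un{n}\L_\mt$. The latter holds because $\sca{b\eta,b\eta}=\sca{\eta,b^*b\eta}\in\L_\mt$. Beyond this, the argument uses only the T-family relation and the partial ordering from Proposition \ref{P:Tfampo}, so I do not anticipate any real obstacle.
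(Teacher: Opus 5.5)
Your proof is correct and follows essentially the same route as the paper. Both arguments fix $a\in\L_F$ and $b\in X_F^{-1}(\L_\mt)$, then remove one coordinate of $F$ at a time: they combine $ab\in X_{\un{i}}^{-1}(\L_\mt)\subseteq X_{\un{i}}^{-1}(\L_D)$ (using the partial ordering from Proposition \ref{P:Tfampo}) with the T-family relation to pass from $\L_{D\cup\{i\}}$ down to $\L_D$, until reaching $\L_\mt$. The only difference is cosmetic: you use just the trivial inclusion $X_F^{-1}(\L_\mt)\subseteq X_{\un{i}}^{-1}(\L_\mt)$ for $i\in F$, so you avoid citing positive invariance of $\L_\mt$ and the equality $X_F^{-1}(\L_\mt)=\bigcap_{i\in F}X_{\un{i}}^{-1}(\L_\mt)$, which the paper in fact only uses in that same trivial direction.
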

\begin{proof}
Fix $\mt\neq F\subseteq[d]$ and $a\in\L_F$.
It suffices to show that $aX_F^{-1}(\L_\mt)\subseteq\L_\mt$ by properness of $X$.
Note that $\L_\mt$ is positively invariant by Proposition \ref{P:Tfaminv} and thus $X_F^{-1}(\L_\mt)=\bigcap_{i\in F}X_\un{i}^{-1}(\L_\mt)$ by \cite[Lemma 4.1.2]{DeK24}.
Fix $b\in X_F^{-1}(\L_\mt)$.
By relabelling elements if necessary, we may assume that $F=[k]$ for some $1\leq k\leq d$.
We start by setting 
\[
F_1:=F\setminus\{k\}.
\]
Since $a\in\L_F$ and $\L_F$ is an ideal, we have that $ab\in\L_F\equiv\L_{F_1\cup\{k\}}$.
Additionally, we have that $b\in X_F^{-1}(\L_\mt)\subseteq X_\un{k}^{-1}(\L_\mt)$ and hence
\[
ab\in X_\un{k}^{-1}(\L_\mt)\subseteq X_\un{k}^{-1}(\L_{F_1}),
\]
where the membership follows since $X_\un{k}^{-1}(\L_\mt)$ is an ideal and the inclusion follows by the comments preceding Theorem \ref{T:Kat par} together with the fact that $\L$ is partially ordered by Proposition \ref{P:Tfampo}.
Hence $ab\in X_\un{k}^{-1}(\L_{F_1})\cap\L_{F_1\cup\{k\}}$ and so $ab\in\L_{F_1}$ since $\L$ is a T-family.
Next we set 
\[
F_2:=F_1\setminus\{k-1\}=F\setminus\{k-1,k\}.
\]
We have that $ab\in\L_{F_1}\equiv\L_{F_2\cup\{k-1\}}$ and $b\in X_F^{-1}(\L_\mt)\subseteq X_\un{k-1}^{-1}(\L_\mt)$, so that
\[
ab\in X_\un{k-1}^{-1}(\L_\mt)\subseteq X_\un{k-1}^{-1}(\L_{F_2}),
\]
arguing as in the previous step.
Hence $ab\in X_\un{k-1}^{-1}(\L_{F_2})\cap\L_{F_2\cup\{k-1\}}$ and so $ab\in\L_{F_2}$ since $\L$ is a T-family.
We iterate the preceding argument until all elements of $F$ have been exhausted, yielding that $ab\in\L_\mt$.
Thus $aX_F^{-1}(\L_\mt)\subseteq\L_\mt$, finishing the proof.
\end{proof}

Ascertaining whether or not T-families satisfy the condition stated in Proposition \ref{P:NTcom} (via a direct argument) was an open question in the author's PhD thesis \cite{De24}.
Therein, an affirmative answer was only obtained in the setting of row-finite $k$-graphs \cite[Remark 6.2.8]{De24}.
Here we present a fully general affirmative answer, though it will require some set-up.
We begin with a lemma that holds on the level of general strong compactly aligned product systems.

\begin{lemma}\label{L:insideinv}
Let $X$ be a strong compactly aligned product system with coefficients in a C*-algebra $A$ and let $\L$ be a $2^d$-tuple of $X$ that is invariant, partially ordered and consists of ideals.
Fixing $\mt\neq F\subsetneq[d]$, we have that $\L_F\subseteq\L_{\inv,F}$ and thus $[\L_{\inv,F}]_{\L_F}$ is an ideal of $[A]_{\L_F}$.
\end{lemma}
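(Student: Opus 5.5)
The plan is to verify directly from the definition $\L_{\inv,F}=\bigcap_{\un{n}\perp F}X_\un{n}^{-1}(\cap_{F\subsetneq D}\L_D)$ that every $a\in\L_F$ lies in each $X_\un{n}^{-1}(\L_D)$, for $\un{n}\perp F$ and $D\supsetneq F$. Two inputs are needed: invariance of $\L$ and the partial ordering of $\L$.

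Fix $a\in\L_F$ and $\un{n}\perp F$. By $X$-invariant (Definition \ref{D:inv po}(i)), and since $\L_F$ is an ideal so that no closed span is needed, we have $\sca{X_\un{n},aX_\un{n}}\subseteq\L_F$. By partial ordering, $\L_F\subseteq\L_D$ for every $D\supsetneq F$. Hence $\sca{X_\un{n},aX_\un{n}}\subseteq\cap_{F\subsetneq D}\L_D$, which says exactly that $a\in X_\un{n}^{-1}(\cap_{F\subsetneq D}\L_D)$.

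The case $\un{n}=\un{0}$ is consistent with this. There $X_\un{0}=A$ and $A^{-1}(I)=I$, so the condition reads $a\in\cap_{F\subsetneq D}\L_D$, which again follows from partial ordering. Intersecting over all $\un{n}\perp F$ gives $\L_F\subseteq\L_{\inv,F}$.

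For the second assertion, recall that $\L_{\inv,F}$ is an ideal of $A$ (this is \cite[Proposition 3.4.2]{DeK24}). It can also be seen directly: $\cap_{F\subsetneq D}\L_D$ is an ideal and each $X_\un{n}^{-1}(I)$ is an ideal. Since $\L_F\subseteq\L_{\inv,F}$, the image of $\L_{\inv,F}$ under the quotient map $A\to A/\L_F$ is an ideal of $[A]_{\L_F}$; its preimage is exactly $\L_{\inv,F}$. No step here is a real obstacle.
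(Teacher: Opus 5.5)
Your proof is correct and follows the paper's argument essentially verbatim: invariance gives $\sca{X_\un{n},\L_F X_\un{n}}\subseteq\L_F$ for each $\un{n}\perp F$, partial ordering gives $\L_F\subseteq\cap_{F\subsetneq D}\L_D$, and the second claim follows because $\L_F\subseteq\L_{\inv,F}$ are both ideals of $A$. Your separate treatment of $\un{n}=\un{0}$ and your direct check that $\L_{\inv,F}$ is an ideal are harmless additions the paper leaves implicit.
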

\begin{proof}
Showing that $\L_F\subseteq\L_{\inv,F}\equiv\bigcap_{\un{n}\perp F}X_\un{n}^{-1}(\cap_{F\subsetneq D}\L_D)$ amounts to proving that
\[
\sca{X_\un{n},\L_FX_\un{n}}\subseteq\cap_{F\subsetneq D}\L_D\foral\un{n}\perp F.
\]
To this end, fix $\un{n}\perp F$.
By invariance of $\L$, we have that $\sca{X_\un{n},\L_FX_\un{n}}\subseteq\L_F$.
It follows from the partial ordering of $\L$ that $\L_F\subseteq\cap_{F\subsetneq D}\L_D$.
Combining these deductions, we obtain that $\L_F\subseteq\L_{\inv,F}$, as required.
Since both $\L_F$ and $\L_{\inv,F}$ are ideals of $A$, the second claim follows immediately and the proof is complete.
\end{proof}

Let $X$ be strong compactly aligned and let $\L$ be a $2^d$-tuple of $X$ that is invariant and consists of ideals. 
Fixing $F\subseteq[d]$ and $\un{n}\perp F$, the invariance condition implies that $\L_F$ is positively invariant for $X_\un{n}$.
Hence, appealing to the quotient construction outlined in Subsection \ref{Ss:C*-cor}, we deduce that $[X_\un{n}]_{\L_F}$ is a C*-correspondence over $[A]_{\L_F}$ with left action
\[
[\phi_\un{n}]_{\L_F}\colon[A]_{\L_F}\to\L([X_\un{n}]_{\L_F}); [a]_{\L_F}\mapsto[\phi_\un{n}(a)]_{\L_F}\foral a\in A.
\]
Note that we can replace ``$\L([X_\un{n}]_{\L_F})$" by ``$\K([X_\un{n}]_{\L_F})$" when $X$ is proper by Lemma \ref{L:Kat07}.
We emphasise that we can only guarantee this C*-correspondence structure on $[X_\un{n}]_{\L_F}$ when $\un{n}\perp F$.

\begin{proposition}\label{P:restinj}
Let $X$ be a proper product system over $\bZ_+^d$ with coefficients in a C*-algebra $A$ and let $\L$ be a T-family of $X$.
Then, fixing $\mt\neq F\subsetneq[d]$ and $\un{n}\perp F$, we have that the restriction of $[\phi_\un{n}]_{\L_F}$ to $[\L_{\inv,F}]_{\L_F}$ is injective and therefore isometric.
\end{proposition}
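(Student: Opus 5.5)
The plan is to identify the kernel of $[\phi_\un{n}]_{\L_F}$ explicitly and reduce injectivity to an inclusion of ideals. For $a\in A$, we have $[\phi_\un{n}(a)]_{\L_F}=0$ exactly when $\phi_\un{n}(a)X_\un{n}\subseteq X_\un{n}\L_F$. This is equivalent to $\sca{X_\un{n},aX_\un{n}}\subseteq\L_F$, i.e.\ $a\in X_\un{n}^{-1}(\L_F)$. Here I use that $X_\un{n}\L_F=\{\xi\in X_\un{n}\mid \sca{\xi,\xi}\in\L_F\}$ and polarisation; alternatively, since $X$ is proper, one can use (\ref{Eq: comp}) with $k=\phi_\un{n}(a)$. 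Hence the kernel of $[\phi_\un{n}]_{\L_F}$ is $[X_\un{n}^{-1}(\L_F)]_{\L_F}$. Using Lemma \ref{L:insideinv} (which applies by Propositions \ref{P:Tfaminv} and \ref{P:Tfampo}), injectivity on $[\L_{\inv,F}]_{\L_F}$ then amounts to
\[
X_\un{n}^{-1}(\L_F)\cap\L_{\inv,F}\subseteq\L_F \foral \un{n}\perp F.
\]
Isometry is automatic once injectivity is known, since an injective $*$-homomorphism between C*-algebras is isometric. The restriction to the ideal $[\L_{\inv,F}]_{\L_F}$ is still a $*$-homomorphism.

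I will prove the displayed inclusion by induction on $|\un{n}|$. The case $\un{n}=\un{0}$ is trivial because $A^{-1}(\L_F)=\L_F$. For $|\un{n}|\geq1$, write $\un{n}=\un{m}+\un{i}$ with $i\in[d]\setminus F$ and $\un{m}\perp F$, so that $|\un{m}|=|\un{n}|-1$. Fix $a\in X_\un{n}^{-1}(\L_F)\cap\L_{\inv,F}$ and $\xi_\un{m},\xi_\un{m}'\in X_\un{m}$, and set $b:=\sca{\xi_\un{m},a\xi'_\un{m}}$. The key computation is that, for $\eta_\un{i},\eta'_\un{i}\in X_\un{i}$,
\[
\sca{\eta_\un{i},b\eta'_\un{i}}=\sca{\xi_\un{m}\eta_\un{i},a\,\xi'_\un{m}\eta'_\un{i}}\in\sca{X_\un{n},aX_\un{n}}\subseteq\L_F.
\]
Thus $b\in X_\un{i}^{-1}(\L_F)$. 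Moreover, $a\in\L_{\inv,F}$ and $\un{m}\perp F$ give $b\in\bigcap_{F\subsetneq D}\L_D\subseteq\L_{F\cup\{i\}}$. The T-family relation (\ref{Eq:Tfam}) then gives $b\in\L_F$.

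Since $\L_F$ is a closed ideal, I get $\sca{X_\un{m},aX_\un{m}}\subseteq\L_F$, i.e.\ $a\in X_\un{m}^{-1}(\L_F)\cap\L_{\inv,F}$. The inductive hypothesis then yields $a\in\L_F$, which completes the induction.

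The only delicate step is the identity $\sca{\eta_\un{i},\sca{\xi_\un{m},a\xi'_\un{m}}\eta'_\un{i}}=\sca{\xi_\un{m}\eta_\un{i},a\xi'_\un{m}\eta'_\un{i}}$. This is the standard inner product formula on $X_\un{m}\otimes_AX_\un{i}$ transported through the unitary $u_{\un{m},\un{i}}$, together with the fact that the left action on $X_\un{n}$ is compatible with that on $X_\un{m}$. I will also need the kernel identification above, in particular that $\phi_\un{n}(a)X_\un{n}\subseteq X_\un{n}\L_F$ if and only if $\sca{X_\un{n},aX_\un{n}}\subseteq\L_F$. This follows from \cite[Lemma 1.5]{Kat07}-type facts already quoted via Lemma \ref{L:Kat07}.
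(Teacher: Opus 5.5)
Your proposal is correct and follows the paper's proof essentially step for step. You identify the kernel of $[\phi_\un{n}]_{\L_F}$ with $[X_\un{n}^{-1}(\L_F)]_{\L_F}$, induct on $|\un{n}|$ by splitting off a generator $\un{i}$ with $i\notin F$, and apply the T-family relation to get $\sca{X_\un{m},aX_\un{m}}\subseteq X_\un{i}^{-1}(\L_F)\cap\L_{F\cup\{i\}}=\L_F$. The only difference is cosmetic: you absorb the paper's separate $|\un{n}|=1$ case into the inductive step with $\un{m}=\un{0}$. That is legitimate, because you only use the inclusion $\sca{X_\un{i},\sca{X_\un{m},aX_\un{m}}X_\un{i}}\subseteq\sca{X_\un{n},aX_\un{n}}$ and not surjectivity of $u_{\un{0},\un{i}}$.
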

\begin{proof}
Since $\L$ is invariant by Proposition \ref{P:Tfaminv}, we can make sense of $[\phi_\un{n}]_{\L_F}$ by the comments preceding the statement.
In addition, as $\L$ is partially ordered by Proposition \ref{P:Tfampo}, we can make sense of the ideal $[\L_{\inv,F}]_{\L_F}$ of $[A]_{\L_F}$ by Lemma \ref{L:insideinv}.
Since $[\L_{\inv,F}]_{\L_F}$ is an ideal and thus in particular a C*-subalgebra of $[A]_{\L_F}$, the restriction of $[\phi_\un{n}]_{\L_F}$ to $[\L_{\inv,F}]_{\L_F}$ is a $*$-homomorphism between C*-algebras.
This justifies the final assertion of the statement, as any injective $*$-homo\-morphism between C*-algebras is automatically isometric.

We proceed now to the proof.
We begin by providing a characterisation of membership to $\ker[\phi_\un{n}]_{\L_F}$ which will be useful to us going forward.
More precisely, fixing $a\in A$, we have that
\begin{align}\label{Eq:kerchar}
[a]_{\L_F}\in\ker[\phi_\un{n}]_{\L_F} & \iff [\phi_\un{n}(a)\xi_\un{n}]_{\L_F}=0\foral \xi_\un{n}\in X_\un{n}\iff aX_\un{n}\subseteq X_\un{n}\L_F \\
					       & \iff \sca{X_\un{n},aX_\un{n}}\subseteq\L_F\iff a\in X_\un{n}^{-1}(\L_F), \nonumber
\end{align}
where the third equivalence follows by \cite[Proposition 1.3]{Kat07}.

We will prove that the restriction of $[\phi_\un{n}]_{\L_F}$ to $[\L_{\inv,F}]_{\L_F}$ is injective by induction on $|\un{n}|$.
First suppose that $|\un{n}|=0$ and therefore $\un{n}=\un{0}$.
Taking $[a]_{\L_F}\in\ker[\phi_\un{0}]_{\L_F}\cap[\L_{\inv,F}]_{\L_F}$, an application of (\ref{Eq:kerchar}) yields that $a\in A^{-1}(\L_F)=\L_F$ and hence $[a]_{\L_F}=0$.
Thus $[\phi_\un{0}]_{\L_F}$ is injective on $[\L_{\inv,F}]_{\L_F}$, as required. 

Now suppose that $|\un{n}|=1$.
Since $\un{n}\perp F$, it follows that $\un{n}=\un{i}$ for some $i\in[d]\setminus F$.
Taking $[a]_{\L_F}\in\ker[\phi_\un{i}]_{\L_F}\cap[\L_{\inv,F}]_{\L_F}$, an application of (\ref{Eq:kerchar}) yields that $a\in X_\un{i}^{-1}(\L_F)$.
By assumption we also have that $a\in\L_{\inv,F}\equiv\bigcap_{\un{m}\perp F}X_\un{m}^{-1}(\cap_{F\subsetneq D}\L_D)$, and so in particular 
\[
a\in\cap_{F\subsetneq D}\L_D\subseteq\L_{F\cup\{i\}}.
\]
Combining the preceding deductions, we obtain that
\[
a\in X_\un{i}^{-1}(\L_F)\cap\L_{F\cup\{i\}}=\L_F,
\]
where the equality follows from the fact that $\L$ is a T-family.
Thus $[a]_{\L_F}=0$ and so $[\phi_\un{i}]_{\L_F}$ is injective on $[\L_{\inv,F}]_{\L_F}$, as required.

Next, suppose that we have shown that $[\phi_\un{n}]_{\L_F}$ is injective on $[\L_{\inv,F}]_{\L_F}$ for all $\un{n}\perp F$ such that $|\un{n}|=N$ for some fixed $N\in\bN$.
Take $\un{n}\perp F$ such that $|\un{n}|=N+1$.
Then we may write $\un{n}$ in the form $\un{n}=\un{m}+\un{i}$ for some $\un{m}\perp F$ satisfying $|\un{m}|=N$ and some $i\in[d]\setminus F$.
Taking $[a]_{\L_F}\in\ker[\phi_\un{n}]_{\L_F}\cap[\L_{\inv,F}]_{\L_F}$, an application of (\ref{Eq:kerchar}) yields that $a\in X_\un{n}^{-1}(\L_F)$.
Additionally, we have that
\begin{align*}
[\sca{X_\un{n},aX_\un{n}}]=[\sca{X_{\un{m}+\un{i}},aX_{\un{m}+\un{i}}}]=[\sca{X_\un{m}\otimes_A X_\un{i}, a(X_\un{m}\otimes_A X_\un{i})}]=[\sca{X_\un{i},\sca{X_\un{m},aX_\un{m}}X_\un{i}}].
\end{align*}
Combining the previous two deductions, we obtain that
\[
\sca{X_\un{i},\sca{X_\un{m},aX_\un{m}}X_\un{i}}\subseteq\L_F.
\]
By definition, this means that $\sca{X_\un{m},aX_\un{m}}\subseteq X_\un{i}^{-1}(\L_F)$.
By assumption we also have that $a\in\L_{\inv,F}$ and so in particular
\[
\sca{X_\un{m},aX_\un{m}}\subseteq\cap_{F\subsetneq D}\L_D\subseteq\L_{F\cup\{i\}}.
\]
Thus we have that
\[
\sca{X_\un{m},aX_\un{m}}\subseteq X_\un{i}^{-1}(\L_F)\cap\L_{F\cup\{i\}}=\L_F,
\]
where the equality follows from the fact that $\L$ is a T-family.
By definition, this means that $a\in X_\un{m}^{-1}(\L_F)$.
An application of (\ref{Eq:kerchar}), replacing ``$\un{n}$" by ``$\un{m}$", yields that $[a]_{\L_F}\in\ker[\phi_\un{m}]_{\L_F}$.
Since in addition we have that $[a]_{\L_F}\in[\L_{\inv,F}]_{\L_F}$, we may appeal to the inductive hypothesis to deduce that $[a]_{\L_F}=0$. 
Hence $[\phi_\un{n}]_{\L_F}$ is injective on $[\L_{\inv,F}]_{\L_F}$, as required.
By induction, this completes the proof.
\end{proof}

We are now ready to show that all T-families are NT-$2^d$-tuples.

\begin{proposition}\label{P:TimpliesNT}
Let $X$ be a proper product system over $\bZ_+^d$ with coefficients in a C*-algebra $A$.
Then every T-family of $X$ is an NT-$2^d$-tuple of $X$.
\end{proposition}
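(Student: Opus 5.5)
The plan is to verify the four conditions of Definition \ref{D:NT tuple} for a T-family $\L$, using the simplified form of condition (iv) from Proposition \ref{P:NTcom}. Conditions (i)--(iii) are already in hand. $\L$ consists of ideals by definition. It is $X$-invariant by Proposition \ref{P:Tfaminv} and partially ordered by Proposition \ref{P:Tfampo}. Finally, $\L_F\subseteq J_F(\L_\mt,X)$ for all $\mt\neq F\subseteq[d]$ by Proposition \ref{P:Tfamcond1}. So everything reduces to showing, for each fixed $\mt\neq F\subsetneq[d]$, that
\[
\bigg(\bigcap_{\un{n}\perp F}X_\un{n}^{-1}(J_F(\L_\mt,X))\bigg)\cap\L_{\inv,F}\cap\L_{\lim,F}\subseteq\L_F.
\]
I will in fact prove the stronger inclusion $\L_{\inv,F}\cap\L_{\lim,F}\subseteq\L_F$, discarding the first intersectand.

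Fix $a\in\L_{\inv,F}\cap\L_{\lim,F}$ and $\varepsilon>0$. Since $X$ is proper, every ideal lies in $\bigcap_{i\in[d]}\phi_\un{i}^{-1}(\K(X_\un{i}))$. Moreover $\L$ is invariant and consists of ideals, so Proposition \ref{P:lim perp} applies. It yields some $\un{n}\perp F$ and $k_\un{n}\in\K(X_\un{n}\L_F)$ with $\|\phi_\un{n}(a)+k_\un{n}\|<\varepsilon$.

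Because $\un{n}\perp F$ and $\L$ is invariant, the quotient $[X_\un{n}]_{\L_F}$ is a C*-correspondence over $[A]_{\L_F}$ with left action $[\phi_\un{n}]_{\L_F}$. The map $[\hspace{1pt}\cdot\hspace{1pt}]_{\L_F}\colon\L(X_\un{n})\to\L([X_\un{n}]_{\L_F})$ is a $*$-homomorphism, hence contractive. By Lemma \ref{L:Kat07} it annihilates $\K(X_\un{n}\L_F)$, so $[k_\un{n}]_{\L_F}=0$. Therefore
\[
\|[\phi_\un{n}]_{\L_F}([a]_{\L_F})\|=\|[\phi_\un{n}(a)+k_\un{n}]_{\L_F}\|\leq\|\phi_\un{n}(a)+k_\un{n}\|<\varepsilon .
\]

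Since $a\in\L_{\inv,F}$, we have $[a]_{\L_F}\in[\L_{\inv,F}]_{\L_F}$, which is an ideal of $[A]_{\L_F}$ by Lemma \ref{L:insideinv}. Proposition \ref{P:restinj} says that $[\phi_\un{n}]_{\L_F}$ is isometric on this ideal, so $\|[a]_{\L_F}\|<\varepsilon$. This holds for every $\varepsilon>0$, so $[a]_{\L_F}=0$, that is, $a\in\L_F$. Together with (i)--(iii), Proposition \ref{P:NTcom} then shows that $\L$ is an NT-$2^d$-tuple.

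The substantive obstacle is the isometry statement, namely that $[\phi_\un{n}]_{\L_F}$ is injective on $[\L_{\inv,F}]_{\L_F}$ uniformly in $\un{n}\perp F$. This is exactly Proposition \ref{P:restinj}, so it is already settled. What remains is bookkeeping: the isometry must hold for the particular $\un{n}$ supplied by the limit condition, which it does since Proposition \ref{P:restinj} covers every $\un{n}\perp F$. One must also remember to pass from the quotient norm to membership in $\L_F$ via $\|[a]_{\L_F}\|=0$.
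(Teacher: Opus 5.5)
Your proof is correct and follows the paper's argument. The paper likewise gets conditions (i)--(iii) from Propositions \ref{P:Tfamcond1}, \ref{P:Tfaminv} and \ref{P:Tfampo}, and for condition (iv) it also uses only $a\in\L_{\inv,F}\cap\L_{\lim,F}$, combined with the isometry of Proposition \ref{P:restinj}. The one cosmetic difference is that you bound $\|[\phi_\un{n}(a)]_{\L_F}\|$ using an approximant $k_\un{n}\in\K(X_\un{n}\L_F)$ and contractivity of $[\,\cdot\,]_{\L_F}$, whereas the paper gets equality with $\|\phi_\un{n}(a)+\K(X_\un{n}\L_F)\|$ from the isomorphism $\K(X_\un{n})/\K(X_\un{n}\L_F)\cong\K([X_\un{n}]_{\L_F})$; only the inequality is needed, and the approximant follows directly from the quotient norm without Proposition \ref{P:lim perp}.
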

\begin{proof}
Let $\L$ be a T-family of $X$.
First note that $\L$ satisfies conditions (i), (ii) and (iii) of Definition \ref{D:NT tuple} by Propositions \ref{P:Tfamcond1}, \ref{P:Tfaminv} and \ref{P:Tfampo}, respectively.
Thus, appealing to Proposition \ref{P:NTcom}, it suffices to show that
\[
\bigg(\bigcap_{\un{n}\perp F}X_\un{n}^{-1}(J_F(\L_\mt,X))\bigg)\cap\L_{\inv,F}\cap\L_{\lim,F}\subseteq\L_F\foral\mt\neq F\subsetneq[d].
\]
To this end, fix $\mt\neq F\subsetneq[d]$ and an element
\[
a\in\bigg(\bigcap_{\un{n}\perp F}X_\un{n}^{-1}(J_F(\L_\mt,X))\bigg)\cap\L_{\inv,F}\cap\L_{\lim,F}.
\]
Fixing $\varepsilon>0$, the fact that $a\in\L_{\lim,F}$ ensures that there exists $\un{n}\perp F$ such that
\[
\|\phi_\un{n}(a)+\K(X_\un{n}\L_F)\|<\varepsilon.
\]
In turn, we have that
\[
\|[\phi_\un{n}]_{\L_F}([a]_{\L_F})\|=\|[\phi_\un{n}(a)]_{\L_F}\|=\|\phi_\un{n}(a)+\K(X_\un{n}\L_F)\|<\varepsilon,
\]
where the first equality follows from the discussion preceding Proposition \ref{P:restinj}.
The second equality follows via a combination of Lemma \ref{L:Kat07} and the First Isomorphism Theorem for C*-algebras; more precisely, the mapping
\[
\K(X_\un{n})/\K(X_\un{n}\L_F)\to\K([X_\un{n}]_{\L_F}); k_\un{n}+\K(X_\un{n}\L_F)\mapsto[k_\un{n}]_{\L_F}\foral k_\un{n}\in\K(X_\un{n})
\]
is a $*$-isomorphism.
By assumption we also have that $a\in\L_{\inv,F}$ and hence $[a]_{\L_F}\in[\L_{\inv,F}]_{\L_F}$.
An application of Proposition \ref{P:restinj} then gives that
\[
\|[a]_{\L_F}\|=\|[\phi_\un{n}]_{\L_F}([a]_{\L_F})\|<\varepsilon.
\]
Since $\varepsilon>0$ is arbitrary, it follows that $[a]_{\L_F}=0$ and hence $a\in\L_F$, finishing the proof.
\end{proof}

With this, we arrive at the main result of the current work.

\begin{theorem}\label{T:NT=T}
Let $X$ be a proper product system over $\bZ_+^d$ with coefficients in a C*-algebra $A$.
Then the following hold:
\begin{enumerate}
\item the NT-$2^d$-tuples of $X$ are exactly the T-families of $X$, and
\item the NO-$2^d$-tuples of $X$ are exactly the O-families of $X$.
\end{enumerate}
\end{theorem}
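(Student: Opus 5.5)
For item (i), I will simply combine the two directions established in this section. Proposition \ref{P:NTimpliesT} shows that every NT-$2^d$-tuple of $X$ is a T-family of $X$. Proposition \ref{P:TimpliesNT} shows that every T-family of $X$ is an NT-$2^d$-tuple of $X$. Together these say the two sets of $2^d$-tuples coincide. This direct route needs no Nica-covariant representations or uniqueness theorems. As a consistency check, the indirect route of Section \ref{S:BBDeK} gives the same conclusion: Propositions \ref{P:maxNT} and \ref{P:Tfamga} identify both classes with the $2^d$-tuples $\L^{(\pi,t)}$ arising from gauge-action-admitting Nica-covariant representations $(\pi,t)$.

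For item (ii), I will unpack the definitions. By Definition \ref{D:rel NO tuple}, an NO-$2^d$-tuple is an NT-$2^d$-tuple $\L$ with $\I\subseteq\L$. By Definition \ref{D:T+O}, an O-family is a T-family $\L$ with $\I\subseteq\L$. The extra condition $\I\subseteq\L$ is literally the same in both definitions. Here $\I$ is a relative $2^d$-tuple, which is automatic since $X$ is proper, so the relative NO construction makes sense. Hence (ii) follows immediately from (i) by intersecting both sides with the set $\{\L\mid\I\subseteq\L\}$.

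I do not expect any genuine obstacle at this stage, since all the substantive work sits in Propositions \ref{P:NTimpliesT} and \ref{P:TimpliesNT}. The harder of those is the T-family $\Rightarrow$ NT direction, specifically verifying the simplified condition (iv) of Proposition \ref{P:NTcom}. That is handled by the isometry statement of Proposition \ref{P:restinj} combined with the limit condition defining $\L_{\lim,F}$. The only care needed in the final assembly is to note that both propositions are stated for an arbitrary proper $X$ and impose no further hypotheses, so they apply verbatim.
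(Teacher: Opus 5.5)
Your proposal is correct and matches the paper's proof. Item (i) comes from Propositions \ref{P:NTimpliesT} and \ref{P:TimpliesNT}, with the indirect route via Propositions \ref{P:maxNT} and \ref{P:Tfamga} also noted in the paper, and item (ii) follows from (i) because NO-$2^d$-tuples and O-families are cut out by the same extra condition $\I\subseteq\L$.
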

\begin{proof}
Item (i) follows either by a combination of Propositions \ref{P:maxNT} and \ref{P:Tfamga} (the indirect route), or Propositions \ref{P:NTimpliesT} and \ref{P:TimpliesNT} (the direct route).
Item (ii) follows immediately by item (i) and Definitions \ref{D:rel NO tuple} and \ref{D:T+O}.
\end{proof}

\section{Applications}\label{S:app}

\noindent We conclude by applying Theorem \ref{T:NT=T} to give a simplification of Theorem \ref{T:NO param} in the proper case.
We then interpret the parametrising objects in the cases of C*-dynamical systems and row-finite higher-rank graphs.

\subsection{Gauge-invariant ideal structure of $\N\O(\K,X)$}\label{Ss:relCNPparam}

\noindent We begin by reinterpreting $\K$-relative NO-$2^d$-tuples (see Definition \ref{D:rel NO tuple}) in the proper case using the T-family machinery.

\begin{corollary}\label{C:relOfam}
Let $X$ be a proper product system over $\bZ_+^d$ with coefficients in a C*-algebra $A$ and let $\K$ be a $2^d$-tuple of $X$.
Then the $\K$-relative NO-$2^d$-tuples of $X$ are exactly the T-families $\L$ of $X$ that satisfy $\K\subseteq\L$.
\end{corollary}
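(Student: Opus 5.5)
This is essentially immediate from Theorem \ref{T:NT=T} (i) together with Definition \ref{D:rel NO tuple}. The only preliminary point is that Definition \ref{D:rel NO tuple} requires $\K$ to be a relative $2^d$-tuple. In the proper setting every $2^d$-tuple is automatically relative: since $\phi_\un{i}(A)\subseteq\K(X_\un{i})$ for all $i\in[d]$, we have $\bigcap\{\phi_\un{i}^{-1}(\K(X_\un{i}))\mid i\in F\}=A$ for each $\mt\neq F\subseteq[d]$. So the notion of a $\K$-relative NO-$2^d$-tuple makes sense for an arbitrary $2^d$-tuple $\K$ of $X$.

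With that settled, I would argue both inclusions directly.

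First suppose $\L$ is a $\K$-relative NO-$2^d$-tuple. By Definition \ref{D:rel NO tuple}, $\L$ is an NT-$2^d$-tuple and $\K\subseteq\L$. Theorem \ref{T:NT=T} (i) then shows that $\L$ is a T-family containing $\K$.

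Conversely, suppose $\L$ is a T-family with $\K\subseteq\L$. Theorem \ref{T:NT=T} (i) (or directly Proposition \ref{P:TimpliesNT}) shows that $\L$ is an NT-$2^d$-tuple. Since $\K\subseteq\L$, it is a $\K$-relative NO-$2^d$-tuple.

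There is no genuine obstacle here; all the substantive work was done in Section \ref{S:NT=T}. The one step that needs care is the relativity check on $\K$ above, which is exactly where properness is used.
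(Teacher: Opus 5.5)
Your proposal is correct and matches the paper's proof, which likewise deduces the corollary directly from Theorem \ref{T:NT=T} and Definition \ref{D:rel NO tuple}. Your check that every $2^d$-tuple is automatically relative in the proper case matches the paper's own remark in Subsection \ref{Ss:T fam}. One small correction: properness is not used only in that relativity check, since Theorem \ref{T:NT=T} itself depends on it.
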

\begin{proof}
The result follows immediately by Theorem \ref{T:NT=T} and Definition \ref{D:rel NO tuple}.
\end{proof}

Pursuant to Corollary \ref{C:relOfam}, we give the T-families therein their own name.

\begin{definition}\label{D:relOfam}
Let $X$ be a proper product system over $\bZ_+^d$ with coefficients in a C*-algebra $A$ and let $\K$ and $\L$ be $2^d$-tuples of $X$.
We say that $\L$ is a \emph{$\K$-relative O-family (of $X$)} if $\L$ is a T-family of $X$ that satisfies $\K\subseteq\L$.
\end{definition}

We now use Definition \ref{D:relOfam} to recast Theorem \ref{T:NO param} in the proper setting.

\begin{theorem}\label{T:NOparamprop}
Let $X$ be a proper product system over $\bZ_+^d$ with coefficients in a C*-algebra $A$ and let $\K$ be a $2^d$-tuple of $X$.
Then there exists an order-preserving bijection between the set of $\K$-relative O-families of $X$ and the set of gauge-invariant ideals of $\N\O(\K,X)$.
\end{theorem}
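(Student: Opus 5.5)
The plan is to reduce Theorem \ref{T:NOparamprop} to Theorem \ref{T:NO param} by way of Corollary \ref{C:relOfam}. Theorem \ref{T:NO param} asks for a \emph{relative} $2^d$-tuple $\K$. Since $X$ is proper, $\phi_\un{i}^{-1}(\K(X_\un{i}))=A$ for every $i\in[d]$. Hence every $2^d$-tuple of $X$ is automatically relative, as already observed at the start of Subsection \ref{Ss:T fam}. So $\K$ qualifies, and Theorem \ref{T:NO param} gives an order-preserving bijection between the $\K$-relative NO-$2^d$-tuples of $X$ and the gauge-invariant ideals of $\N\O(\K,X)$.

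Next, Corollary \ref{C:relOfam} says that the $\K$-relative NO-$2^d$-tuples of $X$ are exactly the T-families $\L$ of $X$ with $\K\subseteq\L$. By Definition \ref{D:relOfam}, these are precisely the $\K$-relative O-families of $X$. The two index sets are therefore literally the same set of $2^d$-tuples, ordered by the same relation $\subseteq$ in both cases. Transporting the bijection of Theorem \ref{T:NO param} along this set equality yields the required order-preserving bijection.

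There is no genuine obstacle here. The only point needing care is the relativity hypothesis on $\K$ in Theorem \ref{T:NO param}, and properness disposes of it. It also helps to recall, though it is not needed, that $\N\O(\K,X)=\N\O(\sca{\K},X)$. This means one could equally replace $\K$ by $\sca{\K}$ without changing the quotient. The set of $\K$-relative O-families is unchanged by this replacement as well, since every T-family consists of ideals, so $\K\subseteq\L$ if and only if $\sca{\K}\subseteq\L$.
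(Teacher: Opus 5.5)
Your proposal is correct and matches the paper's proof, which likewise obtains the result immediately from Theorem \ref{T:NO param} combined with Corollary \ref{C:relOfam}. Your explicit check that properness makes every $2^d$-tuple relative, so that $\K$ meets the hypothesis of Theorem \ref{T:NO param}, is the same observation the paper makes at the start of Subsection \ref{Ss:T fam}.
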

\begin{proof}
The result follows immediately by Theorem \ref{T:NO param} and Corollary \ref{C:relOfam}.
\end{proof}

The bijection of Theorem \ref{T:NOparamprop} is bolstered to a lattice isomorphism by equipping the set of $\K$-relative O-families with the lattice structure on the set of $\K$-relative NO-$2^d$-tuples outlined in Subsection \ref{Ss:NT} (this is justified by Corollary \ref{C:relOfam}).
Theorem \ref{T:NOparamprop} completely describes the gauge-invariant ideal structure of every equivariant quotient of $\N\T_X$, since every such quotient is canonically $*$-isomorphic to a relative Cuntz-Nica-Pimsner algebra (not necessarily of $X$, but certainly of a quotient of $X$, which is proper by Lemma \ref{L:Kat07}) \cite[Proposition 4.2.1]{DeK24}.
Note that $\N\O_X$ falls within this remit.

The main advantage of Theorem \ref{T:NOparamprop} compared to Theorem \ref{T:NO param} lies in the simpler description of the parametrising objects.
However, this simplification comes at a slight loss of generality in the passage from strong compactly aligned product systems to proper ones.
Nevertheless, Theorem \ref{T:NOparamprop} accounts for an array of important examples, including regular product systems over $\bZ_+^d$, product systems arising from C*-dynamical systems and row-finite higher-rank graphs, and product systems over $\bZ_+^d$ whose fibres (apart from the coefficient algebra) admit finite frames.
The reader is directed to \cite[Chapter 5]{De24} or \cite[Section 5]{DeK24} for further details on these examples.

\subsection{C*-dynamical systems}\label{Ss:c*-dynsys}

\noindent We now seek to interpret the parametrising objects of Theorem \ref{T:NOparamprop} in the setting of C*-dynamical systems.
We present the minimum amount of theory that will be needed; the reader is directed to \cite{DFK17, De24, DeK24, DK18} for further details.
In particular, full proofs of the assertions featuring in this subsection can be found in \cite[Section 5.3]{De24}.

A \emph{C*-dynamical system} $(A,\al,\bZ_+^d)$ consists of a C*-algebra $A$ and a unital semigroup homomorphism $\al\colon\bZ_+^d\to\End(A)$.
Fixing a C*-dynamical system $(A,\al,\bZ_+^d)$, we set
\[
X_{\al,\un{n}}:=[\al_\un{n}(A)A]\foral\un{n}\in\bZ_+^d.
\]
Note that each $X_{\al,\un{n}}$ inherits the usual right Hilbert $A$-module structure from $A$ and can be endowed with the structure of a C*-correspondence over $A$ via the left action
\[
\phi_\un{n}\colon A\to\L(X_{\al,\un{n}}); \phi_\un{n}(a)\xi_\un{n}=\al_\un{n}(a)\xi_\un{n}\foral a\in A \; \text{and} \; \xi_\un{n}\in X_{\al,\un{n}}.
\]
Additionally, we have that each $X_{\al,\un{n}}$ is proper.
The collection $X_\al:=\{X_{\al,\un{n}}\}_{\un{n}\in\bZ_+^d}$ then carries the structure of a proper product system over $\bZ_+^d$ with coefficients in $A$, where the multiplication maps are given by
\[
X_{\al,\un{n}}\otimes_A X_{\al,\un{m}}\to X_{\al,\un{n}+\un{m}}; \xi_\un{n}\otimes\xi_\un{m}\mapsto\al_\un{m}(\xi_\un{n})\xi_\un{m}\foral\xi_\un{n}\in X_{\al,\un{n}}, \xi_\un{m}\in X_{\al,\un{m}} \; \text{and} \; \un{n},\un{m}\in\bZ_+^d.
\]
C*-dynamical systems can be studied through the lens of the associated product systems, with concepts from the theory of the former having analogues in the theory of the latter (and vice versa).
Indeed, fixing $\un{n}\in\bZ_+^d$ and an ideal $I\subseteq A$, we have that
\begin{equation}\label{Eq:kerpredynsys}
\ker\phi_\un{n}=\ker\al_\un{n}\qand X_{\al,\un{n}}^{-1}(I)=\al_\un{n}^{-1}(I).
\end{equation}
Applying (\ref{Eq:kerpredynsys}) to recast the definitions of $\J$ and $\I$ in the language of C*-dynamical systems, we obtain that
\begin{equation}\label{Eq:dynsysJandI}
\J_F=(\bigcap_{i\in F}\ker\al_\un{i})^\perp 
\qand 
\I_F=\bigcap_{\un{n}\perp F}\al_\un{n}^{-1}((\bigcap_{i\in F}\ker\al_\un{i})^\perp)\foral\mt\neq F\subseteq[d].
\end{equation}

The following two remarks address points raised in Subsections \ref{Ss:T fam} and \ref{Ss:NTimpliesT}, respectively.

\begin{remark}\label{R:GIUTdiff}
Let $X$ be a proper product system over $\bZ_+^d$ with coefficients in a C*-algebra $A$.
Here we will show that the set of T-families of $X$ and the set of $2^d$-tuples $\L$ of $X$ that satisfy $\L\subseteq\I$ are not comparable, in general.

Firstly, let $B$ be a non-zero C*-algebra and let $A=B\oplus\bC$ be its unitisation.
We define a semigroup action $\al\colon\bZ_+^2\to\text{End}(A)$ by
\[
\al_{(m,n)}(b,\la)=\begin{cases} (0,\la) & \text{if} \; n\geq 1, \\ (b,\la) & \text{otherwise}, \end{cases}
\]
for all $(b,\la)\in A$.
In turn, the triple $(A,\al,\bZ_+^2)$ constitutes a C*-dynamical system and thus we obtain a proper product system $X_\al$ over $\bZ_+^2$ with coefficients in $A$.
Using (\ref{Eq:dynsysJandI}), it is routine to check that the $2^2$-tuple $\I$ decomposes as follows:
\begin{center}
\begin{tikzcd}
\I_{\{2\}}\arrow[dash]{r}\arrow[dash]{d} & \I_{\{1,2\}}=A\arrow[dash]{d} \\
\I_\mt=\{0\}\arrow[dash]{r} & \I_{\{1\}}=A.
\end{tikzcd}
\end{center}
We define a $2^2$-tuple $\L$ of $X_\al$ by
\begin{center}
\begin{tikzcd}
\L_{\{2\}}=\{0\} \arrow[dash]{r}\arrow[dash]{d} & \L_{\{1,2\}}=B\oplus\{0\}\arrow[dash]{d} \\
\L_\mt=\{0\}\arrow[dash]{r} & \L_{\{1\}}=\{0\}.
\end{tikzcd}
\end{center}
Observe that $\L\subseteq\I$.
However, \cite[Example 3.1.12]{DeK24} illustrates that $\L$ is not maximal and therefore not an NT-$2^2$-tuple by Proposition \ref{P:maxNT}.
In turn, the $2^2$-tuple $\L$ is not a T-family by Theorem \ref{T:NT=T}.

Next, let $X$ be a regular product system over $\bZ_+^d$ with coefficients in a non-zero simple C*-algebra $A$.
Then the family $\{\L_F\}_{F\subseteq[d]}$ where $\L_F=A$ for all $F\subseteq[d]$ is an NT-$2^d$-tuple of $X$ by \cite[Proposition 5.2.5]{DeK24}.
Hence it is a T-family by Theorem \ref{T:NT=T}.
However, this $2^d$-tuple is not contained in $\I$ (because $A\not\subseteq\I_\mt=\{0\}$).
\end{remark}

\begin{remark}\label{R:Jnotinv}
Let $X$ be a proper product system over $\bZ_+^d$ with coefficients in a C*-algebra $A$.
Here we will show that the $2^d$-tuple $\J$ of $X$ is not a T-family, in general.
This follows because the reverse inclusion in the statement of Proposition \ref{P:Jpassdown} may not hold, since $\J$ may not be invariant and so $\J_F\not\subseteq X_\un{i}^{-1}(\J_F)$.
Let us provide a counterexample to this effect.

Let $B$ be a non-zero C*-algebra and set $A=B\oplus B$.
We define a $*$-endomorphism $\al$ via
\[
\al\colon A\to A; (b,b')\mapsto (0,b)\foral(b,b')\in A.
\]
Note that $\ker\al=\{0\}\oplus B$.
By setting $\al_{(1,0)}:=\al$ and $\al_{(0,1)}:=\al$, we obtain a unital semigroup homomorphism $\bZ_+^2\to\text{End}(A)$ which we also denote by $\al$.
Thus $(A,\al,\bZ_+^2)$ is a C*-dynamical system and so induces a proper product system $X_\al$.
We have that
\[
\J_{\{1\}}=(\ker\al_{(1,0)})^\perp=B\oplus\{0\},
\]
using (\ref{Eq:dynsysJandI}) in the first equality.
Fixing $b\in B\setminus\{0\}$, we have that $(b,0)\in\J_{\{1\}}$ but also that
\[
\al_{(0,1)}(b,0)=(0,b)\not\in\J_{\{1\}}.
\]
Hence $\J_{\{1\}}\not\subseteq X_{\al,(0,1)}^{-1}(\J_{\{1\}})$, recalling that $X_{\al,(0,1)}^{-1}(\J_{\{1\}})=\al_{(0,1)}^{-1}(\J_{\{1\}})$ by (\ref{Eq:kerpredynsys}).
\end{remark}

The following result simplifies \cite[Corollary 5.3.5]{DeK24}.

\begin{corollary}\label{C:dynsysinterp}
Let $(A,\al,\bZ_+^d)$ be a C*-dynamical system and let $\K$ and $\L$ be $2^d$-tuples of $X_\al$.
Then $\L$ is a $\K$-relative O-family of $X_\al$ if and only if the following three conditions hold:
\begin{enumerate}
\item $\L$ consists of ideals,
\item $\L_F=\al_\un{i}^{-1}(\L_F)\cap\L_{F\cup\{i\}}\foral F\subsetneq[d] \; \text{and} \; i\in[d]\setminus F$, and
\item $\K\subseteq\L$.
\end{enumerate}
\end{corollary}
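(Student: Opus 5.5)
This is a direct translation of Definition \ref{D:relOfam} into the language of $\al$. By that definition, $\L$ is a $\K$-relative O-family of $X_\al$ precisely when $\L$ is a T-family of $X_\al$ and $\K\subseteq\L$. Condition (iii) is exactly $\K\subseteq\L$. So it remains to check that being a T-family (Definition \ref{D:T+O}) is equivalent to conditions (i) and (ii).

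By Definition \ref{D:T+O}, a T-family is a $2^d$-tuple that consists of ideals, which is condition (i), and that satisfies (\ref{Eq:Tfam}). Since $X_\al$ is proper, Definition \ref{D:T+O} applies without further hypotheses. The key step is to rewrite $X_{\al,\un{i}}^{-1}(\L_F)$ using (\ref{Eq:kerpredynsys}). That identity says $X_{\al,\un{n}}^{-1}(I)=\al_\un{n}^{-1}(I)$ whenever $I$ is an ideal of $A$. Under (i), each $\L_F$ is an ideal, so we may take $I=\L_F$ and $\un{n}=\un{i}$. This gives
\[
X_{\al,\un{i}}^{-1}(\L_F)\cap\L_{F\cup\{i\}}=\al_\un{i}^{-1}(\L_F)\cap\L_{F\cup\{i\}}\foral F\subsetneq[d] \; \text{and} \; i\in[d]\setminus F.
\]
Hence, in the presence of (i), equation (\ref{Eq:Tfam}) for $X_\al$ is literally condition (ii).

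It follows that (i), (ii) and (iii) together hold if and only if $\L$ is a T-family of $X_\al$ with $\K\subseteq\L$, which is what the corollary asserts. The only point needing care is the order of the argument. Identity (\ref{Eq:kerpredynsys}) is stated for ideals, so the translation of (\ref{Eq:Tfam}) into (ii) must be carried out after (i) is in place. This holds in both directions: a T-family consists of ideals by definition, and (i) is assumed explicitly in the converse. I do not expect any real obstacle. If one wanted to avoid relying on (\ref{Eq:kerpredynsys}), the identity $\sca{X_{\al,\un{i}},aX_{\al,\un{i}}}\subseteq I\iff\al_\un{i}(a)\in I$ can be checked directly from $X_{\al,\un{i}}=[\al_\un{i}(A)A]$ using an approximate unit, but it is already recorded in the paper.
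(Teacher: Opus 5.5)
Your proposal is correct and follows the paper's proof: it unpacks Definition \ref{D:relOfam} and uses (\ref{Eq:kerpredynsys}) to rewrite $X_{\al,\un{i}}^{-1}(\L_F)$ as $\al_\un{i}^{-1}(\L_F)$. Your point that condition (i) must be in place before applying that identity to $\L_F$ is sound, and the paper leaves it implicit.
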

\begin{proof}
The result follows immediately by translating Definition \ref{D:relOfam} into the language of C*-dynamical systems using (\ref{Eq:kerpredynsys}).
\end{proof}

\subsection{Higher-rank graphs}\label{Ss:hrgraph}

Finally, we will interpret the parametrising objects of Theorem \ref{T:NOparamprop} in the setting of row-finite higher-rank graphs.
We present the minimum amount of theory that will be needed; the reader is directed to \cite{De24, RS05, RSY03, RSY04, Sim06} for further details.
In particular, full proofs of the assertions featuring in this subsection can be found in \cite[Section 5.4]{De24}.
For the remainder of the discussion, we will reserve $d$ for the degree map of a graph $(\La,d)$ of rank $k$.

Fix $k\in\bN$.
A \emph{$k$-graph} $(\La,d)$ consists of a countable small category $\La=(\text{Obj}(\La),\text{Mor}(\La),r,s)$ together with a functor $d\colon\La\to\bZ_+^k$, called the \emph{degree map}, satisfying the \emph{factorisation property}: 
\begin{quote}
For all $\la\in\text{Mor}(\La)$ and $\un{m},\un{n}\in\bZ_+^k$ such that $d(\la)=\un{m}+\un{n}$, there exist unique $\mu,\nu\in\text{Mor}(\La)$ such that $d(\mu)=\un{m}, d(\nu)=\un{n}$ and $\la=\mu\nu$.
\end{quote}
Here we view $\bZ_+^k$ as a category consisting of a single object, and whose morphisms are exactly the elements of $\bZ_+^k$ (when viewed as a set).
Composition in this category is given by entrywise addition, and the identity morphism is $\un{0}$.
Therefore, $d$ being a functor means that 
\[
d(\la\mu)=d(\la)+d(\mu)\; \text{and} \; d(\id_v)=\un{0} \foral\la,\mu\in\text{Mor}(\La) \; \text{satisfying} \; r(\mu)=s(\la) \; \text{and} \; v\in\text{Obj}(\La).
\]
We view $k$-graphs as generalised graphs, and therefore refer to the elements of $\text{Obj}(\La)$ as \emph{vertices} and the elements of $\text{Mor}(\La)$ as \emph{paths}.
Fixing $\la\in\text{Mor}(\La)$, the factorisation property guarantees that $d(\la)=\un{0}$ if and only if $\la=\id_{s(\la)}$.
Hence we may identify $\text{Obj}(\La)$ with the set
\[
\{\la\in\text{Mor}(\La)\mid d(\la)=\un{0}\}, 
\]
and consequently we may write $\la\in\La$ instead of $\la\in\text{Mor}(\La)$ without any ambiguity.

Fix a $k$-graph $(\La,d)$.
Given $\la\in\La$ and $E\subseteq\La$, we define
\[
\la E:=\{\la\mu\in\La\mid \mu\in E, r(\mu)=s(\la)\}\qand E\la:=\{\mu\la\in\La\mid \mu\in E, r(\la)=s(\mu)\}.
\]
In particular, we may replace $\la$ by a vertex $v\in\La$ and write
\[
vE:=\{\la\in E\mid r(\la)=v\}\qand Ev:=\{\la\in E\mid s(\la)=v\}.
\]
Fixing $\un{n}\in\bZ_+^k$, we set
\begin{align*}
\La^\un{n} :=\{\la\in\La\mid d(\la)=\un{n}\}.
\end{align*}
We say that $(\La,d)$ is \emph{row-finite} if $|v\La^\un{n}|<\infty$ for all $v\in\La^\un{0}$ and $\un{n}\in\bZ_+^k$.

Every $k$-graph $(\La,d)$ is canonically associated with a product system $X(\La) := \{X_\un{n}(\La)\}_{\un{n}\in\bZ_+^k}$ over $\bZ_+^k$ with coefficients in the C*-algebra $c_0(\La^\un{0})$, where we view $\La^\un{0}$ as a discrete space.
Firstly, set $X_\un{0}(\La):=c_0(\La^\un{0})$, which we view as a C*-correspondence over itself in the usual way.
For each $v\in\La^\un{0}$, we write $\delta_v\in c_0(\La^\un{0})$ for the projection on $\{v\}$.
For every $\un{0} \neq \un{n} \in \bZ_+^k$, we consider the linear space $c_{00}(\La^\un{n})$ and write $\de_\la$ for its generators.
A right pre-Hilbert $c_0(\La^\un{0})$-module structure on $c_{00}(\La^\un{n})$ is given by
\[
\sca{\xi_\un{n}, \eta_\un{n}}(v) := \sum_{s(\lambda) = v}\overline{\xi_\un{n}(\lambda)}\eta_\un{n}(\lambda) \qand (\xi_\un{n}a)(\lambda) :=\xi_\un{n}(\lambda)a(s(\lambda)),
\]
for all $\xi_\un{n},\eta_\un{n}\in c_{00}(\La^\un{n}), a\in c_0(\La^\un{0}), v\in\La^\un{0}$ and $\la\in\La^\un{n}$.
We write $X_{\un{n}}(\La)$ for the right Hilbert C*-module completion of $c_{00}(\La^{\un{n}})$.
A left action $\phi_\un{n}$ of $c_0(\La^\un{0})$ on $X_\un{n}(\La)$ is induced by
\[
\phi_\un{n}(a)\colon c_{00}(\La^\un{n})\to c_{00}(\La^\un{n}); (\phi_\un{n}(a)\xi_\un{n})(\la)=a(r(\la))\xi_\un{n}(\la)\foral a\in c_0(\La^\un{0}), \xi_\un{n}\in c_{00}(\La^\un{n}), \la\in \La^\un{n},
\]
thereby imposing a C*-correspondence structure on $X_\un{n}(\La)$.
Fixing $\un{n},\un{m}\in\bZ_+^k$, we define a multiplication map $u_{\un{n},\un{m}}$ by
\[
u_{\un{n},\un{m}}\colon X_\un{n}(\La)\otimes_{c_0(\La^\un{0})}X_\un{m}(\La)\to X_{\un{n}+\un{m}}(\La); u_{\un{n},\un{m}}(\delta_\la\otimes\delta_\mu)=\begin{cases} \delta_{\la\mu} & \text{if} \; r(\mu)=s(\la), \\ 0 & \text{otherwise,} \end{cases}
\]
for all $\la\in\La^\un{n}$ and $\mu\in\La^\un{m}$, rendering $X(\La)$ a product system over $\bZ_+^k$ with coefficients in $c_0(\La^\un{0})$.
The structure of $(\La,d)$ can be studied via $X(\La)$ and vice versa.
We will not dwell on this point, instead contenting ourselves with noting that $(\La,d)$ is row-finite if and only if $X(\La)$ is proper (see, e.g., \cite[Proposition 5.4.5]{De24}).

We will use the duality between ideals of $c_0(\La^{\un{0}})$ and subsets of $\La^{\un{0}}$ given by the mutually inverse mappings
\begin{align*}
I & \mapsto H_I:=\{v\in\La^\un{0}\mid \delta_v\in I\}, \; \text{for all ideals} \; I\subseteq c_0(\La^\un{0}); \\
H & \mapsto I_H:=\ol{\spn}\{\delta_v\mid v\in H\}, \foral H \subseteq \La^\un{0}.
\end{align*}
Note that this duality implements a lattice isomorphism, and that $I_\mt=\{0\}$ and $I_{\La^\un{0}}=c_0(\La^\un{0})$.

Let $\L$ be a $2^k$-tuple of $X(\La)$ that consists of ideals.
For notational convenience, we set $H_{\L,F}:=H_{\L_F}$ for all $F\subseteq[k]$ and $H_\L:=\{H_{\L,F}\}_{F\subseteq[k]}$.
For an ideal $I \subseteq c_0(\La^\un{0})$, we have that
\begin{equation} \label{eq:n la}
H_{X_{\un{n}}(\La)^{-1}(I)}=\{v\in\La^\un{0}\mid s(v\La^\un{n})\subseteq H_I \}
\foral \un{n} \in \bZ_+^k.
\end{equation}

The following result simplifies the row-finite case of \cite[Corollary 5.4.14]{DeK24}.

\begin{corollary}\label{C:kgraphinterp}
Let $(\La,d)$ be a row-finite $k$-graph.
Let $\K$ and $\L$ be $2^k$-tuples of $X(\La)$ and suppose that $\K$ consists of ideals.
Then $\L$ is a $\K$-relative O-family of $X(\La)$ if and only if the following three conditions hold:
\begin{enumerate}
\item $\L$ consists of ideals,
\item $H_{\L,F}=\{v\in\La^\un{0}\mid s(v\La^\un{i})\subseteq H_{\L,F}\}\cap H_{\L,F\cup\{i\}}$ for all $F\subsetneq[k] \; \text{and} \; i\in[k]\setminus F$, and
\item $H_{\K,F}\subseteq H_{\L,F}$ for all $F\subseteq[k]$.
\end{enumerate}
\end{corollary}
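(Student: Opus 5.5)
The plan is to translate Definition \ref{D:relOfam} into the language of vertex sets through the lattice isomorphism $I\mapsto H_I$ between ideals of $c_0(\La^\un{0})$ and subsets of $\La^\un{0}$, in the same way Corollary \ref{C:dynsysinterp} was obtained. First we observe that $X(\La)$ is proper, because $(\La,d)$ is row-finite. Hence Definition \ref{D:relOfam} applies, and $\L$ is a $\K$-relative O-family of $X(\La)$ exactly when three things hold: $\L$ consists of ideals; $\L$ satisfies (\ref{Eq:Tfam}); and $\K\subseteq\L$. Condition (i) of the statement is literally the first of these, so it remains to match (\ref{Eq:Tfam}) with (ii) and $\K\subseteq\L$ with (iii), in both cases assuming $\L$ consists of ideals.

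For (ii), we fix $F\subsetneq[k]$ and $i\in[k]\setminus F$. Since $I\mapsto H_I$ is injective on ideals, the ideal equality $\L_F=X_\un{i}(\La)^{-1}(\L_F)\cap\L_{F\cup\{i\}}$ holds if and only if the corresponding vertex sets are equal. We then use two facts. First, $H$ converts intersections of ideals into intersections of sets; this holds because $H_{I\cap J}=\{v\mid \de_v\in I\cap J\}=H_I\cap H_J$. Second, by (\ref{eq:n la}) with $\un{n}=\un{i}$ and $I=\L_F$, we have $H_{X_\un{i}(\La)^{-1}(\L_F)}=\{v\in\La^\un{0}\mid s(v\La^\un{i})\subseteq H_{\L,F}\}$. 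Applying $H$ to both sides of (\ref{Eq:Tfam}) therefore yields exactly (ii), and the translation runs in both directions.

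For (iii), both $\K$ and $\L$ consist of ideals: $\K$ by hypothesis, and $\L$ by (i). The duality is an order isomorphism, so $\K_F\subseteq\L_F$ holds if and only if $H_{\K,F}\subseteq H_{\L,F}$, for every $F\subseteq[k]$. This is why the statement needs the hypothesis that $\K$ consists of ideals.

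I expect no real obstacle here. The only points needing care are that the ideal/vertex-set correspondence is genuinely bijective and preserves intersections, which is what lets an equality of ideals be replaced by an equality of vertex sets, and that (\ref{eq:n la}) is invoked only at generators $\un{i}$. If intersection-preservation were in doubt, we would verify it directly by noting that an ideal of $c_0(\La^\un{0})$ is the closed span of the $\de_v$ it contains.
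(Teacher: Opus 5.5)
Your proposal is correct and follows the paper's own route. The paper likewise translates Definition \ref{D:relOfam} into vertex-set language via the lattice isomorphism between ideals of $c_0(\La^\un{0})$ and subsets of $\La^\un{0}$, together with (\ref{eq:n la}). You have simply made explicit the steps it leaves implicit: properness of $X(\La)$, preservation of intersections, injectivity of the duality, and the role of the hypothesis that $\K$ consists of ideals.
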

\begin{proof}
The result follows immediately by translating Definition \ref{D:relOfam} into the language of higher-rank graphs.
This is accomplished by using the duality between ideals of $c_0(\La^\un{0})$ and subsets of $\La^\un{0}$, together with (\ref{eq:n la}).
\end{proof}

Corollary \ref{C:kgraphinterp}, employed in tandem with Theorem \ref{T:NOparamprop}, aligns with the first part of \cite[Theorem 5.5]{BB24}.
This can be seen by taking $\K=\{\{0\}\}_{F\subseteq[k]}$ for the T-family case and $\K=\I$ for the O-family case.
We need to stipulate that $\K$ consists of ideals in the statement of Corollary \ref{C:kgraphinterp} in order to exploit the duality between ideals of $c_0(\La^\un{0})$ and subsets of $\La^\un{0}$.
This is sufficient, since for a general $2^k$-tuple $\K$ of $X(\La)$, we have that $\N\O(\K,X(\La))=\N\O(\sca{\K},X(\La))$ by the comments preceding Definition \ref{D:LCNP}.


\end{document}